\newtheorem{thm}{Theorem}
\newtheorem{fact}{Fact}
\newtheorem{lemma}{Lemma}
\newtheorem{definition}{Definition}
\newtheorem{corollary}{Corollary}
\newtheorem{proposition}{Proposition}
\newtheorem{construction}{Construction}
\newtheorem{remark}{Remark}
\newtheorem{hypothesis}{Hypothesis}
\newcommand{\term}[1]{\textsl{#1}\xspace}
\newcommand{\reflem}[1]{Lemma~\ref{lem:#1}\xspace}
\newcommand{\refthm}[1]{Theorem~\ref{thm:#1}\xspace}
\newcommand{\refdef}[1]{Definition~\ref{def:#1}\xspace}
\newcommand{\refcor}[1]{Corollary~\ref{cor:#1}\xspace}
\newcommand{\reffact}[1]{Fact~\ref{fact:#1}\xspace}
\newcommand{\refsec}[1]{Section~\ref{sec:#1}\xspace}
\newcommand{\reffig}[1]{Figure~\ref{fig:#1}\xspace}
\newcommand{\refeq}[1]{Eqn.~\eqref{eq:#1}\xspace}
\newcommand{\ga}{grid agent\xspace}
\newcommand{\transpose}{\ensuremath{T}\xspace}
\newcommand{\reals}{\ensuremath{\mathbb{R}}\xspace}
\newcommand{\powerset}[1]{\ensuremath{\mathcal{P}(#1)}\xspace}
\newcommand{\natnum}{\ensuremath{\mathbb{N}}\xspace}
\newcommand{\posint}{\ensuremath{\mathbb{N}_{>0}}\xspace}
\newcommand{\mcal}[1]{\ensuremath{\mathcal{#1}}\xspace}
\newcommand{\cl}{\textsc{commelec}\xspace}
\newcommand{\pqprof}{$PQ$ profile\xspace}
\newcommand{\belieffunc}{belief function\xspace}
\newcommand{\niek}[1]{\textsf{Niek: #1}\xspace}
\newcommand{\Preq}{\ensuremath{P^\textnormal{req}}\xspace}
\newcommand{\Pimp}{\ensuremath{P^\textnormal{imp}}\xspace}
\newcommand{\map}[1]{#1-approximation\xspace}
\renewcommand{\l}{\left}
\renewcommand{\r}{\right}
\newcommand{\conv}{\operatorname{conv}}
\newcommand{\temph}[1]{{\sl #1}}
\newcommand{\diam}{\operatorname{diam}}
\newcommand{\spn}{\operatorname{span}}
\DeclareSymbolFont{AMSb}{U}{msb}{m}{n}
\DeclareMathSymbol{\N}{\mathbin}{AMSb}{"4E}
\DeclareMathSymbol{\ZZ}{\mathbin}{AMSb}{"5A}
\DeclareMathSymbol{\Y}{\mathbin}{AMSb}{"59}
\DeclareMathSymbol{\U}{\mathbin}{AMSb}{"55}
\DeclareMathSymbol{\RR}{\mathbin}{AMSb}{"52}
\DeclareMathSymbol{\Q}{\mathbin}{AMSb}{"51}
\DeclareMathSymbol{\Prob}{\mathbin}{AMSb}{"50}
\DeclareMathSymbol{\HHH}{\mathbin}{AMSb}{"48}
\DeclareMathSymbol{\I}{\mathbin}{AMSb}{"49}
\DeclareMathSymbol{\C}{\mathbin}{AMSb}{"43}
\DeclareMathSymbol{\E}{\mathbin}{AMSb}{"45}
\DeclareMathSymbol{\A}{\mathbin}{AMSb}{"41}
\DeclareMathSymbol{\C}{\mathbin}{AMSb}{"43}
\DeclareMathSymbol{\D}{\mathbin}{AMSb}{"44}
\DeclareMathSymbol{\OO}{\mathbin}{AMSb}{"4F}
\DeclareMathSymbol{\X}{\mathbin}{AMSb}{"58}
\DeclareMathSymbol{\LL}{\mathbin}{AMSb}{"4C}
\newcommand{\Il}{\mathcal{I}}
\newcommand{\Al}{\mathcal{A}}
\newcommand{\Bl}{\mathcal{B}}
\newcommand{\Ul}{\mathcal{U}}
\newcommand{\Dl}{\mathcal{D}}
\newcommand{\HH}{\mathcal{H}}
\newcommand{\defas}{:=} 
\begin{document}
%
\title{Design of Resource Agents with Guaranteed Tracking Properties for Real-Time Control of Electrical Grids}

%
%
%

\author{
  Andrey~Bernstein, 
  Niek~J.~Bouman, 
and Jean-Yves~Le~Boudec\\
EPFL, Switzerland}
\maketitle

\begin{abstract}
  We focus on the problem of controlling electrical microgrids with little inertia in real time. We consider a centralized controller and a number of resources, where each resource is either a load, a generator, or a combination thereof, like a battery. The centralized controller periodically computes new power setpoints for the resources  based on the estimated state of the grid and an overall objective, and subject to safety constraints. Each resource is augmented with a \emph{resource agent} that a) implements the power-setpoint requests sent by the controller on the resource, and b) translates device-specific information about the resource into an abstract, device-independent representation and transmits this to the controller. 

We focus on the resource agents (including the resources that they manage) and their impact on the overall system's behavior. Intuitively, for the system to converge to the overall objective, the resource agents should be \emph{obedient} to the requests from the controller, in the sense that the actually implemented setpoint should be close to the requested setpoint, at least on average. This can be important especially when a controller that performs \emph{continuous} optimization is used (because of performance constraints) to control \emph{discrete} resources (for which the set of implementable setpoints is discrete).

In this work, we formalize this type of obedience by defining the notion of \emph{$c$-bounded accumulated-error} for some constant $c$. We then demonstrate the usefulness of our notion, by presenting theoretical results (for a simplified scenario) as well as some simulation results (for a more realistic setting) that indicate that, if all resource agents in the system have \emph{$c$-bounded accumulated-error}, the closed-loop system converges on average to the objective.
Finally, we show how to design a resource agent that provably has $c$-bounded accumulated-error for various types of resources, such as resources with uncertainty (e.g., PV panels) and resources with a discrete set of implementable setpoints (e.g., heating systems with heaters that each can either be switched on or off).

\end{abstract}


%

\section{Introduction}
\IEEEPARstart{W}{e} consider the problem of controlling a collection of electrical resources that are interconnected via an electrical grid, with respect to a (typically time-varying) objective and under certain safety constraints. These resources can be generators, loads, as well as resources that can both inject and consume power, like storage units. This problem has recently received renewed interest through the advent of renewable energy like solar power and improved battery technologies.

We focus on \emph{real-time} control, namely on sub-second time scale.
The classical approach involves a combination of both
frequency and voltage regulation using \emph{droop controllers}. With the increased penetration
of stochastic resources, distributed generation and demand response, this approach shows
severe limitations in both the optimal and feasible operation of medium and low voltage networks, as well as
in the aggregation of the network resources for upper-layer power systems. An alternative
approach is to directly control the targeted grid by defining explicit and real-time setpoints
for active/reactive power absorptions/injections defined by a solution of a specific
optimization problem. Such an approach typically assumes a bi-directional communication between the \emph{grid controller} and the different resources in the network. The communication capability enables the grid controller to have fine-grained knowledge of the system's state, which allows for a better operation of the system.

One of the major challenges in this context is
to be able to efficiently control \emph{heterogeneous resources} in real-time. The resources can have continuous or discrete nature
(e.g., heating systems consisting of a finite number of heaters that each can either be switched on or off) and/or can be highly uncertain (e.g., PV panels or residential loads). Hence, a naive approach would lead to a \emph{stochastic mixed-integer} optimization problem to be solved at the grid controller at each time step. Since the goal is real-time control, this approach is practically infeasible.

The recently introduced \cl framework for real-time control of electrical grids \cite{commelec, commelec2} inherently avoids
performing stochastic mixed-integer optimization.
The framework uses a hierarchical system of \emph{software agents}, each responsible for a single resource (loads, generators and storage devices; Resource Agents - RA) or an entire subsystem (including a grid and/or a number of resources; Grid Agents - GA).
Each resource agent advertises to its grid agent its internal state via a \emph{device-independent} protocol. In particular, the protocol requires from every RA to advertise (i) a \emph{convex} set of feasible setpoints, and (ii) an uncertainty (belief) set in setpoint implementation.
This way, the grid agent can solve a \emph{robust continuous} (rather than stochastic mixed-integer) optimization problem and send continuous setpoints to the resource agents. It is then the task of the RA to map the received setpoint to the set of actually implementable setpoints at this moment.

In this paper, we address the following two questions that arise during RA design in this context:
\begin{enumerate}
\item[(i)] How should an RA advertise the flexibility and uncertainty of the resource to a GA that works with continuous setpoints?
\item[(ii)] Given a requested setpoint from the GA (which is not necessarily implementable), which setpoint should the RA implement?
\end{enumerate}
It is important to note that the RA design can have a significant impact on the performance of the overall system.
Indeed, as it is shown in \refsec{num}, a straightforward approach in which the requested setpoint is merely projected to the set of implementable setpoints can lead to highly sub-optimal behaviour.

The key point is that the RAs should be \emph{obedient} to the GA's requests in some sense. As the first main contribution of this paper, we propose \emph{the boundedness of the accumulated error between requests and implemented setpoints} as a desired metric for obedience.
One the main motivations of this choice is that the average requested and implemented setpoints are the same in the long-run. This has an energy interpretation: the produced/consumed energy converges to the requested one. The latter can be useful, e.g., in \emph{virtual power plant}-related applications.
Further, we perform a theoretical analysis of the closed-loop system that includes a GA and a number of RAs, where each RA has bounded accumulated-error. The analysis is performed in a restricted scenario, under some simplifying assumptions. To complement this, we illustrate the performance of our method in simulation using a more realistic scenario.

The second main contribution of the paper is a framework for RA design that guarantees boundedness of the accumulated error, which covers a broad range of resources.
Our approach is conceptually simple and is inspired by a generic error-feedback technique that is known under different names in different fields;
in image processing, it is called \emph{error diffusion} (and a specific variant is called \emph{Floyd--Steinberg dithering} \cite{floyd75}), whereas in signal processing, the technique is related to sigma-delta modulation \cite{Anastassiou} and some stability properties of this technique have been analyzed in that context \cite{gray,adams}.

We briefly outline the key ideas below. Consider a sequence of requested setpoints $(x_k)$. Assume that the resource agent cannot implement $x_k$ precisely, but rather can implement any setpoint $y_k$ in a given set $\Il_k$ (not necessarily convex, possibly discrete). Let $e_k = \sum_{i = 1}^k (y_i - x_i)$ be the accumulated error. Then, roughly speaking, we answer the two questions posed above as follows. First, advertise the \emph{convex hull} of $\Il_k$ as the set of feasible setpoints. Second, when asked to implement $x_k$, actually implement $y_k = \mathrm{proj}_{\Il_k} (x_k - e_{k-1})$, where $\mathrm{proj}_{\Il_k} (\cdot)$ is a projection operator. Under certain conditions (detailed in \refsec{drr}), this method ensures boundedness of $e_k$.

Our method can also be interpreted from the perspective of PI control.
Indeed, it can be viewed as an ``I-controller'' in the following sense. Observe that given the previous accumulated error  $e_{k-1}$ and request $x_k$, we have that $y_k$ chosen above minimizes the absolute value of the new accumulated error $| e_k |  = | e_{k-1} + (y_k - x_k) | = | y_k - (x_k - e_{k-1}) |  $ over all $y_k$ in $\Il_k$.


Our results hold universally, for any sequence of requested setpoints. This allows for a separation of concerns: the designer of an RA does not depend on a particular optimization algorithm applied in the GA.

The structure of the paper is as follows. In \refsec{prelim}, we introduce notation and briefly overview the \cl framework. In \refsec{perfmetric}, we introduce the concept of the $c$-bounded accumulated-error and discuss its basic properties. In \refsec{ga_opt}, we study analytically the effect of having resource agents
with bounded accumulated-error in a system where those resource agents are controlled by a grid agent. Then, we devise a method for RA design for \emph{discrete} resource that guarantees boundedness of the accumulated error in \refsec{drr}, and for \emph{uncertain} resources in \refsec{uncertain}. In \refsec{gen}, we present a unified approach that allows to prove the results of Sections \ref{sec:drr} and \ref{sec:uncertain}, as well as to design RAs for different types of resources. In \refsec{num}, we present a numerical evaluation of the proposed methods. Finally, we close with concluding remarks in \refsec{conc}.

\section{Preliminaries}
\label{sec:prelim}
In this section, we introduce some notation and also briefly review the properties of the \cl framework that are used in the paper.

\subsection{Notation}
Throughout the paper, $\|\cdot\|$ denotes the $\ell_2$ norm. We use $\natnum$ and  $\posint$ to refer to the natural numbers including and excluding zero, respectively. For arbitrary $n\in \natnum_{>0}$, we write $[n]$ for the set $\{1,\ldots,n\}$. We use $\boldsymbol{0}$ to denote the zero vector $(0,\ldots,0)$, where the dimension should be clear from the context.
Let $d\in \natnum_{>0}$.
For an arbitrary set $\mcal{U}\in \reals^d$, $-\mcal{U}:=\{ -u \,|\, \forall u\in \mcal{U}\}$.
For arbitrary sets $\mcal{U},\mcal{V} \in \reals^d$,  $\mcal{U}+\mcal{V}$ represents the Minkowski sum of \mcal{U} and \mcal{V}, which is defined as
$\mcal{U}+\mcal{V} := \{u+v \,|\, \forall u \in \mcal{U}, \forall v\in \mcal{V} \}$.



  Fix $d\in \natnum_{>0}$.
  Let $\mcal{S} \subset \reals^d$ be an arbitrary non-empty closed set.
  Any mapping $\mathrm{proj}_{\mcal{S}}:\reals^d \rightarrow \mcal{S}$ that satisfies
\[
  \mathrm{proj}_\mcal{S}(x) =  s, \quad\text{where}\quad
s \in \arg \min_{\rho \in\mcal{S}} \|\rho-x \|.
\]
is called  a \term{projection operator} onto \mcal{S}.
%
%
%

A \term{power setpoint} is a tuple $u = (P,Q)\in \mathbb{R}^2$, where $P$ denotes real power and $Q$ denotes reactive power.
Let $\mathcal{B}(\mathbb{R}^2)$ denote the collection of all \emph{bounded} non-empty closed subsets of $\mathbb{R}^2$, and let
 $\mathcal{C}(\mathbb{R}^2)$ denote the collection of all \emph{convex} sets in $\mathcal{B}(\mathbb{R}^2)$.

 For vectors $v\in \reals^2$, as well as for sets $\mcal{V} \subseteq \reals^2$, we use superscripts $P$ and $Q$ to refer to the first and second coordinate, respectively. I.e., $v= (v^P,v^Q)$ and $\mcal{V} = \mcal{V}^P \times \mcal{V}^Q $ hold.

For any compact set \mcal{V}, we define the
\term{diameter of \mcal{V}} as
\[
\diam \mcal{V} := \max \{ \| v-w \| : v,w \in \mcal{V} \}.
\]

For any matrix $M$, we write $M \succeq 0$ to state that it is \emph{positive semi-definite}, and $M \succ 0$ to state that it is \emph{positive definite}. Also, we write $\l\| M \r\|$ to denote its \emph{induced $\ell_2$ matrix norm}.

\subsection{The COMMELEC Framework}

In the \cl framework \cite{commelec}, the RAs periodically advertise to the GA the set of (power) setpoints that they can currently implement, as well as, for each setpoint in this set, its ``cost'' (which exposes the individual objective of the resource) and the accuracy with which the setpoint can be implemented.
The GA collects those advertisements and uses them to periodically compute new setpoints, which are then sent back to the resources as requests, i.e., the RAs are supposed to implement these setpoints.
(Note that the setpoint-computation might depend on auxiliary inputs, such as the state of the electrical grid.)
Below, we present the advertised elements in more detail.

\begin{definition}
A \term{$PQ$ Profile} is a convex set $\mcal{A} \in \mathcal{C}(\mathbb{R}^2)$.
\label{def:pqprof}
\end{definition}
A \pqprof of a RA 
represents the collection of power setpoints that 
the RA is able to implement.
As explained in the Introduction, the convexity requirement on the \pqprof can be viewed as a limitation that originates from the control algorithm that is currently used in the \ga.\footnote{This particular control algorithm computes orthogonal projections onto the \pqprof, which are well-defined only if that set is convex.}

\begin{definition}
A \term{Belief Function} is a set-valued function
$$\text{BF}: \mathcal{A} \rightarrow \mathcal{B}(\mathbb{R}^2).$$ 
\end{definition}
For every setpoint $u \in \mcal{A}$, the belief function represents the uncertainty in this setpoint implementation: 
when the RA is requested to implement a setpoint $u\in \mcal{A}$, the RA 
states that the actually implemented setpoint (which could depend on external factors, for example on the weather in case of a PV)
lies in the set $\text{BF}(u)$.

\begin{definition}
A \term{Virtual Cost Function} is a continuously differentiable function $\text{CF}: \mcal{A} \rightarrow \mathbb{R}$.
\end{definition}

The virtual cost function represents the RA's aversion (corresponding to a high cost) or preference (respectively, low cost) towards a given setpoint. For example, a battery agent whose battery is fully charged will assign high 
cost to setpoints that correspond to further charging the battery.
The the adjective \emph{virtual} makes clear that we do not mean a monetary value, however, from now on we will omit this adjective and simply write ``cost function''.

As the objects defined above may change at every \cl cycle, we denote the cycle (or step) index by using subscript $k = 1, 2, \ldots$ where needed, e.g., $\text{CF}_k$, $\text{BF}_k$, etc.

At every time step $k \in \natnum_{>0}$, a resource agent receives from its grid agent a \emph{request} to implement a setpoint $u_k^\text{req} = (P_k^\text{req}, Q_k^\text{req}) \in \Al_k$. We denote
the setpoint that the resource \emph{actually implements}
by  $u_k^\text{imp} = (P_k^\text{imp}, Q_k^\text{imp}) \in \Il_k$. Here, $\Il_k \subseteq \reals^2$ denotes the set of \term{implementable setpoints} at time step $k$, namely the set describing the \term{feasibility constraints} of the resource.

Note that the implemented setpoint $u_k^\text{imp}$ need not be equal to the request $u_k^\text{req}$;
there is typically some error between them.
For example, there might be setpoints in the \pqprof that do not correspond to implementable setpoints (we will see examples of this in \refsec{drr}). Also, the setpoint that is actually implemented might depend on uncertain external factors (like the solar irradiance, in case of a PV).

\subsection{Policy for a Resource Agent}
It is convenient to formulate the decision process at a resource agent as a \emph{repeated game} between the RA and the ``environment'', which includes the effects of the decisions of the GA, Nature, and other external factors.
To that end, 
let
\begin{equation}
\HH_k = \l\{(\Al_{i}, CF_{i}, BF_i, u^{\textrm{imp}}_{i}), (u^{\textrm{req}}_{i}, \Il_{i}) \r\}_{i = 1}^k
\end{equation}
denote the \term{history} of the decision process up to (and including) time step $k$. Here, the first tuple represents an action of the RA, whereas the second tuple represents an action of the environment. A general \emph{policy} of the RA is the collection $\pi = \{\pi_k\}_{k=1}^{\infty}$ of probability distributions $\pi_k = (\pi_k^{\Al}, \pi_k^{CF}, \pi_k^{BF}, \pi_k^{\textrm{imp}})$, such that
\[
\Al_k \sim \pi_k^{\Al}\l(\cdot | \HH_{k-1} \r),\quad CF_k \sim \pi_k^{CF}\l(\cdot | \HH_{k-1} \r),
\]
\[
BF_k \sim \pi_k^{BF}\l(\cdot | \HH_{k-1} \r),\quad u^{\textrm{imp}}_k \sim \pi_k^{\textrm{imp}}\l(\cdot | \HH_{k} \r).
\]
Note that, in general, the first three elements are chosen according to the history up to, \emph{but not including}, time step $k$, while the implemented setpoint is chosen according to the history \emph{including} time step $k$.

In this context, we make the following important distinction between \emph{deterministic} and \emph{uncertain} resources.
\begin{definition} \label{def:res_classes}
We say that a resource is \term{deterministic} if, for any time step $k \in \posint$, there exists a deterministic function $G_k$ (that is known to the RA)  such that $\Il_{k} = G_k(\HH_{k-1})$.
Otherwise, we say that the resource is \term{uncertain}.
\end{definition}
In particular, an agent for a deterministic resource knows the set of implementable setpoints $\mcal{I}_k$ at the moment of advertising the \pqprof $\mcal{A}_k$, hence its policy may depend on it explicitly. On the other hand, an agent for an uncertain resource will have to \emph{predict} $\mcal{I}_k$ in order to advertise $\mcal{A}_k$.

\section{Bounded Accumulated-Error} 
\label{sec:perfmetric}

In this section, we introduce the concept of $c$-bounded accumulated-error and we moreover propose that bounded accumulated-error should be a required property of every resource agent design.
%
The motivation for our proposal 
is two-fold. First, if an RA has bounded accumulated-error, the average implemented setpoint converges to the average requested setpoint (see Proposition \ref{lem:track} below). Second, a resource that introduces a significant error when implementing a setpoint can lead to divergence and/or sub-optimal behaviour of the overall system. We show that, under the bounded accumulated-error property,
the effect on the setpoints computed by the grid agent is bounded and vanishes on average. We prove this analytically in a restricted scenario (Section \ref{sec:ga_opt})
and in simulation for a more realistic scenario (Section \ref{sec:num}).

Let
\begin{equation}
e_{ k}  = (e^{P}_{ k} , e^Q_{ k} ) \defas  \sum_{i = 1}^k \left(u^\text{imp}_i - u^\text{req}_i \right), \quad  k \in \natnum
\label{eq:accerror}
\end{equation}
denote the \emph{accumulated-error vector} at time $k$. We define the following performance metric in terms of $e_k$.
\begin{definition}
  \label{def:bounded_error}
Let $c \in \mathbb{R}$, $0 \leq c < \infty$, be given.
  We say that a resource agent has \emph{$c$-bounded accumulated-error} if 
\[
\| e_{ k} \| \leq c, \quad k \geq 1.
\]
\end{definition}
Finally, 
when we say that a resource agent has \emph{bounded accumulated-error} (without mentioning a constant), we mean that there exists some constant $c\geq 0$ 
such that the resource agent has $c$-bounded accumulated-error.


\subsection{Vanishing Average Tracking-Error}
It is easy to show that if a resource agent has bounded accumulated-error, the average implemented setpoint converges to the average requested setpoint. In fact, we next prove a stronger result that gives the rate of convergence when the average is taken over a window of a given size. To this end, for a given sequence $x = (x_k)$,  any $k = 1, 2, \ldots$, and $m \in \{0, \ldots, k-1\}$, let
\begin{equation}
\mu_{[k-m:k]}(x) \defas \frac{1}{m+1}\sum_{i=0}^m x_{k-i},
\end{equation}
denote the average of $x$ over the last $m+1$ elements. The \emph{average tracking error} between two sequences $x = (x_k)$ and $y = (y_k)$ is then defined as
\begin{equation} \label{eqn:track_err}
    \varepsilon^\textnormal{avg}_{[k-m:k]}(x,y)  \defas \Big\| \mu_{[k-m:k]}(x) - \mu_{[k-m:k]}(y)  \Big\|.
\end{equation}

\begin{proposition} \label{lem:track}
Let $\textsf{R}$ be a resource agent having $c$-bounded accumulated-error.
Then, for any $k = 1, 2, \ldots$ and $m \in \{0, \ldots, k-1\}$, the average tracking error of $\textsf{R}$ satisfies
\[
  \varepsilon^{\textnormal{avg}}_{[k-m:k]}( u^\text{req}, u^\text{imp} )
\leq \begin{cases}
  \phantom{2}(m+1)^{-1} c & \text{if $m=k-1$,}\\
 2 (m+1)^{-1} c & \text{otherwise.}\\
\end{cases}
\]
In particular, for the case $m = 0$, we obtain an upper bound on the instantaneous error
\[
\big\| u_k^\text{imp} - u_k^\text{req} \big\| \leq 2c.
\]
\end{proposition}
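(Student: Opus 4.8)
The plan is to rewrite the average tracking error directly in terms of the accumulated-error vectors $e_k$ defined in \eqref{eq:accerror}, and then apply the triangle inequality together with the $c$-bounded accumulated-error hypothesis.

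First I would observe that the difference of the two window-averages telescopes. For any $k \geq 1$ and any $m \in \{0,\dots,k-1\}$,
\[
\mu_{[k-m:k]}(u^\text{imp}) - \mu_{[k-m:k]}(u^\text{req})
= \frac{1}{m+1}\sum_{i=0}^m \bigl(u^\text{imp}_{k-i} - u^\text{req}_{k-i}\bigr)
= \frac{1}{m+1}\sum_{j=k-m}^k \bigl(u^\text{imp}_{j} - u^\text{req}_{j}\bigr)
= \frac{1}{m+1}\bigl(e_k - e_{k-m-1}\bigr),
\]
where $e_0 = \boldsymbol{0}$ by the convention that the empty sum is zero. Taking norms and using \eqref{eqn:track_err}, this yields the exact identity
\[
\varepsilon^\textnormal{avg}_{[k-m:k]}(u^\text{req},u^\text{imp}) = \frac{1}{m+1}\,\bigl\| e_k - e_{k-m-1} \bigr\|.
\]

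Next I would split into the two cases of the claimed bound. If $m = k-1$, then $k-m-1 = 0$, so $e_{k-m-1} = \boldsymbol{0}$ and the identity gives $\varepsilon^\textnormal{avg}_{[k-m:k]} = (m+1)^{-1}\|e_k\| \leq (m+1)^{-1}c$ by Definition \ref{def:bounded_error}. Otherwise $k-m-1 \geq 1$, so both $e_k$ and $e_{k-m-1}$ have norm at most $c$, and the triangle inequality gives $\|e_k - e_{k-m-1}\| \leq \|e_k\| + \|e_{k-m-1}\| \leq 2c$, hence $\varepsilon^\textnormal{avg}_{[k-m:k]} \leq 2(m+1)^{-1}c$. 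Finally, specializing to $m=0$: if $k=1$ we are in the first case and obtain the bound $c \leq 2c$, while if $k \geq 2$ we are in the second case and obtain $2c$; in either situation $\|u_k^\text{imp} - u_k^\text{req}\| \leq 2c$.

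The argument is essentially immediate once the telescoping identity is spotted; the only point requiring a little care is the boundary case $e_0 = \boldsymbol{0}$, which is precisely what allows the sharper constant $1$ (rather than $2$) in the regime $m = k-1$. I do not anticipate any genuine obstacle here.
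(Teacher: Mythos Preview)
Your proposal is correct and follows essentially the same route as the paper: rewrite the windowed average difference as $(m+1)^{-1}\|e_k - e_{k-m-1}\|$ via telescoping, then apply the triangle inequality and the bound $\|e_j\|\le c$, using $e_0=\boldsymbol{0}$ for the sharper constant when $m=k-1$. The paper's proof is identical in structure and level of detail.
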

\begin{proof}
\begin{align*}
  \varepsilon^\textnormal{avg}&_{[k-m:k]}(  u^\text{imp}, u^\text{req} ) =
  \Big\| \mu_{[k-m:k]}(u^\text{req}) - \mu_{[k-m:k]}(u^\text{imp})  \Big\|
  \\
  &= \frac{1}{m+1} \Big\|\sum_{i=0}^m \big(u_{k-i}^\textnormal{imp} -u_{k-i}^\textnormal{req}\big)\Big\|\\
  &=  \frac{1}{m+1} \Big\| \sum_{i=1}^k \big(u_{i}^\textnormal{imp}\!-\!u_{i}^\textnormal{req}\big)
-\hspace{-.6em}  \sum_{j=1}^{k-m-1} \hspace{-.7em}\big(u_{j}^\textnormal{imp}\!-\!u_{j}^\textnormal{req}\big)\Big\|\\
&= \frac{1}{m+1} \|e_k - e_{k-m-1} \|
\leq  \frac{1}{m+1} (\|e_k\| + \|e_{k-m-1} \|)
\end{align*}
We obtain the statement by using the bound on the error from \refdef{bounded_error} twice. For the case $m=k-1$, we use the fact that $e_0=(0,0)$ (by definition of $e_k$) which gives us the improved bound.
\end{proof}

\subsection{Example: Resource with Delay}
Here, we would like to give some simple examples of resources that have bounded accumulated-error; in Sections \ref{sec:drr} and \ref{sec:uncertain}, we study more elaborate examples. A trivial example is an \emph{ideal} device, for which $u^\text{imp}_k = u^\text{req}_k$ for every $k$, which immediately implies that $e_k = e_0$ for every $k$.

Another example of a resource with bounded accumulated error by construction is a device that has a \emph{delay} of $\tau \in \posint$ time steps when implementing a setpoint.
For simplicity, consider a resource with a fixed \emph{convex} set of implementable setpoints $\Il \subseteq \reals^2$. Namely, the set $\Il$ represents the feasibility constraint of the resource. When asked at time step $k$ to implement a setpoint $u^\text{req}_{k} \in \Il$, $u^\text{req}_{k} \neq u^\text{imp}_{k-1}$, it will only implement it after $\tau$ timesteps; $u^\text{imp}_{k + \tau} = u^\text{req}_{k}$.
During the ``transition period'', the implemented setpoint remains unchanged, i.e.,  $u^\text{imp}_{k + i} = u^\text{imp}_{k -1}$ for all $i \in \{0,\ldots, \tau - 1 \}$.
At timesteps $k +1$ up to and including $k +\tau - 1$, the \pqprof of the resource is a singleton set that corresponds to the \emph{implemented} setpoint at the previous timestep (i.e., equal to $\{u^\text{imp}_{k - 1}\}$);
while at timestep $k + \tau$, the \pqprof is set back to $\Il$. Note that
no error is accumulated at timesteps where the \pqprof is a singleton set. The idea is illustrated in Figure \ref{fig:delayed}.

\begin{figure}[h!]
\centering
\includegraphics[width=\columnwidth]{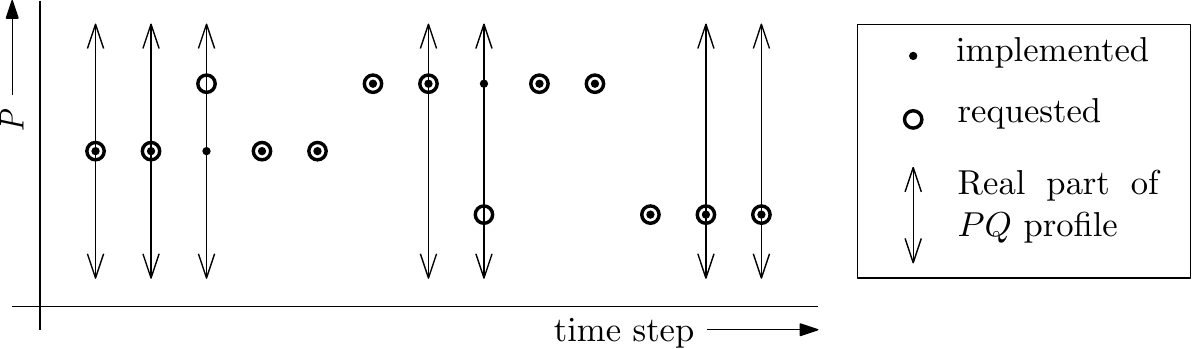}
\caption{Illustration of operation of a real-power resource with delay $\tau = 3$. \label{fig:delayed}}
\end{figure}

We show below that this resource agent has $c$-bounded accumulated error with $c = \diam \Il$.
Fix $k \in \posint$. Let $\{i_{\ell}\}_{\ell = 1}^L$, $1 < i_1 < i_2 < ... < i_L \leq k$,
be the set of time indices such that the request differs from the implemented setpoint, namely
\begin{align*}
u^{\textnormal{req}}_{i} &= u^{\textnormal{imp}}_{i}, \quad \text{for every } i\in [k] \setminus \{i_{\ell}\}_{\ell = 1}^L \\ 
u^{\textnormal{req}}_{i_{\ell}} & \neq u^{\textnormal{imp}}_{i_{\ell}}, \quad \text{for every } \ell\in [L].
\end{align*}
Observe that because of the delay, we have in particular that $u^{\textnormal{imp}}_{i_{\ell}} = u^{\textnormal{imp}}_{i_{\ell} -1}$. When the request differs from the implemented setpoint, the $PQ$ profile is ``locked'' on the singleton $\{u^{\textnormal{imp}}_{i_{\ell} - 1}\}$ during the transition period, and as a result $u^{\textnormal{req}}_{i_{\ell}+i} = u^{\textnormal{imp}}_{i_{\ell}+ i}$, $i = 1, ..., \tau$. At the end of the transition period, the implemented setpoint equals to the request right before the transition started, namely $u^{\textnormal{imp}}_{i_{\ell} + \tau} = u^{\textnormal{req}}_{i_{\ell}}$. Also, by the definition of the time indices $\{i_{\ell}\}_{\ell = 1}^L$, it holds that $u^{\textnormal{imp}}_{i_{\ell} + \tau} = u^{\textnormal{imp}}_{i_{\ell+1}}$. It is then clear that
\begin{equation} \label{eqn:telescope}
u^{\textnormal{imp}}_{i_{\ell + 1}} = u^{\textnormal{req}}_{i_{\ell}}, \quad \ell \in [L].
\end{equation}
Thus, the accumulated error in \refeq{accerror} becomes
\[
  e_{k}  = \sum_{i = 1}^{k} \l( u^\textnormal{imp}_{i} - u^\textnormal{req}_{i} \r)
   = \sum_{\ell = 1}^{L} \l( u^\textnormal{imp}_{i_{\ell}} - u^\textnormal{req}_{i_{\ell}} \r)
   = u^\textnormal{imp}_{i_{1}} - u^\textnormal{req}_{i_{L}},
\]
where the last equality follows by \eqref{eqn:telescope}.
Hence,
\[
\| e_k \| \leq \|u^\textnormal{imp}_{i_{1}} - u^\textnormal{req}_{i_{L}}\| \leq  \diam \Il.
\]

\section{Convergence Analysis of the Closed-Loop System} \label{sec:ga_opt}
In this section, we study analytically the effect of having resource agents with bounded accumulated-error in a system where those resource agents are controlled by a grid agent.

For a restricted scenario (a set of conditions that we state precisely further below), we show that the control algorithm used in the grid agent converges to the optimum (minimum) value of the objective function.






\subsection{Control Algorithm: Projected Gradient Descent}
Consider a grid agent that has $n$ follower agents and possibly a leader agent. Let $x_k = (u^{\textrm{req}}_{k, 1}, \ldots, u^{\textrm{req}}_{k, n}) \in \reals^{2n}$ denote the vector of requested setpoints computed by the grid agent at time step $k$. Similarly, let $y_k = (u^{\textrm{imp}}_{k, 1}, \ldots, u^{\textrm{imp}}_{k, n}) \in \reals^{2n}$ denote the vector of actually implemented setpoints (implemented by the resources) at time step $k$.
Let $\hat{y}_k= ((P_1,Q_1),\ldots,(P_n,Q_n)) \in \reals^{2n}$ denote the vector of estimated bus powers of the $n$ buses in the grid to which the followers are connected, where we suppose that each resource is connected to a distinct bus, and that no other loads or generators are connected to those buses.
\reffig{closedloop} shows a block diagram of this setup.

\begin{figure}
\centering
\includegraphics[width=\columnwidth]{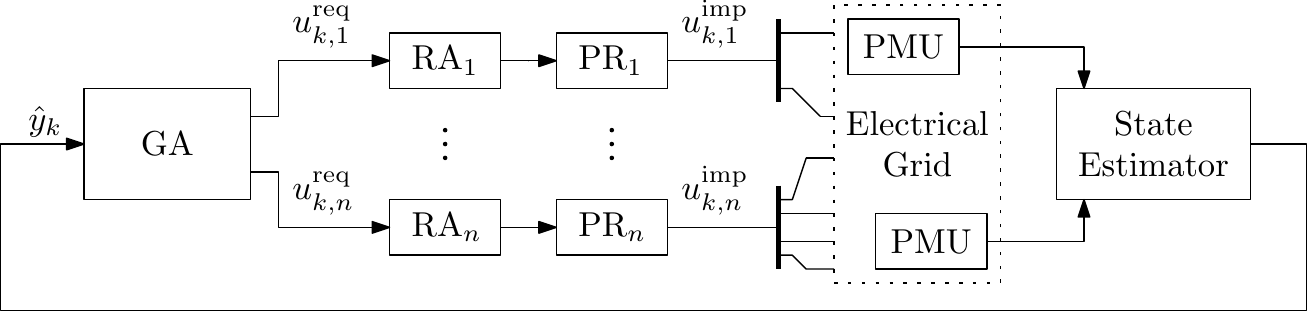}
\caption{Setup involving a grid agent (GA), resource agents (RA$_i$), the physical resources themselves (PR$_i$), the electrical grid including phasor measurement units (PMUs) and the state estimator.}
\label{fig:closedloop}
\end{figure}





As described in \cite{commelec}, the grid agent computes setpoints using a continuous gradient steering algorithm
\begin{equation} \label{eqn:ga_algo}
  x_{k+1} = \mathrm{proj}_{\Ul} \l(\hat{y}_k - \alpha \nabla J_k(\hat{y}_k) \r),
\end{equation}
where $\Ul$ is the set of \emph{admissible} setpoints as defined in \cite{commelec}, $J_k$ is the current objective function that includes the weighted sum of the followers cost functions as well as the penalty terms related to the grid quality of supply and deviation from the request from the leader (the parent grid agent), and $\alpha$ is the gradient step size. Observe that since the steering is performed from the estimated grid state $\hat{y}_k$, 
without further knowledge on the relation between $x_k$ and $\hat{y}_k$, no guarantees can be provided for this algorithm. However, 
if the resource agents are obedient in the sense of Definition \ref{def:bounded_error}, we can show that under certain conditions
the grid's state 
converges to the optimal state.

\subsection{A Restricted Scenario} 
We perform the analysis of \eqref{eqn:ga_algo} under the following set of conditions.

\begin{hypothesis} \label{asm:simpl}
Consider the setting where:
\begin{enumerate}
  \item[(i)] The state is estimated perfectly, i.e., $\hat{y}_k = y_k$, hence the gradient steering algorithm simplifies to:
\[
x_{k+1} = \mathrm{proj}_{\Ul} \l(y_k - \alpha \nabla J_k(y_k) \r).
\]
\item[(ii)] The projection in \eqref{eqn:ga_algo} is not ``active'', i.e.,
\[
y_k - \alpha \nabla J_k(y_k) \in \Ul \quad \text{for all } k.
\]
\item[(iii)] The objective function $J_k$ is fixed, namely $J_k \equiv J$.
\item[(iv)] The objective function $J$ is quadratic and convex, namely
\[
J(x) = x^{\transpose} \Gamma x + \gamma x + a
\]
for some positive definite and symmetric matrix $\Gamma \succ 0 \in \reals^{2n \times 2n}$, vector $\gamma \in \reals^{2n}$, and $a \in \reals$.
\end{enumerate}
\end{hypothesis}
Condition (i) of Hypothesis \ref{asm:simpl} expresses that the state of the grid is solely determined by the setpoints implemented by the followers, and that the grid agent knows this state exactly. We leave the analysis of the more realistic ``noisy case'' for further research. 
Condition (ii)  corresponds to
an evolution of the system in which
all operating points of the grid lie well inside the feasibility region defined by quality-of-supply constraints and the $PQ$ profiles of the different resources. We cannot argue that this condition is always satisfied in practice --- on the contrary, a main feature of the \cl framework is that it can cope with grids ``under stress''.
What we can argue, however, is that in such stressed-grid situations, \emph{safety}, rather than
optimal performance (with respect to $J_k$), is the primary concern. 
Safety of the grid is guaranteed when the vector of setpoints lies inside the 
admissible set $\Ul$, as discussed in \cite{commelec}. If, on the other hand, the grid is not stressed, then we are primarily interested in achieving (close to) optimal performance, and for this case Condition (ii) will be satisfied. 


 Condition (iii) merely requires that the rate of change of the advertisement messages is much slower than \cl's temporal resolution. Finally, Condition (iv) can be satisfied by requiring the resource agents to send only quadratic cost functions, and by linearizing the penalty term that is related to the grid QoS. In particular, this condition is satisfied in the setup considered in \cite{commelec}.

Regardless of whether Hypothesis \ref{asm:simpl} is valid in realistic scenarios,
our result may be viewed as a potential stepping stone to a more general result,
 and as a sanity check:
suppose instead that we had found an impossibility result in this simplified scenario,
then this would already have ruled out the possibility of obtaining a more general convergence result.

\subsection{Convergence Result}
For the restricted scenario 
described above, we prove the following result.

\begin{thm} \label{thm:main}
  Consider a system
consisting of:
\begin{itemize}
\item a grid agent, with control law
\eqref{eqn:ga_algo} under the conditions stated in Hypothesis \ref{asm:simpl} and with step size $\alpha \leq 1/\rho(\Gamma)$, where $\rho(\Gamma)$ is the spectral radius of $\Gamma$;
\item $n \in \natnum_{>0}$ follower resource agents, where for every $i \in [n]$, resource agent $i$ has $c_i$-bounded accumulated-error. Let $c:= \begin{bmatrix} c_1 & \ldots & c_n \end{bmatrix} \in \reals^n$.
\end{itemize}
Then, it holds that 
\[
  \lim_{k \rightarrow \infty} \frac 1 k \sum_{i = 1}^k y_k =  x^*,\quad \text{where }
x^* \defas \arg \min_{x} J(x) = -\Gamma^{-1} \gamma.
\]
%
Moreover, the rate of convergence is given by
\[
\l \| \frac 1 k \sum_{i = 1}^k y_k - x^* \r\| \leq  \frac{\l(1 - r^k \r)}{k (1 - r)} \l( \| \psi \| +  \l\|\Gamma^{-1}\r\| \l\|\gamma\r\| r^2 + \l\|c\r\| \r)
\]
where  $r \defas \rho \l ( I -\alpha  \Gamma \r) < 1$ and $\psi \defas x_1 + 2 \alpha \gamma -  \alpha^2 \Gamma \gamma$.
\end{thm}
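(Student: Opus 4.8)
The plan is to reduce the grid-agent iteration, under Hypothesis~\ref{asm:simpl}, to a linear time-invariant recursion driven by the stacked per-agent accumulated errors, and then to bound the running average of that recursion's state. First I would simplify the control law: by conditions (i)--(iv) the gradient of $J$ is affine, so \eqref{eqn:ga_algo} becomes $x_{k+1} = A y_k - \alpha\gamma$ with $A = I - \alpha\Gamma$. Since $\Gamma \succ 0$, $\Gamma^{-1}$ exists, $x^* = -\Gamma^{-1}\gamma$ is the unique minimizer of $J$, and it is the unique fixed point: $A x^* - \alpha\gamma = x^*$. Because $\Gamma$ is symmetric with spectrum in $(0,\rho(\Gamma)]$ and $\alpha \le 1/\rho(\Gamma)$, $A$ is symmetric with spectrum in $[0,1)$; hence $r = \rho(A) = \|A\| < 1$, $\|A^m\| = r^m$ for all $m$, $I-A = \alpha\Gamma$ is invertible, and $\sum_{m=0}^{k-1} A^m = (I-A^k)(I-A)^{-1}$ with $\|(I-A^k)(I-A)^{-1}\| \le (1-r^k)/(1-r)$. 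Stacking the per-agent accumulated errors into $e_k \in \reals^{2n}$ as in \eqref{eq:accerror}, the $c_i$-bounded accumulated-error hypotheses give $\|e_k\|^2 = \sum_{i=1}^n \|e_{k,i}\|^2 \le \sum_{i=1}^n c_i^2 = \|c\|^2$, so $\|e_k\| \le \|c\|$ for every $k$, with $e_0 = \boldsymbol 0$.

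Next, substituting $y_k = x_k + (e_k - e_{k-1})$ into the control law and subtracting the fixed-point identity, the deviation $z_k = x_k - x^*$ satisfies $z_{k+1} = A z_k + A(e_k - e_{k-1})$, whence $z_k = A^{k-1} z_1 + \sum_{j=1}^{k-1} A^{k-j}(e_j - e_{j-1})$. Summing over $i = 1,\dots,k$, swapping the order of summation in the double sum, and applying summation by parts — using $e_0 = \boldsymbol 0$ and the telescoping identity $\sum_{m=1}^{p} A^m - \sum_{m=1}^{p-1} A^m = A^{p}$ — trades the increments $e_j - e_{j-1}$ for the bounded quantities $e_j$ and yields $\sum_{i=1}^k z_i = (I-A^k)(I-A)^{-1} z_1 + \sum_{j=1}^{k-1} A^{k-j} e_j$. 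Since $\tfrac1k\sum_{i=1}^k y_i - x^* = \tfrac1k\sum_{i=1}^k z_i + \tfrac1k e_k$ and the leftover $\tfrac1k e_k$ is precisely the $j=k$ summand $A^{0} e_k$, this collapses to $\tfrac1k\sum_{i=1}^k y_i - x^* = \tfrac1k(I-A^k)(I-A)^{-1} z_1 + \tfrac1k\sum_{j=1}^{k} A^{k-j} e_j$.

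It then remains to bound the two terms. For the second, $\|\sum_{j=1}^k A^{k-j} e_j\| \le \|c\| \sum_{m=0}^{k-1} r^m = \|c\|(1-r^k)/(1-r)$. For the first, I would bound $\|z_1\|$ without reintroducing $x^*$ via the algebraic identity $z_1 = \psi + A^2 \Gamma^{-1}\gamma$, equivalently $x_1 + (I - A^2)\Gamma^{-1}\gamma = \psi$, which follows from $I - A^2 = 2\alpha\Gamma - \alpha^2\Gamma^2$ and $x^* = -\Gamma^{-1}\gamma$; this gives $\|z_1\| \le \|\psi\| + r^2\|\Gamma^{-1}\|\,\|\gamma\|$. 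Multiplying the first term's operator-norm bound by $1/k$ gives the prefactor $(1-r^k)/(k(1-r))$, and adding the two estimates produces exactly the claimed rate; since $r < 1$ the right-hand side is $O(1/k)$, so $\tfrac1k\sum_{i=1}^k y_i \to x^*$.

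I expect the summation-by-parts step to be the main obstacle, since it is the only place where boundedness of the \emph{accumulated} error (Definition~\ref{def:bounded_error}), as opposed to boundedness of the per-step increments, is genuinely used; the index bookkeeping there — in particular arranging that the residual $e_k/k$ is absorbed into the geometric series rather than contributing a separate term — is what makes the final constant come out as stated. The identification of $\|z_1\|$ with $\psi$ is a minor, slightly non-obvious rearrangement that is routine once written down.
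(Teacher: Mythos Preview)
Your proof is correct and follows essentially the same strategy as the paper's: linearize the control law to $x_{k+1} = A y_k - \alpha\gamma$ with $A = I-\alpha\Gamma$, use the symmetry and spectrum of $\Gamma$ to get $\|A^m\| = r^m$ with $r<1$, and then rearrange the averaged error contribution so that \emph{accumulated} (rather than instantaneous) errors appear, each bounded by $\|c\|$, against a geometric series in $r$. The organizational differences are that the paper derives an explicit formula for $y_k$ (Lemma~\ref{lem:yk}) in which $\psi$ is built in from the start and then regroups the double sum $\sum_i\sum_j A^{i-j}\epsilon_j$ along diagonals, whereas you center the recursion at $x^*$ (so the constant forcing term $-\alpha\gamma$ drops out), carry the accumulated errors $e_k$ directly, and use Abel summation to undo the increments; you recover $\psi$ only at the end through the identity $z_1 = \psi + A^2\Gamma^{-1}\gamma$. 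These are two bookkeeping variants of the same argument and produce the identical constant; your version is arguably a bit cleaner, since subtracting $x^*$ early eliminates one of the three terms the paper tracks separately.
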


We prove this theorem using the following two lemmas.

\begin{lemma} \label{lem:yk}
Under the conditions from Hypothesis \ref{asm:simpl}, the implemented setpoint $y_k$ satisfies the following recursion
\begin{equation} \label{eqn:y_rec}
  y_{k+1}  = (I - \alpha\Gamma) y_k - \alpha \gamma + \epsilon_{k+1}, \quad k\in \natnum, k \geq 1,
\end{equation}
where for every $k \in \natnum$
\[
  \epsilon_k \defas \begin{cases} y_k - x_k & \text{if } k \in \natnum, k \geq 1 \\
                     \boldsymbol{0} & \text{if } k = 0 \end{cases}
\]
is the instantaneous error. Moreover, $y_k$ is given explicitly by
\begin{align}
  y_k& = (I-\alpha \Gamma)^{k-1 } \psi + \sum_{i=0}^k (I - \alpha \Gamma)^{k-i} \epsilon_i \notag \\
     & \quad \quad \quad \quad \quad \quad \quad - \sum_{j=0}^k (I - \alpha \Gamma)^{j} (\alpha \gamma) \label{eqn:y_1} \\
     & = (I-\alpha \Gamma)^{k-1 } \psi + \sum_{i=0}^k (I - \alpha \Gamma)^{k-i} \epsilon_i \notag \\
     & \quad \quad \quad \quad \quad \quad \quad - \Gamma^{-1} (I - (I-\alpha \Gamma)^{k+1}) \gamma \label{eqn:y_2},
\end{align}
for every $k \in \natnum, k \geq 1$,
where $\psi \defas x_1 + 2 \alpha \gamma -  \alpha^2 \Gamma \gamma$ is a constant vector that depends on the initial condition, i.e., the first request $x_1$.
\end{lemma}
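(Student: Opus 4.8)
The plan is to derive the one-step recursion \eqref{eqn:y_rec} directly from the simplified control law of Hypothesis~\ref{asm:simpl}, and then obtain the two explicit formulas by unrolling that recursion, using a short induction to keep the index bookkeeping clean.

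First I would establish \eqref{eqn:y_rec}. Conditions (i) and (ii) of Hypothesis~\ref{asm:simpl} reduce \eqref{eqn:ga_algo} to $x_{k+1} = y_k - \alpha\nabla J(y_k)$, while conditions (iii)--(iv) make $J$ a fixed convex quadratic with affine gradient $\nabla J(y_k) = \Gamma y_k + \gamma$ (using that $\Gamma$ is symmetric). Hence $x_{k+1} = (I - \alpha\Gamma)y_k - \alpha\gamma$, and since $\epsilon_{k+1} = y_{k+1} - x_{k+1}$ for $k \in \natnum$ by definition, adding $\epsilon_{k+1}$ to both sides gives exactly \eqref{eqn:y_rec}.

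Next, writing $A := I - \alpha\Gamma$, I would prove \eqref{eqn:y_1} by induction on $k$. For the base case $k = 1$, the claimed right-hand side is $\psi + A\epsilon_0 + \epsilon_1 - \alpha\gamma - A\alpha\gamma$; using $\epsilon_0 = \boldsymbol{0}$ and $y_1 = x_1 + \epsilon_1$, this equals $y_1$ if and only if $\psi - \alpha\gamma - (I-\alpha\Gamma)\alpha\gamma = x_1$, i.e. if and only if $\psi = x_1 + 2\alpha\gamma - \alpha^2\Gamma\gamma$, which is exactly the stated definition of $\psi$. For the inductive step I would substitute the formula for $y_k$ into \eqref{eqn:y_rec}: multiplying the $A^{k-1}\psi$ term, the $\epsilon$-sum and the $\alpha\gamma$-sum by $A$, then absorbing the new $+\epsilon_{k+1}$ as the $i = k+1$ term of the $\epsilon$-sum and the new $-\alpha\gamma$ as the $j = 0$ term of the $\alpha\gamma$-sum, yields the formula at $k+1$. (Equivalently, one may unroll \eqref{eqn:y_rec} from $y_1$ to get $y_k = A^{k-1}y_1 - \sum_{j=0}^{k-2}A^j\alpha\gamma + \sum_{i=2}^{k}A^{k-i}\epsilon_i$ and reconcile it with \eqref{eqn:y_1} via the same identity for $\psi$.) Finally, \eqref{eqn:y_2} follows from \eqref{eqn:y_1} by the finite geometric-series identity for matrices: since $I - A = \alpha\Gamma$ is invertible (as $\Gamma \succ 0$) and commutes with $A$ (being a polynomial in $\Gamma$), one has $\sum_{j=0}^{k}A^j = (\alpha\Gamma)^{-1}(I - A^{k+1})$, so $\sum_{j=0}^{k}A^j(\alpha\gamma) = \Gamma^{-1}\bigl(I - (I-\alpha\Gamma)^{k+1}\bigr)\gamma$, which is the only term in which \eqref{eqn:y_1} and \eqref{eqn:y_2} differ.

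I do not expect a genuine obstacle: once the recursion is in hand the rest is elementary. The only points needing care are verifying that the constant $\psi$ is chosen so that extending the $\epsilon$-sum down to the (vacuous) term $i = 0$ and the $\alpha\gamma$-sum up to $j = k$ is consistent --- this is forced precisely by the base case of the induction --- and checking that the geometric-series manipulation in the last step is legitimate, which is immediate from $\Gamma \succ 0$ and the commutativity of $\Gamma$ with $I - \alpha\Gamma$.
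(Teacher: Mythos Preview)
Your proposal is correct and follows essentially the same route as the paper: derive the recursion from the simplified control law, verify the explicit formula \eqref{eqn:y_1} by substitution into the recursion (the paper phrases this as ``plugging in'' rather than as an explicit induction, but it is the same computation), determine $\psi$ from the $k=1$ case, and pass to \eqref{eqn:y_2} via the matrix geometric-series identity using invertibility of $\alpha\Gamma$.
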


\begin{proof}
First observe that under Hypothesis \ref{asm:simpl}, the algorithm \eqref{eqn:ga_algo} becomes
\[
x_{k+1} = y_k - \alpha\, (\Gamma y_k + \gamma) = (I - \alpha\Gamma) y_k - \alpha \gamma.
\]
Also, by definition, $x_{k+1} = y_{k+1} - \epsilon_{k+1}$. Hence, we obtain recursion \eqref{eqn:y_rec}.
Next, we show that \eqref{eqn:y_1} satisfies \eqref{eqn:y_rec}. Indeed, plugging \eqref{eqn:y_1} in the right-hand-side of \eqref{eqn:y_rec} yields
\begin{align*}
& (I-\alpha \Gamma)^{k } \psi + \sum_{i=0}^k (I - \alpha \Gamma)^{k-i + 1} \epsilon_i - \sum_{j=0}^k (I - \alpha \Gamma)^{j+1} (\alpha \gamma) \\
& \quad \quad \quad \quad \quad \quad \quad - \alpha \gamma + \epsilon_{k+1} \\
& = (I-\alpha \Gamma)^{k } \psi + \sum_{i=0}^{k+1} (I - \alpha \Gamma)^{(k + 1) - i} \epsilon_i - \sum_{j=0}^{k+1} (I - \alpha \Gamma)^{j} (\alpha \gamma) \\
& = y_{k+1}
\end{align*}
as required.
We obtain the expression for $\psi$ by evaluating our explicit expression for $y_k$ at $k=1$, i.e.,
\[
  y_1 = \psi + \epsilon_1 + (I-\alpha \Gamma ) \epsilon_0 - \alpha \gamma - (I-\alpha \Gamma )(\alpha \gamma),
\]
and then solving for $\psi$ and using that $\epsilon_0 = \boldsymbol{0}$ and that $y_1 - \epsilon_1 = x_1$.

Finally, \eqref{eqn:y_2} follows by using the formula for geometric series of matrices. Namely, for any square matrix $A$ we have that
\[
\sum_{i = 0}^k A^i = (I - A)^{-1} (I - A^k)
\]
provided that $I - A$ is invertible. In our case, $A = I - \alpha \Gamma$, and $I - A = \alpha \Gamma$ is invertible by Hypothesis \ref{asm:simpl} (iv).
\end{proof}

\begin{lemma} \label{lem:eigen}
  We have that $(I-\alpha \Gamma) \succeq 0$ and $\rho(I-\alpha \Gamma) < 1$ if and only if $\Gamma \succ 0$ and $\rho(\Gamma)\leq 1/\alpha$.
\end{lemma}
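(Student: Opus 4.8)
The plan is to diagonalize and reduce everything to scalar inequalities on the eigenvalues. Since $\Gamma$ is symmetric (Hypothesis~\ref{asm:simpl}(iv)), it has an orthonormal eigenbasis with real eigenvalues $\lambda_1,\ldots,\lambda_{2n}$, and $I-\alpha\Gamma$ is symmetric with the same eigenvectors and with eigenvalues $1-\alpha\lambda_i$. Recalling that $\alpha>0$, each of the four conditions in the statement then becomes a condition on $\min_i \lambda_i$ and $\max_i \lambda_i$, and the equivalence follows by comparing them.

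Concretely, I would first record the translations. The condition $(I-\alpha\Gamma)\succeq 0$ holds iff $1-\alpha\lambda_i\ge 0$ for every $i$, i.e. iff $\max_i\lambda_i\le 1/\alpha$. The condition $\rho(I-\alpha\Gamma)<1$ holds iff $|1-\alpha\lambda_i|<1$ for every $i$, i.e. iff $0<\alpha\lambda_i<2$ for every $i$, i.e. iff $\min_i\lambda_i>0$ and $\max_i\lambda_i<2/\alpha$. Finally, since the spectral radius of a symmetric positive-definite matrix is its largest eigenvalue, ``$\Gamma\succ 0$ and $\rho(\Gamma)\le 1/\alpha$'' is exactly ``$\min_i\lambda_i>0$ and $\max_i\lambda_i\le 1/\alpha$''.

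With these in hand the equivalence is immediate in both directions. For the forward direction, conjoining the first two translations gives $\min_i\lambda_i>0$ (hence $\Gamma\succ 0$) and $\max_i\lambda_i\le 1/\alpha$ (hence $\rho(\Gamma)=\max_i\lambda_i\le 1/\alpha$); the extra upper bound $\max_i\lambda_i<2/\alpha$ is then automatic. For the reverse direction, $\Gamma\succ 0$ and $\rho(\Gamma)\le 1/\alpha$ give $0<\lambda_i\le 1/\alpha$ for every $i$, hence $0\le 1-\alpha\lambda_i<1$ for every $i$, which is precisely $(I-\alpha\Gamma)\succeq 0$ together with $\rho(I-\alpha\Gamma)=\max_i|1-\alpha\lambda_i|<1$.

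I do not expect any genuine obstacle; this is a routine spectral computation. The one point worth stating carefully is where the strict positive-definiteness of $\Gamma$ enters: it is forced by the \emph{strict} inequality $\rho(I-\alpha\Gamma)<1$, because $\lambda_i=0$ would give $1-\alpha\lambda_i=1$ and hence $\rho(I-\alpha\Gamma)\ge 1$. This is also why a non-strict ``$\Gamma\succeq 0$'' on the right-hand side would not be equivalent, and why the non-strict bound $\rho(\Gamma)\le 1/\alpha$ matches the non-strict $(I-\alpha\Gamma)\succeq 0$ on the left.
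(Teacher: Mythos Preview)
Your proposal is correct and takes essentially the same approach as the paper: both arguments diagonalize the symmetric matrix $\Gamma$ and reduce the matrix conditions to the scalar equivalence $0<\lambda_i\le 1/\alpha \Leftrightarrow 0\le 1-\alpha\lambda_i<1$. Your version is somewhat more detailed (explicitly tracking which inequality is strict and why), but the underlying idea is identical.
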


\begin{proof}
Let $\lambda_\Gamma$ be an eigenvalue of $\Gamma$. We have that
  \begin{align*}
    0 &< \lambda_\Gamma \leq \frac{1}{\alpha}  \\
    \Leftrightarrow 0 &> -\alpha \lambda_\Gamma \geq -1  \\
    \Leftrightarrow 1 &> 1 -\alpha \lambda_\Gamma \geq 0.
  \end{align*}
  However, $1 -\alpha \lambda_\Gamma$ is an eigenvalue of $I-\alpha \Gamma$ if and only if $\lambda_\Gamma$ is an eigenvalue of $\Gamma$, which completes the proof.
\end{proof}

\begin{proof}[Proof of Theorem \ref{thm:main}]
We use Lemma \ref{lem:yk}, and take the average of $y_1, \ldots, y_k$:
\begin{align*}
  \frac{1}{k}\sum_{i = 1}^k y_i & = \frac{1}{k}\sum_{i = 1}^k \Big[ (I-\alpha  \Gamma )^{i-1} c_1  + \sum_{j=0}^i (I - \alpha \Gamma)^{i-j} \epsilon_i \\
  & \quad \quad \quad \quad \quad- \Gamma^{-1} (I - (I-\alpha \Gamma)^{i+1}) \gamma \Big] \\
  & = -\Gamma^{-1}\gamma + \l(\frac{1}{k}\sum_{i = 1}^k  (I -\alpha  \Gamma )^{i-1}\r)c_1  \\
  & \quad + \Gamma^{-1} \l(\frac{1}{k}\sum_{i = 1}^k  (I-\alpha \Gamma)^{i+1} \r) \gamma \\
  & \quad + \frac{1}{k}\sum_{i = 1}^k \sum_{j=0}^i (I - \alpha \Gamma)^{i-j} \epsilon_i.
\end{align*}
Note that $x^* = -\Gamma^{-1}\gamma$ and set $r \defas \rho \l ( I -\alpha  \Gamma \r)$. We have that
\begin{align}
  &\l \| \frac{1}{k}\sum_{i = 1}^k y_i - x^* \r\|  \notag \\
  & \leq  \frac{\l \|c_1\r\|}{k}\sum_{i = 1}^k  \l \|(I -\alpha  \Gamma)^{i-1}\r \|   +  \frac{\l\|\Gamma^{-1}\r\| \l\|\gamma\r\|}{k}\sum_{i = 1}^k  \l\|(I-\alpha \Gamma)^{i+1}\r\| \notag  \\
  & \quad + \l\|\frac{1}{k}\sum_{i = 1}^k \sum_{j=0}^i (I - \alpha \Gamma)^{i-j} \epsilon_i \r\| \notag\\
  & = \frac{\l\|c_1\r\|}{k}\sum_{i = 1}^k  \rho \l ( (I -\alpha  \Gamma)^{i-1}\r ) + \frac{\l\|\Gamma^{-1}\r\| \l\|\gamma\r\|}{k}\sum_{i = 1}^k  \rho \l ((I-\alpha \Gamma)^{i+1}\r )  \notag \\
  & \quad+ \l\|\frac{1}{k}\sum_{i = 1}^k \sum_{j=0}^i (I - \alpha \Gamma)^{i-j} \epsilon_i \r\| \notag\\
  & = \frac{ \l \|c_1\r\|}{k}\sum_{i = 1}^k  r^{i-1}   + \frac{\l\|\Gamma^{-1}\r\|  \l\|\gamma\r\|}{k}\sum_{i = 1}^k  r^{i+1}  \notag \\
  & \quad+ \l\|\frac{1}{k}\sum_{i = 1}^k \sum_{j=0}^i (I - \alpha \Gamma)^{i-j} \epsilon_i \r\| \notag \\
  & = \frac{\l \|c_1\r\| \l(1 - r^k \r)}{k (1 - r)} + \frac {\l\|\Gamma^{-1}\r\| \l\|\gamma\r\| r^2 \l(1 - r^{k}\r)} {k (1 - r)} \notag \\
  & \quad+ \l\|\frac{1}{k}\sum_{i = 1}^k \sum_{j=0}^i (I - \alpha \Gamma)^{i-j} \epsilon_i \r\|. \label{eqn:bound}
\end{align}
In the above sequence, the first equality follows by the fact that for any symmetric matrix $A$, $\| A \|_2 = \rho(A)$; and the second equality holds by the fact that $\rho(A^i) = \l(\rho(A)\r)^i$. (The latter holds for an arbitrary $A$.) Observe that by the hypothesis of this theorem and Lemma \ref{lem:eigen}, $r < 1$. Thus, the first two terms in \eqref{eqn:bound} converge to zero. As for the third term, we have
\begin{align*}
  & \bigg \|\frac{1}{k}\sum_{i = 1}^k \sum_{j=0}^i (I - \alpha \Gamma)^{i-j} \epsilon_i \bigg\| = \\
  &\frac{1}{k} \big\| \l[ (I - \alpha \Gamma) \epsilon_0 + \epsilon_1\r] + \l[(I - \alpha \Gamma)^2 \epsilon_0 + (I - \alpha \Gamma) \epsilon_1 + \epsilon\r] + \\
  &\ldots  + \l[(I - \alpha \Gamma)^k \epsilon_0 + \ldots + \epsilon_k\r]\big\| \\
& = \frac{1}{k} \Bigg\|\sum_{i=1}^k \epsilon_i\ +  (I - \alpha \Gamma) \sum_{i=0}^{k-1} \epsilon_i  + (I - \alpha \Gamma)^2 \sum_{i=0}^{k-2} \epsilon_i \\
& \quad \quad \quad + \ldots + (I - \alpha \Gamma)^k \epsilon_0\Bigg\|\\
& \leq \frac{1}{k} \l( \l\| \sum_{i=1}^k \epsilon_i \r\|  + r \l\| \sum_{i=0}^{k-1} \epsilon_i  \r\| + r^2 \l\| \sum_{i=0}^{k-2} \epsilon_i  \r\| + \ldots + r^k \l\| \epsilon_0 \r\| \r) \\
& \leq \frac{\l\|c\r\|}{k} \sum_{i = 0}^{k-1} r^i = \frac{\l\|c\r\| \l(1 - r^k \r)}{k (1 - r)},
\end{align*}
where the second inequality follows by the premise that all resource agents have bounded accumulated-error, and the fact that $\epsilon_0 = \boldsymbol{0}$. Combining this with \eqref{eqn:bound} completes the proof of the Theorem.
\end{proof}

\section{Bounded Accumulated-Error: Discrete (Real-Power) Resources}
\label{sec:drr}
In this section, we show how to achieve boundedness of the accumulated error for
resources that can only implement power setpoints from a \emph{discrete} set.
As an application, we consider a heating system consisting of a finite number of heaters that each can either be switched on or off (see \refsec{building} below).
We focus here on \emph{deterministic} resources in the sense of \refdef{res_classes}.
We defer the treatment of uncertain resources to \refsec{uncertain}.

\subsection{A General Construction}
To keep the exposition simple, and because it suffices for our concrete examples presented in \refsec{building}, we restrict here to a scenario with a resource that only produces/consumes \emph{real power}.

\begin{definition} 
\label{def:max_step}
For any \term{finite} non-empty set $\mcal{S} \subset \reals$, whose elements we label as $s_1 < s_2 < \ldots < s_{|\mcal{S}|}$, we define the \term{maximum stepsize of \mcal{S}} as
  \[
    \Delta_\mcal{S} \defas
      \begin{cases}
        0 & \text{if $|\mcal{S}|=1$}\\
         \max_{i \in [|\mcal{S}|-1]} s_{i+1} - s_i & \text{if $|\mcal{S}|>1$}.
  \end{cases}
  \]
\end{definition}


\begin{definition} 
\label{def:max_step_coll}
  For every finite non-empty collection of sets $\mathbb{S}\defas \{ \mcal{S}_i \}_{i \in [n]}$ where $n\in \natnum$ and
where $\mcal{S}_i  \subset \reals$ is a finite non-empty set for all $i\in [n]$, we define the maximum step size of $\mathbb{S}$ as
\[
  \Delta_\mathbb{S}\defas \max_{i \in [n]} \Delta_{\mcal{S}_i}.
\]
\label{def:collstep}
\end{definition}
The following theorem states our main result for discrete real-power resources. Its proof is deferred to \refsec{gen}.

\begin{thm}
  Let $\mathbb{S}$ be a finite non-empty collection of sets $\mathbb{S}\defas \{ \mcal{S}_i \}_{i \in [n]}$ where $n\in \posint$ and $\mcal{S}_i  \subset \reals$ is a finite non-empty set for all $i\in [n]$. Let $\Delta_{\mathbb{S}}$ denote the maximum stepsize of $\mathbb{S}$.
  Let $(i_k)_{k \in \posint}  $ be any sequence with $i_k \in [n]$, and let
  $P^\text{req}_k \in \conv (\mcal{S}_{i_k})$ for every $k\in \posint$. Let $e^P_k \in \reals$ be the accumulated  error as defined in \refeq{accerror} for every $k\in\natnum$.
Then, if a resource agent implements
\begin{equation}
P^{\textrm{imp}}_k = \mathrm{proj}_{\mcal{S}_{i_k}}(P^\text{req}_k - e^P_{k-1})
\label{eq:discreterealrule}
\end{equation}
for every $k\in \posint$,  
it has $\tfrac12\Delta_{\mathbb{S}}$-bounded accumulated-error.
\label{thm:discreteres}
\end{thm}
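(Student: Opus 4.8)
The plan is to prove directly, by induction on $k$, that $|e^P_k| \le \tfrac12\Delta_{\mathbb{S}}$ for every $k \in \natnum$. Since the resource produces/consumes only real power we have $e^Q_k = 0$, so $\|e_k\| = |e^P_k|$ and this bound is precisely the $\tfrac12\Delta_{\mathbb{S}}$-bounded accumulated-error property. The base case $k=0$ is immediate, since $e_0 = \boldsymbol{0}$ by definition of the accumulated error in \refeq{accerror}.

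The first step is to rewrite the accumulated error as a projection residual. Setting $z_k \defas P^\text{req}_k - e^P_{k-1}$, combining \refeq{accerror} with the update rule \refeq{discreterealrule} gives
\[
e^P_k = e^P_{k-1} + P^\text{imp}_k - P^\text{req}_k = \mathrm{proj}_{\mcal{S}_{i_k}}(z_k) - z_k,
\]
so that bounding $|e^P_k|$ amounts to bounding the distance from the point $z_k$ to the nearest element of the finite set $\mcal{S}_{i_k}$. The ingredients available are the hypothesis $P^\text{req}_k \in \conv(\mcal{S}_{i_k}) = [\min \mcal{S}_{i_k}, \max \mcal{S}_{i_k}]$ and the induction hypothesis on $|e^P_{k-1}|$.

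For the inductive step I would distinguish three cases according to where $z_k$ falls relative to the interval $[\min \mcal{S}_{i_k}, \max \mcal{S}_{i_k}]$. If $z_k$ lies in this interval, then it is sandwiched between two consecutive elements $s_j \le z_k \le s_{j+1}$ of $\mcal{S}_{i_k}$ (or equals one of them), so the nearest element is at distance at most $\tfrac12(s_{j+1}-s_j) \le \tfrac12\Delta_{\mcal{S}_{i_k}} \le \tfrac12\Delta_{\mathbb{S}}$, and the claim follows without invoking the induction hypothesis. If $z_k < \min \mcal{S}_{i_k}$, then $\mathrm{proj}_{\mcal{S}_{i_k}}(z_k) = \min \mcal{S}_{i_k}$ and
\[
e^P_k = \min \mcal{S}_{i_k} - z_k = \big(\min \mcal{S}_{i_k} - P^\text{req}_k\big) + e^P_{k-1};
\]
the bracketed term is $\le 0$ because $P^\text{req}_k \ge \min \mcal{S}_{i_k}$, while $e^P_k > 0$ because $z_k < \min \mcal{S}_{i_k}$, so $0 < e^P_k \le e^P_{k-1}$ and hence $|e^P_k| \le |e^P_{k-1}| \le \tfrac12\Delta_{\mathbb{S}}$ by the induction hypothesis. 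The case $z_k > \max \mcal{S}_{i_k}$ is entirely symmetric and gives $e^P_{k-1} \le e^P_k < 0$.

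The one place that is not a routine computation is the out-of-interval cases: there the projection residual can be arbitrarily large relative to $\Delta_{\mathbb{S}}$, and what rescues the argument is not a bound on a single step but the observation that $|e^P_k|$ cannot exceed $|e^P_{k-1}|$ — which is exactly where the standing assumption $P^\text{req}_k \in \conv(\mcal{S}_{i_k})$ is essential (a request outside the advertised convex hull would break the invariant). It also remains to check the degenerate subcases, namely $|\mcal{S}_{i_k}| = 1$ (then $P^\text{req}_k$ is forced equal to the unique element, $e^P_k = e^P_{k-1}$, and the bound is inherited, consistent with $\Delta_{\mathbb{S}}$ possibly being $0$) and $z_k$ landing exactly on a grid point ($e^P_k = 0$); both are absorbed into the three cases above.
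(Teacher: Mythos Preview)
Your proof is correct and, at its core, matches the paper's argument: both reduce to the identity $e^P_k = \mathrm{proj}_{\mcal{S}_{i_k}}(z_k) - z_k$ with $z_k = P^\text{req}_k - e^P_{k-1}$, bound this residual by $\tfrac12\Delta_{\mathbb{S}}$ when $z_k$ lies in $[\min\mcal{S}_{i_k},\max\mcal{S}_{i_k}]$, and use the induction hypothesis on $e^P_{k-1}$ for the out-of-interval case. The difference is packaging: the paper first abstracts the recurrence into a general lemma (\reflem{berror}) and the notion of a \map{\mcal{G}} on the inflated domain $\conv(\mcal{S}_{i_k}) + [-\tfrac12\Delta_{\mathbb{S}},\tfrac12\Delta_{\mathbb{S}}]$, proves the projection is such a map (\reflem{apprmap}), and then invokes a generic induction (\refthm{ball}); this machinery is reused for the uncertain-resource results in \refsec{uncertain}. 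Your direct three-case induction is more elementary and self-contained, and in the overshoot case you actually establish the slightly sharper fact $|e^P_k|\le|e^P_{k-1}|$ rather than merely $|e^P_k|\le\tfrac12\Delta_{\mathbb{S}}$, whereas the paper's version only records that $z_k$ stays within $\tfrac12\Delta_{\mathbb{S}}$ of the nearest endpoint.
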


For a discrete-resource agent that implements setpoints according to \refthm{discreteres},
it follows immediately from Proposition \ref{lem:track} that
\[
|P^{\textrm{imp}}_k - P^\text{req}_k | \leq \Delta_\mathbb{S}.
\]
For a special case that we define below, we have a slightly stronger result 
that proves optimality, in some sense, of our construction.

\subsection{Accuracy of Setpoint Implementation}


\begin{definition}
  Let $\mcal{S} \subset \reals$ be as in Definition \ref{def:max_step}.
    We say that \mcal{S} is \term{uniform} if
  $\Delta_{\mcal{S}}= s_{i+1} - s_i$ for all $i\in [|\mcal{S}|-1  ]$. Similarly, we say that a collection $\mathbb{S}\defas \{ \mcal{S}_i \}_{i \in [n]}$ is \term{uniform} if
  $\mcal{S}_i$ is uniform and $\Delta_{\mcal{S}_i}=\Delta_{\mathbb{S}}$ for all $i\in [n]$.
\end{definition}

In addition, we require the following definition of a special projection operator. 
\begin{definition} 
  Fix $d\in \natnum_{>0}$.
  For any non-empty closed set $\mcal{S} \subset \reals^d$ we define
\[
  \mathrm{proj}^y_\mcal{S}(x) := \arg\min_{\rho
  \in \mcal{T}(\mcal{S},x)} \| \rho - y \|  \qquad x,y\in \reals^d
\]
where
\[
  \mcal{T}(\mcal{S},x):= \arg\min \{ \| \sigma - x  \| : \sigma \in \mcal{S} \}
\]
%
%
%
%
%
\label{def:specialproj}
\end{definition}

Informally speaking, this projection operator is such that, in case the cardinality of $\mcal{T}(\mcal{S},x)$ 
is strictly larger than one
(which happens when the argument $x$ has exactly the same distance to multiple points in \mcal{S}),
it chooses the point in $\mcal{T}(\mcal{S},x)$ that is closest to some given point $y$.

\begin{thm} \label{thm:accuracy}
Consider the setting of \refthm{discreteres}. Assume in addition that: (i) $\mathbb{S}$ is uniform and $\Delta_{\mathbb{S}}>0$, and (ii)
a resource agent implements $$P^{\textrm{imp}}_k =
\mathrm{proj}^{P^\text{req}_k}_{\mcal{S}_{i_k}}(P^\text{req}_k - e^P_{k-1})$$ for every $k\in \posint$, where the projection operator is given by Definition \ref{def:specialproj}. 
Then, the RA has $\tfrac12\Delta_{\mathbb{S}}$-bounded accumulated-error, and, additionally, the following properties hold:
\begin{enumerate}
  \item [(i)] $|  P^{\textrm{imp}}_k  - P^{\textrm{req}}_k | < \Delta_{\mathbb{S}}$ holds for every $k$ (with strict inequality), and this is the best possible bound in the following sense: For any algorithm that achieves the bounded accumulated-error property and any $\varepsilon > 0$, there exists a sequence $(P^\text{req}_k)$ and a sequence $(i_k)$ such that $|P^{\textrm{imp}}_{k_{\circ}}  - P^{\textrm{req}}_{k_{\circ}} | \geq \Delta_{\mathbb{S}} - \varepsilon$ for some $k_\circ$.
  \item[(ii)] If $P^{\textrm{req}}_{k_\circ} \in \mcal{S}_{i_{k_\circ}}$ holds for some $k_\circ$ (in words: if the request is implementable itself), then
  $P^{\textrm{imp}}_{k_\circ} = P^{\textrm{req}}_{k_\circ}$, regardless of the value of the accumulated error $e_{k_{\circ}-1}$.
\end{enumerate}

\end{thm}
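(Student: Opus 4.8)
The plan is to reduce the whole statement to the analysis of a single step of the error-feedback recursion, after which only a short case analysis on an arithmetic progression remains. The first observation I would make is that, for each fixed superscript, the operator $\mathrm{proj}^{y}_{\mcal{S}}$ of \refdef{specialproj} is a bona fide projection operator in the sense of \refsec{prelim}: it always outputs a point of $\mcal{S}$ at minimum distance to its argument, and merely resolves ties in a prescribed way. Therefore \refthm{discreteres} applies unchanged and already delivers the $\tfrac12\Delta_{\mathbb{S}}$-bounded accumulated-error claim; equally important, it gives me the bound $|e^{P}_{k-1}|\le\tfrac12\Delta_{\mathbb{S}}$ for every $k\ge 1$, which is the only fact about the accumulated error I will use in (i) and (ii). From here I fix a step $k$ and abbreviate $\Delta\defas\Delta_{\mathbb{S}}$, $\mcal{S}\defas\mcal{S}_{i_k}$, $x\defas P^{\text{req}}_k$, $e\defas e^{P}_{k-1}$, $y\defas P^{\textrm{imp}}_k=\mathrm{proj}^{x}_{\mcal{S}}(x-e)$; by uniformity, $\mcal{S}$ is an arithmetic progression of spacing $\Delta$ and $\conv\mcal{S}=[\min\mcal{S},\max\mcal{S}]$, which contains $x$.

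For the strict inequality $|y-x|<\Delta$ in (i), I would split on the location of $x-e$. If $x-e$ lies outside $[\min\mcal{S},\max\mcal{S}]$, the projection clips to the nearer endpoint, say $\min\mcal{S}$; but then $x-e<\min\mcal{S}\le x$ forces $e>0$, hence $0\le x-\min\mcal{S}<e\le\tfrac12\Delta$, giving $|y-x|<\tfrac12\Delta$. If $x-e$ lies in $[\min\mcal{S},\max\mcal{S}]$ but is not the midpoint of its grid cell, its nearest grid point is at distance $<\tfrac12\Delta$, so $|y-x|\le|y-(x-e)|+|e|<\Delta$. The only remaining, and the only ``tight'', case is when $x-e$ is exactly the midpoint of a cell $[a,a+\Delta]$: then $\mcal{T}(\mcal{S},x-e)=\{a,a+\Delta\}$, but since $|e|\le\tfrac12\Delta$ we have $x=(x-e)+e\in[a,a+\Delta]$, so the special projection, which picks from $\{a,a+\Delta\}$ the point closest to $x$, returns a $y$ with $|y-x|\le\tfrac12\Delta<\Delta$.

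For the optimality half of (i), I would argue against an arbitrary algorithm having $c$-bounded accumulated-error, for a given $\varepsilon>0$, by a ``starvation'' argument. Using uniformity of $\mathbb{S}$, pick an index $i_0$ with $\mcal{S}_{i_0}$ uniform of spacing $\Delta$ (so $|\mcal{S}_{i_0}|\ge 2$), set $i_k\equiv i_0$, and request the constant value $\min\mcal{S}_{i_0}+\delta$ with $\delta\defas\min\{\varepsilon,\Delta/2\}\in(0,\Delta)$, which lies in $\conv\mcal{S}_{i_0}$. If the algorithm implemented $\min\mcal{S}_{i_0}$ at every step, the accumulated error would be $-k\delta\to-\infty$, contradicting its boundedness; hence at some step $k_\circ$ it must implement a value $\ge\min\mcal{S}_{i_0}+\Delta$, and then $|P^{\textrm{imp}}_{k_\circ}-P^{\text{req}}_{k_\circ}|\ge\Delta-\delta\ge\Delta-\varepsilon$. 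Part (ii) is then a direct computation: if $x=s\in\mcal{S}$, then any other grid point $s+m\Delta$ ($m\ne 0$) satisfies $|s+m\Delta-(s-e)|\ge\Delta-|e|\ge\tfrac12\Delta\ge|e|=|s-(s-e)|$, so $s$ is a nearest point of $\mcal{S}$ to $s-e$; equality can hold only if $|m|=1$ and $|e|=\tfrac12\Delta$, in which case the rival point $s\mp\Delta$ sits at distance $\Delta$ from $x=s$, so the tie-break towards $x$ again selects $s$, i.e.\ $P^{\textrm{imp}}_{k_\circ}=P^{\text{req}}_{k_\circ}$ irrespective of $e^{P}_{k_\circ-1}$.

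I expect the main obstacle to be the midpoint/tie case in part (i): one must ensure that the combination ``$x-e$ at a cell midpoint together with $|e|$ at its extreme value $\tfrac12\Delta$'' does not push $|y-x|$ all the way to $\Delta$, and this is precisely where the tie-breaking rule of $\mathrm{proj}^{P^{\text{req}}_k}_{\mcal{S}}$ (rather than an arbitrary projection) is essential --- a careless projection could land on the far endpoint and achieve $|y-x|=\Delta$. The boundary (clipping) case needs a separate but easy treatment, and everything else is routine bookkeeping on an arithmetic progression.
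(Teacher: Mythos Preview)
Your argument is correct and rests on the same two pillars as the paper's proof: the bound $|e^{P}_{k-1}|\le\tfrac12\Delta_{\mathbb{S}}$ inherited from \refthm{discreteres}, and a constant-request starvation argument for optimality. The organization differs slightly. For the strict inequality in (i), the paper splits on $|e^{P}_{k-1}|$ rather than on the position of $x-e$: if $|e^{P}_{k-1}|<\tfrac12\Delta_{\mathbb{S}}$, the triangle inequality gives $|y-x|<\Delta_{\mathbb{S}}$ for \emph{any} projection; the only boundary scenario is $|e^{P}_{k-1}|=\tfrac12\Delta_{\mathbb{S}}$ together with $P^{\text{req}}_k\in\mcal{S}_{i_k}$, where the tie-break forces $y=x$. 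Your three-way split on $x-e$ reaches the same conclusion with a bit more bookkeeping but is more explicit about the clipping case. For (ii), the paper takes a shortcut you miss: once (i) is established, uniformity immediately gives (ii), since $P^{\text{req}}_{k_\circ}\in\mcal{S}_{i_{k_\circ}}$ and $|P^{\text{imp}}_{k_\circ}-P^{\text{req}}_{k_\circ}|<\Delta_{\mathbb{S}}$ force $P^{\text{imp}}_{k_\circ}=P^{\text{req}}_{k_\circ}$. Your direct computation for (ii) is fine but redundant given (i). Your optimality argument, with $\delta=\min\{\varepsilon,\Delta/2\}$, is essentially the paper's (which uses Proposition~\ref{lem:track} to phrase the same contradiction via averages), and your truncation of $\varepsilon$ cleanly guarantees the request stays in $\conv\mcal{S}_{i_0}$.
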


\begin{proof}
Observe that by Theorem \ref{thm:discreteres}, $\l\| e_{k-1} \r\| \leq \tfrac12 \Delta_{\mathbb{S}}$ for every $k$. It is clear that if $\l\| e_{k-1} \r\| < \tfrac12 \Delta_{\mathbb{S}}$,
\[
\l|P^{\textrm{req}}_k - P^{\textrm{imp}}_k \r| = \l|P^{\textrm{req}}_k - \mathrm{proj}_{\mcal{S}_{i_k}}(P^\text{req}_k - e^P_{k-1}) \r| < \Delta_{\mathbb{S}}
\]
for \emph{any} projection operator. The boundary case is when $\l\| e_{k-1} \r\| = \tfrac12 \Delta_{\mathbb{S}}$ and $P^\text{req}_k \in \mcal{S}_{i_k}$. In such a case, as the ties are broken towards $P^\text{req}_k$ when using the operator of Definition \ref{def:specialproj}, it holds that $\big|P^{\textrm{req}}_k - P^{\textrm{imp}}_k \big| = 0$. Hence, $\big|P^{\textrm{req}}_k - P^{\textrm{imp}}_k \big|  < \Delta_{\mathbb{S}}$ for any $k$.

To conclude the proof of property (i), consider any algorithm that achieves the bounded accumulated-error property, and set $\varepsilon > 0$.
Let $i^*\in [n]$ be arbitrary and let $i_k:=i^*$ for every $k\in\posint$. Let $P_1, P_2 \in \mcal{S}_{i^*}$ such that $P_2 = P_1 + \Delta_{\mathbb{S}}$. (As $\Delta_{\mathbb{S}} > 0$, such choice is guaranteed to exist.)
Let $P^{\textrm{req}}_k := P_1 + \varepsilon$ for all $k\in \posint$. 
By the assumption on the algorithm, we have that
\[
\lim_{K \rightarrow \infty} \frac 1 K \sum_{k = 1}^K P^{\textrm{imp}}_k = P_1 +  \varepsilon.
\]
Hence, there exists $k_\circ$ such that $P^{\textrm{imp}}_{k_\circ} \geq P_2$, and $P^{\textrm{imp}}_{k_\circ} - P^{\textrm{req}}_{k_\circ} \geq P_2 - P_1 - \varepsilon =  \Delta_{\mathbb{S}} - \varepsilon$. 

Property (ii) follows then directly from property (i):  If $P^{\textrm{req}}_{k_\circ} \in \mcal{S}_{i_{k_\circ}}$ holds for some $k_\circ$, and $|  P^{\textrm{imp}}_{k_\circ}  - P^{\textrm{req}}_{k_\circ} | < \Delta_{\mathbb{S}}$, then by definition of the stepsize of $\mcal{S}_{i_k}$ in the uniform case,  we have that $P^{\textrm{imp}}_{k_\circ} = P^{\textrm{req}}_{k_\circ}$.
\end{proof}

\subsection{Example: Resource Agent for Heating a Building} \label{sec:building}

\newcommand{\Ttgt}{\ensuremath{T_\textnormal{target}}\xspace}
\newcommand{\Tmin}{\ensuremath{T_\textnormal{min}}\xspace}
\newcommand{\Tmax}{\ensuremath{T_\textnormal{max}}\xspace}
\newcommand{\Pheat}{\ensuremath{P_\textnormal{heat}}\xspace}

In this section, we present a concrete resource-agent example: we will design a resource agent for managing the temperature in a building with several rooms.
The reason for showing this example is twofold.
First, we wish to give a concrete example of a resource agent that
controls a load that can only
implement power setpoints from a discrete set. Second, the resource-agent design shows a concrete usage example of the \cl framework,
and might serve as a basis for an actual resource-agent implementation.

The heating system's objective is 
to keep the rooms' temperatures within a certain range.
For rooms whose temperature lies in that range, there is some freedom in the choice of the control actions related to  those rooms.
The resource agent's job is to monitor the building and spot such degrees of freedom, and expose them to the grid agent, which can then exploit those for performing \emph{Demand Response}.

Our example is inspired by \cite{costanzo}, which also considers the problem of controlling the temperature in the rooms of a building using multiple heaters.
We address two issues that were not addressed in \cite{costanzo}:
\begin{enumerate}
\item We show 
  that by rounding requested setpoints into implementable setpoints using \eqref{eq:asm_imp} (where $F_k$ is a suitably defined quantizer) 
  we obtain a resource agent with bounded accumulated-error.

%
%
%
%
%
\item We prevent the heaters from switching on and off with the same frequency as \cl's control frequency, which is crucial in an actual implementation. 
\end{enumerate}

\subsubsection{Simple Case: a Single Heater}
For simplicity, we first analyze a scenario with only one heater.
The main aspects of our proposed design (as mentioned above) are in fact independent of the number of heaters, and we think that those aspects are more easily understood in this simple case.
We will generalize our example to an arbitrary number of heaters in \refsec{moreheaters}.


\paragraph{Model and Intended Behavior}
We model the heater as a purely resistive load (it does not consume reactive power)
that can be either active (``on'') or inactive (``off''). It consumes $P_\text{heat}>0$ Watts while being active, and zero Watts while being inactive.

From the perspective of the resource agent, 
the heater has a \emph{state} that consists of two binary variables:
$s_k \in \{0,1\}$, which corresponds to whether the heater is on ($s_k=1$) or off ($s_k=0$), and
$\ell_k \in \{0,1\}$ indicates whether the heater is ``locked'', in which case we cannot switch on or switch off the heater.
Formally, if $\ell_k = 1$ then $s_{k+1} = s_{k}$ necessarily holds.
Hence, $\ell_k$ exposes a physical constraint of the heater, namely that it cannot (or should not) be switched on and off with arbitrarily high frequency.
In the typical case where the minimum switching period of the heater is (much) larger than the \cl's control period ($\approx 100$ ms), the heater will ``lock'' immediately after a switch, i.e., assuming $\ell_k=0$, setting $s_{k+1}$ such that $s_{k+1}\neq s_k$ will induce
$\ell_{k'} = 1$ for every $k' \in [k+1, k + K]$, after which $\ell_{k + K+1} = 0$. Here, $K\in \natnum$ represents the minimum number of timesteps  for which the heater cannot change its state from on to off or \emph{vice versa}.

Suppose that the heater is placed in a room, and that the temperature of this room is a scalar quantity. (We do not aim here to model heat convection through the room or anything like that.)
The temperature in the room, denoted as $T_k$, should remain within predefined ``comfort'' bounds,
\begin{align*}
T_k \in [ \Tmin,\Tmax] \quad k=1,2,\ldots
\end{align*}
where $\Tmin,\Tmax \in \reals$.
If $T_k$ is outside this interval (and only if $\ell_k=0$), then the resource agent should take the trivial action, i.e.,
ensure that the heater is active if $T_k < \Tmin$, and
 inactive if $T_k > \Tmax$.
The more interesting case is 
if $T_k$ lies in $[\Tmin,\Tmax]$ (again, provided that $\ell_k=0$), as this gives rise to some \emph{flexibility} in the heating system:
the degree of freedom here is whether to switch the heater on or off, which obviously directly corresponds to the
total power consumed by the heating system.
The goal is to
delegate this choice to 
the grid agent, 
which we can accomplish by defining an appropriate \cl advertisement.
\paragraph{Defining the Advertisement and the Rounding Behavior} 
\label{sec:radef}
Let the discrete set of implementable real-power setpoints 
at time $k$ be defined as
\[
  \mcal{I}_k\defas  \begin{cases}
  \{ 0 \} & \text{if }(\ell_k=0 \land T_k >\Tmax) \lor \\
          & \phantom{\text{if }}( \ell_k = 1 \land s_k=0),\\
    \{-P_\textnormal{heat},0\} &\text{if }\phantom{(}\ell_k=0 \land \Tmin\leq T_k \leq\Tmax, \\
 \{ -P_\textnormal{heat} \} & \text{if } (\ell_k=0 \land T_k <\Tmin) \lor \\
                           & \phantom{\text{if }}( \ell_k = 1 \land s_k=1),
  \end{cases}
\]
where $\land$ and $\lor$ stand for ``and'' and ``or'', respectively.
Note that $\mcal{I}_k$ only contains non-positive numbers, 
by the convention in \cl that \emph{consuming} real power corresponds to \emph{negative} values for $P$.
We define the \pqprof as in \refthm{discreteres}, with $\mcal{S}_{i_k}$ (as appearing in \refthm{discreteres}) equal to $\mcal{I}_k$, i.e., 
  \[
  \mcal{A}_k\defas  \conv(\mcal{I}_k \times \{0\}) \subset \reals^2, \quad \forall k\in \posint.
\]
We adopt \eqref{eq:discreterealrule} as the rule to compute $u_k^\text{imp}$.
As can be seen from \eqref{eq:discreterealrule}, the relation between the implemented setpoint $u^\text{imp}_k$ and $u^\text{req}_k$ is deterministic.
%
We can expose this relation to the grid agent by means of a \belieffunc (valid for timestep $k$), 
\begin{align*}
 \text{BF}_k:\quad \mcal{A}_k &\rightarrow \powerset{\reals^2} \\
  (p,q) & \mapsto \{ ( \mathrm{proj}_{\mcal{I}_k}( p - e^P_{k-1}), 0) \}.
\end{align*}

We leave the choice of a cost function to the designer of an actual resource agent, because: a) our theorems and the above resource-agent design are independent of this choice, and b) such choice typically depends on scenario-specific details.

\paragraph{Upper Bound on the Accumulated Error}
From \refthm{discreteres} we immediately get the following corollary.

\begin{corollary}
  The single-heater resource agent as defined in \refsec{radef} has 
  $\tfrac12 \Pheat$-bounded accumulated-error.
  \label{cor:sing}
\end{corollary}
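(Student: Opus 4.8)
The plan is to apply Theorem~\ref{thm:discreteres} with the right choice of collection $\mathbb{S}$ and then compute its maximum stepsize. First I would observe that the single-heater resource agent of Section~\ref{sec:radef} fits the template of Theorem~\ref{thm:discreteres}: at each timestep $k$ the set of implementable real-power setpoints $\mcal{I}_k$ is one of the three finite subsets of $\reals$ appearing in the case distinction, namely $\{0\}$, $\{-P_\textnormal{heat},0\}$, or $\{-P_\textnormal{heat}\}$; the \pqprof is defined as $\mcal{A}_k = \conv(\mcal{I}_k \times \{0\})$, so (projecting to the $P$-coordinate, which is the only nontrivial one since everything lives on $Q=0$) the request satisfies $P^\text{req}_k \in \conv(\mcal{I}_k)$; and the rounding rule adopted for the agent is exactly \eqref{eq:discreterealrule}.

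Next I would set $\mathbb{S} \defas \{\{0\},\{-P_\textnormal{heat},0\},\{-P_\textnormal{heat}\}\}$, a finite non-empty collection of finite non-empty subsets of $\reals$, and choose the index sequence $(i_k)$ so that $\mcal{S}_{i_k} = \mcal{I}_k$ for every $k$ — this is possible since $\mcal{I}_k$ always equals one of the three members of $\mathbb{S}$. Then I would compute the maximum stepsizes: by Definition~\ref{def:max_step}, $\Delta_{\{0\}} = 0$ and $\Delta_{\{-P_\textnormal{heat}\}} = 0$ (singletons), while $\Delta_{\{-P_\textnormal{heat},0\}} = 0 - (-P_\textnormal{heat}) = P_\textnormal{heat}$. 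Hence by Definition~\ref{def:max_step_coll}, $\Delta_{\mathbb{S}} = \max\{0,P_\textnormal{heat},0\} = P_\textnormal{heat}$.

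Finally I would invoke Theorem~\ref{thm:discreteres} directly: since all its hypotheses are met, the resource agent has $\tfrac12\Delta_{\mathbb{S}}$-bounded accumulated-error, i.e., $\tfrac12 P_\textnormal{heat}$-bounded accumulated-error, which is the claim. There is essentially no obstacle here — the corollary is a pure instantiation of the theorem, and the only thing to be careful about is the bookkeeping that ties $\mcal{I}_k$ to a member of $\mathbb{S}$ via the index sequence $(i_k)$, and the observation that the reactive component plays no role so that the two-dimensional accumulated-error vector $e_k = (e^P_k, 0)$ has norm $|e^P_k|$, making the scalar bound from Theorem~\ref{thm:discreteres} coincide with the vector bound in Definition~\ref{def:bounded_error}.
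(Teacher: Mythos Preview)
Your proposal is correct and follows exactly the approach of the paper, which simply states that the corollary follows immediately from \refthm{discreteres}. You have spelled out the instantiation (identifying $\mathbb{S}$, computing $\Delta_{\mathbb{S}}=P_\textnormal{heat}$, and noting the $Q$-coordinate is trivial) more explicitly than the paper does, but there is no substantive difference.
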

\begin{remark}
  The bound given in \refcor{sing} 
  is tight in the sense that
  we can construct hypothetical cases 
  in our single-heater scenario for which $|e_k|=|e^P_k|= \tfrac12\Pheat$ for some $k$.
  For example,
  take $k=1$, suppose that $\Tmin \leq T_1 \leq \Tmax$ so that $\mcal{I}_1 = \{ -\Pheat, 0\}$ and let $\Preq_1 =-\tfrac12 \Pheat $. It then follows that $\Pimp_1= \mathrm{proj}_{\mcal{I}_1} (\Preq_1- e^P_0)=\mathrm{proj}_{\mcal{I}_1} (\Preq_1)  \in \mcal{I}_1$,
  which gives $|e^P_1| =| \Pimp_1 - \Preq_1| = \tfrac12 \Pheat$.
\end{remark}

\subsubsection{General Case: an Arbitrary Number of Heaters}
\label{sec:moreheaters}
Here, we extend the single-heater case to a setting with $r$ heaters, for $r\in \natnum, r\geq 1$ arbitrary. 
As we will see, also this multi-heater case can be analyzed using the tools introduced at the beginning of \refsec{drr}.

Like in the single-heater case, we assume that each heater is purely resistive. We furthermore assume that heater $i$ consumes $P_i^\text{heat}$ Watts of power when active (and zero power when inactive), for every $i\in [r]$.
Also similarly to the single-heater case, we assume that each heater is placed in a separate room, whose (scalar) temperature is denoted as $T_k^{(i)}$.
Not surprisingly, our objective
shall now be to keep the temperature in each room within the predefined comfort bounds,
i.e.,
\begin{align*}
  T_k^{(i)} \in [ \Tmin,\Tmax] \quad \forall k\in \posint, \forall i \in [r].§
\end{align*}

In the one-heater case, the only degree of freedom that can be present is the choice to switch that heater on or off.
In case of multiple heaters, there is potentially some freedom in
choosing which subset of the heaters to activate,
and note that there will typically\footnote{Provided that not too many heaters are locked.} be an exponential number of those subsets (exponential in the number of heaters).
Each subset corresponds to a certain total power consumption, i.e., a power setpoint. 
As in the single-heater case,
the \pqprof will be defined as the convex hull of the collection of these setpoints.
When the grid agent requests some setpoint from the \pqprof, the resource agent has to select an appropriate subset whose corresponding setpoint is closest (in the Euclidean sense) to the requested setpoint.
Note that there can be several subsets of heaters that correspond to the same setpoint.
A simple method to resolve this ambiguity would be, for example, to choose the subset consisting of the coldest rooms, however, as this topic is beyond the scope of this work, we leave the choice of such a selection method to the resource-agent designer.

\paragraph{Characterizing the Set of Implementable Setpoints}
When going from the single-heater setting to a multiple-heaters scenario,
we merely need to re-define $\mcal{I}_k$, which we will name  
$\widetilde{\mcal{I}}_k$ here to avoid confusion with the single-heater case. 
The definition of the \pqprof, \belieffunc, and rule for computing $u_k^\text{imp}$ given in \refsec{radef}
also apply to the multi-heater case, provided that
all occurrences of $\mcal{I}_k$ in those definitions are replaced by $\widetilde{\mcal{I}}_k$.




For every $i\in [r]$, let $s_k^{(i)}$ and $\ell_k^{(i)}$ represent the state variables $s_k$ and $\ell_k$ (as defined in the single-heater case) for the $i$-th heater. Let $\mcal{L}_k \defas  \{ i \in [r]: \ell_k^{(i)} = 1 \}$ denote the set of rooms whose heater is locked at timestep $k$. Furthermore, let $\mcal{C}_k\defas  \{ i \in [r]: T_k^{(i)} < \Tmin \}$
and $\mcal{W}_k\defas  \{ i \in [r]: \Tmin \leq T^{(i)}_k \leq \Tmax \} $. 
Informally speaking, $\mcal{C}_k$ contains the rooms that are ``too cold'', and $\mcal{W}_k$ the rooms whose temperatures are within the comfort bounds.

If $A \subseteq[r]$, we write $\overline{A}$ for the complement
with respect to $[r]$, i.e. $\overline{A}\defas [r]\setminus A$.

Let
\begin{equation} \label{eqn:B_k}
  \widetilde{\mcal{I}}_k : = \big\{ a_k - \sum_{i \in \mcal{S} }  P_i^\textnormal{heat}  : \mcal{S} \subseteq \overline{\mcal{L}_k}\cap\mcal{W}_k \big\}
\end{equation}
represent the set of implementable (active) power setpoints, with
\[
a_k \defas  -\sum_{i \in \mcal{L}_k} s_k^{(i)} P_i^\textnormal{heat} - \sum_{j \in \overline{\mcal{L}_k} \cap \mcal{C}_k } P_j^\textnormal{heat}.
\]

\newcommand{\tS}{{\widetilde{\mathbb{S}}}}
\paragraph{Upper Bound on the Accumulated Error}
As in the one-heater example, we use \refthm{discreteres} to bound the accumulated error of the resource agent.
To this end, let $\tS$ denote the collection of all possible sets
$\widetilde{\mcal{I}}_k$ \eqref{eqn:B_k}. It is easy to see that this is a finite collection. Further, the maximum stepsize of this collection (\refdef{collstep}) is given by $\Delta_\tS = \max_{i \in [r]} P_i^\textnormal{heat}$. 
This gives us the following corollary.

\begin{corollary}
  The multiple-heaters resource agent has $(\tfrac12 \max_{i \in [r]} P_i^\textnormal{heat})$-bounded accumulated-error. 
\end{corollary}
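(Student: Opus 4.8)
The plan is to reduce the statement to a direct application of \refthm{discreteres}, taking for the collection $\mathbb{S}$ in that theorem the family $\tS$ of all sets of the form $\widetilde{\mcal{I}}_k$ given by \refeq{B_k}. First I would verify the hypotheses of \refthm{discreteres}: that $\tS$ is a finite, non-empty collection whose members are finite, non-empty subsets of $\reals$; that every requested real-power setpoint lies in $\conv(\widetilde{\mcal{I}}_k)$; and that the resource agent computes its implemented setpoint by the rule \refeq{discreterealrule}. Each $\widetilde{\mcal{I}}_k$ is indexed by the subsets of the finite index set $\overline{\mcal{L}_k}\cap\mcal{W}_k \subseteq [r]$, hence it is finite and contains at least $a_k$ (taking $\mcal{S} = \emptyset$); moreover $\widetilde{\mcal{I}}_k$ is completely determined by the configuration $(\mcal{L}_k,\mcal{C}_k,\mcal{W}_k)$ together with the on/off states of the locked heaters, of which there are only finitely many, so $\tS$ is finite. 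The remaining two points hold by construction of the multi-heater advertisement and implementation rule in \refsec{moreheaters}: the \pqprof is $\mcal{A}_k = \conv(\widetilde{\mcal{I}}_k \times \{0\})$, so $u^\text{req}_k \in \mcal{A}_k$ forces $P^\text{req}_k \in \conv(\widetilde{\mcal{I}}_k)$, and the implemented setpoint is $\mathrm{proj}_{\widetilde{\mcal{I}}_k}(P^\text{req}_k - e^P_{k-1})$, i.e.\ \refeq{discreterealrule} with $\mcal{S}_{i_k} = \widetilde{\mcal{I}}_k$.

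The substantive step is to determine the maximum stepsize $\Delta_\tS$. Fix $k$ and write $J_k := \overline{\mcal{L}_k}\cap\mcal{W}_k$. Then $\widetilde{\mcal{I}}_k = a_k - \Sigma_k$, where $\Sigma_k := \{\sum_{i\in\mcal{S}} P_i^\textnormal{heat} : \mcal{S}\subseteq J_k\}$ is a set of subset-sums; since negation and translation by the constant $a_k$ do not change consecutive differences, $\Delta_{\widetilde{\mcal{I}}_k} = \Delta_{\Sigma_k}$. The key claim is: for any finite family of positive reals, the set of its subset-sums has maximum stepsize at most the largest member of the family. Indeed, take any non-maximal element $v = \sum_{i\in\mcal{S}} P_i^\textnormal{heat} \in \Sigma_k$; then $\mcal{S} \subsetneq J_k$, so choose $i^\ast \in J_k \setminus \mcal{S}$ and note that $v + P_{i^\ast}^\textnormal{heat} \in \Sigma_k$ with $v < v + P_{i^\ast}^\textnormal{heat} \le v + \max_{i\in J_k} P_i^\textnormal{heat}$; hence the successor of $v$ in the sorted listing of $\Sigma_k$ lies within $\max_{i\in J_k} P_i^\textnormal{heat} \le \max_{i\in[r]} P_i^\textnormal{heat}$ of $v$ (a still closer element may exist, but that only shrinks the gap). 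If $J_k = \emptyset$ then $\Sigma_k = \{0\}$ has stepsize $0$. This gives $\Delta_\tS \le \max_{i\in[r]} P_i^\textnormal{heat}$, and equality is attained by an admissible configuration: lock every heater except the one, say $j$, with $P_j^\textnormal{heat} = \max_{i\in[r]} P_i^\textnormal{heat}$, and place its room inside the comfort band $[\Tmin,\Tmax]$; then $\widetilde{\mcal{I}}_k = \{a_k, a_k - P_j^\textnormal{heat}\}$ has stepsize exactly $\max_{i\in[r]} P_i^\textnormal{heat}$.

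Finally, \refthm{discreteres} applied to the collection $\tS$ yields that the resource agent has $\tfrac12 \Delta_\tS$-bounded accumulated-error, i.e.\ $(\tfrac12 \max_{i\in[r]} P_i^\textnormal{heat})$-bounded accumulated-error, as claimed. I expect the only real obstacle to be the subset-sum stepsize bound in the middle paragraph; the rest is bookkeeping — checking finiteness of the configuration space and matching notation with \refthm{discreteres}. A minor subtlety worth stating explicitly is that the sets $\widetilde{\mcal{I}}_k$ vary with $k$ (as heaters lock/unlock and temperatures drift across the comfort band), which is precisely why \refthm{discreteres} is formulated for an arbitrary sequence $(i_k)$ of index sets drawn from a fixed finite collection, rather than for a single fixed set.
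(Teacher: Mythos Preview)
Your proposal is correct and follows exactly the paper's approach: apply \refthm{discreteres} to the collection $\tS$ of all possible sets $\widetilde{\mcal{I}}_k$ and compute $\Delta_{\tS}=\max_{i\in[r]}P_i^{\textnormal{heat}}$. The paper merely asserts the finiteness of $\tS$ and the value of $\Delta_{\tS}$ without justification, whereas you supply a clean subset-sum argument (for any non-maximal $v\in\Sigma_k$, adding a missing $P_{i^\ast}^{\textnormal{heat}}$ exhibits a larger element within $\max_i P_i^{\textnormal{heat}}$) and an explicit configuration witnessing equality; this is genuine added value, not a different route.
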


\section{Uncertain Resources}
\label{sec:uncertain}
In this section, we show how to achieve boundedness of the accumulated error for
\emph{uncertain} resources, as given in Definition \ref{def:res_classes}.
In particular, we focus on resources that are affected by Nature, and hence the relation between the advertised $PQ$ profile and the set of implementable setpoints is uncertain. This covers such resources as PV panels, wind farms, and partially controllable loads.

\subsection{Key Property: Projection-Translation Invariance}
\label{sec:kppti}
In order to state our result, we first introduce and important definition 
that we use in the following.

\begin{definition}[Projection-Translation Invariance] \label{def:invar}
Fix $d\in \natnum$. Let $\Dl \subseteq \reals^d$ be a given convex compact set. We say that a convex set $\Il \subseteq \reals^d$ is a \emph{projection-translation invariant} subset of $\Dl$ if $\Il \subseteq \Dl$ and for every $v \in \Dl$ and $u \in \Il$, it holds that
\[
u + v - \mathrm{proj}_{\Il}(v) \in \Dl.
\]
\label{def:tranprop}
\end{definition}

A helpful interpretation of \refdef{invar}  is to view $\tau:=v - \mathrm{proj}_{\Il}(v)$ as a translation vector in the projection direction of $v$ to $\Il$. Then, projection-translation invariance guarantees that the translation of \mcal{I} over $\tau$ remains contained in \mcal{D}.

It is easy to see that in the one dimensional case ($d = 1$), projection-translation invariance is satisfied for intervals.
\begin{proposition} \label{prop:pti1d}
For any $a \leq c \leq d \leq b$, the interval $\Il = [c, d]$ is a projection-translation invariant subset of the interval $\Dl = [a, b]$.
\end{proposition}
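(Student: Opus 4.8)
The plan is to verify Definition \ref{def:invar} directly for $d=1$, $\Il=[c,d]$, $\Dl=[a,b]$ with $a\le c\le d\le b$. Take an arbitrary $v\in\Dl=[a,b]$ and an arbitrary $u\in\Il=[c,d]$; I must show that $u+v-\mathrm{proj}_{[c,d]}(v)\in[a,b]$. The natural approach is to compute $\mathrm{proj}_{[c,d]}(v)$ explicitly by a case split on the position of $v$ relative to the interval $[c,d]$, and in each case bound the translated point $u+v-\mathrm{proj}_{[c,d]}(v)$ from above by $b$ and from below by $a$.

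First I would record that for the closed interval $[c,d]\subset\reals$ the projection is the clamp map: $\mathrm{proj}_{[c,d]}(v)=c$ if $v<c$, $\mathrm{proj}_{[c,d]}(v)=v$ if $c\le v\le d$, and $\mathrm{proj}_{[c,d]}(v)=d$ if $v>d$. In the middle case the translation vector $\tau:=v-\mathrm{proj}_{[c,d]}(v)$ is zero, so $u+\tau=u\in[c,d]\subseteq[a,b]$ and there is nothing to prove. In the case $v<c$ we have $\tau=v-c\le 0$, so $u+\tau=u+v-c\le d+v-c$; since $v<c$ this is $<d\le b$, and for the lower bound $u+v-c\ge c+v-c=v\ge a$. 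Symmetrically, in the case $v>d$ we have $\tau=v-d\ge0$, so $u+\tau=u+v-d\ge c+v-d> c\ge a$ for the lower bound, and $u+\tau\le d+v-d=v\le b$ for the upper bound. In every case $u+v-\mathrm{proj}_{[c,d]}(v)\in[a,b]=\Dl$, which is exactly the required containment; together with the trivial inclusion $\Il\subseteq\Dl$, this establishes that $[c,d]$ is a projection-translation invariant subset of $[a,b]$.

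There is essentially no obstacle here: the only mild subtlety is that the projection operator onto a one-dimensional interval is single-valued (so the statement is unambiguous), and that one must be slightly careful to keep the inequalities in the right direction in the two outer cases — but the sign of $\tau$ is determined by which side of $[c,d]$ the point $v$ lies on, which makes the bookkeeping routine. I would present it as the short three-case argument above.
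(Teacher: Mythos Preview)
Your proposal is correct and follows essentially the same three-case argument as the paper: split on whether $v$ lies in $[c,d]$, to its left, or to its right, and in each outer case bound $u+v-\mathrm{proj}_{[c,d]}(v)$ above and below using the obvious inequalities. Your bounds in the outer cases are in fact slightly sharper than the paper's (you exploit $v>d$ or $v<c$ directly rather than only $v\in[a,b]$), but the structure is identical.
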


\begin{proof}
Let $v \in \Dl$ and $u \in \Il$. If $v \in \Il$, trivially $u + v - \mathrm{proj}_{\Il}(v) = u \in \Dl$. Now, if $v > d$, we have that
\[
u + v - \mathrm{proj}_{\Il}(v) \leq d + b - d = b, \text{ and}
\]
\[
u + v - \mathrm{proj}_{\Il}(v) \geq c + a - d \geq a,
\]
namely $u + v - \mathrm{proj}_{\Il}(v) \in \Dl$. Similarly, if $v < c$, it holds that
\[
u + v - \mathrm{proj}_{\Il}(v) \leq d + b - c \leq b, \text{ and}
\]
\[
u + v - \mathrm{proj}_{\Il}(v) \geq c + a - c = a,
\]
namely $u + v - \mathrm{proj}_{\Il}(v) \in \Dl$.
\end{proof}
For $d = 2$ and if \mcal{I} is a 
convex polygon, then we can construct a collection of sets $\mathbb{D}_\mcal{I}$ such that
\mcal{I} is guaranteed (by construction) to be a projection-translation-invariant subset of every $\mcal{D} \in \mathbb{D}_\mcal{I}$ (see Construction~\ref{constr} in \refsec{ptinvsets} for details).

Constructing sets with the projection-translation invariant property
in higher dimensions in left for further work. It might well be that our construction for polygons carries over to polytopes in $\reals^d$ for arbitrary $d$.


\subsection{Main Results}
The following result provides an algorithm for uncertain resources that are characterized by the projection-translation invariance property. The proof is deferred to \refsec{gen}.

\begin{thm}
Consider an uncertain resource as per \refdef{res_classes}.
In particular,
the set of implementable (feasible) setpoints at time step $k \in \posint$ is given by 
a \term{convex} set $\Il_k$ that is \term{not known at the time of advertising $\Al_k$}.
Suppose in addition that, for every $k$,  $\Il_k$ is a projection-translation invariant subset of a given convex compact set $\Dl \subseteq \reals^2$.
For every $k \in \natnum$, let $e_k \in \reals$ be the accumulated error as defined in \refeq{accerror}.
Then, if the resource agent
\begin{itemize}
\item uses a \term{persistent predictor} to advertise the $PQ$ profile, namely sends $\Al_k = \Il_{k-1}$; and
\item implements $u^{\textrm{imp}}_k = \mathrm{proj}_{\mcal{I}_k}\l(u^{\textrm{req}}_k - e_{k-1}\r)$ for every $k\in \posint$,
\end{itemize}
then it has $(\diam \Dl)$-bounded accumulated-error, where $\diam \Dl$ is the diameter of $\Dl$.
\label{thm:uncertdevices}
\end{thm}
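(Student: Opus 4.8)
The plan is to analyze the quantity $v_k := u^{\textrm{req}}_k - e_{k-1}$, i.e.\ the point onto which the projection operator is applied at step $k$, since $u^{\textrm{imp}}_k = \mathrm{proj}_{\Il_k}(v_k)$. From the defining recursion $e_k = e_{k-1} + u^{\textrm{imp}}_k - u^{\textrm{req}}_k$ one immediately gets the identity $e_k = u^{\textrm{imp}}_k - v_k$. Hence, once I show that $v_k \in \Dl$ for every $k \geq 1$, the claim follows at once: $u^{\textrm{imp}}_k \in \Il_k \subseteq \Dl$ (using $\Il_k \subseteq \Dl$, which is part of projection-translation invariance) and $v_k \in \Dl$, so $\|e_k\| = \|u^{\textrm{imp}}_k - v_k\| \leq \diam \Dl$.

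So the real work is the induction $v_k \in \Dl$. For the base case $k=1$, I would use $e_0 = \boldsymbol{0}$, so $v_1 = u^{\textrm{req}}_1 \in \Al_1 = \Il_0 \subseteq \Dl$, invoking the persistent-predictor rule $\Al_1 = \Il_0$ together with the standing assumption that every request lies in the advertised $PQ$ profile. For the inductive step, assume $v_k \in \Dl$; using $e_k = u^{\textrm{imp}}_k - v_k$ and $u^{\textrm{imp}}_k = \mathrm{proj}_{\Il_k}(v_k)$ I would write
\[
v_{k+1} = u^{\textrm{req}}_{k+1} - e_k = u^{\textrm{req}}_{k+1} + v_k - \mathrm{proj}_{\Il_k}(v_k),
\]
and then apply \refdef{invar} to the set $\Il_k$ (a projection-translation invariant subset of $\Dl$ by hypothesis) with $u := u^{\textrm{req}}_{k+1}$ and $v := v_k$. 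Here $u^{\textrm{req}}_{k+1} \in \Al_{k+1} = \Il_k$ (persistent predictor again, plus requests lie in the profile) and $v_k \in \Dl$ by the induction hypothesis, so the definition yields exactly $u^{\textrm{req}}_{k+1} + v_k - \mathrm{proj}_{\Il_k}(v_k) \in \Dl$, i.e.\ $v_{k+1} \in \Dl$, closing the induction.

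The only genuinely delicate point --- and the reason the projection-translation invariance property is phrased as it is --- lies in this inductive step: the error-feedback rule produces the translation vector $v_k - \mathrm{proj}_{\Il_k}(v_k)$, which is precisely the ``projection-direction'' translation appearing in \refdef{invar}, and the persistent predictor is exactly what guarantees that the subsequent request $u^{\textrm{req}}_{k+1}$ is a member of $\Il_k$, the set we just projected onto, so that the translated copy of $\Il_k$ still lying inside $\Dl$ keeps $v_{k+1}$ inside $\Dl$. Everything else is bookkeeping (the index shift $\Al_{k+1} = \Il_k$, the degenerate step $k=1$ where $e_0 = \boldsymbol{0}$). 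It is worth noting that the argument uses no accuracy of the predictor beyond the structural facts $\Al_k = \Il_{k-1}$ and $u^{\textrm{req}}_k \in \Al_k$, which is exactly why the resulting bound is worst-case over all request sequences and all (possibly adversarial) realizations of $\Il_k$.
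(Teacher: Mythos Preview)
Your proof is correct and follows essentially the same route as the paper: both define the shifted request $v_k := u^{\textrm{req}}_k - e_{k-1}$, prove $v_k \in \Dl$ by induction using the persistent-predictor identity $u^{\textrm{req}}_{k+1} \in \Al_{k+1} = \Il_k$ together with the projection-translation invariance of $\Il_k$, and then read off $\|e_k\| = \|\mathrm{proj}_{\Il_k}(v_k) - v_k\| \leq \diam \Dl$. The paper merely wraps the last step in its \reflem{berror} and the $\mcal{G}$-approximation language, which is purely cosmetic.
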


Note that \refthm{uncertdevices} uses the projection-translation-invariance property in a ``many-to-one'' relation: many sets $\mcal{I}_k$ are required to be projection-translation-invariant subsets of one set \mcal{D}. The theorem 
does not say how the set \mcal{D} can be constructed; it simply supposes that such a set \mcal{D} is ``given''.

In \refsec{pvexample}, we will see an example where the sets $\mcal{I}_k$ are selected from a particular parameterized collection of sets for which it turns out to be 
straightforward to find a set $\mcal{D}$ with the required property. 
And, in \refsec{ptinvsets}, we deal with the problem of finding such \mcal{D} for a more general case.

In the one-dimensional case, it is trivial to construct a set $\mcal{D}$ with the required ``many-to-one'' property: using Proposition \ref{prop:pti1d}, it is easy to see that such a $\mcal{D}$ can be constructed by taking the union of all intervals $\mcal{I}_k$; we will use this in the following corollary.

\begin{corollary}
  Consider an uncertain \temph{real-power only} resource. In particular, the set of implementable (feasible) setpoints at time step $k \in \posint$ is given by an \term{interval}  $\Il_k$ that is \term{not known at the time of advertising $\Al_k$}.
Then, if the resource agent
\begin{itemize}
\item uses a \term{persistent predictor} to advertise the $PQ$ profile, namely sends $\Al^P_k = \Il_{k-1}$ ($\Al^Q_k = \{0\}$); and
\item implements $P^{\textrm{imp}}_k = \mathrm{proj}_{\mcal{I}_k}\l(P^{\textrm{req}}_k - e^P_{k-1}\r)$ for every $k\in \posint$,
\end{itemize}
then it has $(\diam \Dl)$-bounded accumulated-error, where $\Dl :=\conv \l(\bigcup_{k = 1}^{\infty} \Il_k \r)$.
\end{corollary}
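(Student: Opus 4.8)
The plan is to obtain this statement directly from \refthm{uncertdevices}, after lifting the real-power-only setting into $\reals^2$ and checking the projection-translation invariance hypothesis, which in one dimension is supplied by Proposition~\ref{prop:pti1d}.

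First I would make the set $\Dl$ concrete. Since every $\Il_k$ is a bounded interval and the claimed bound $\diam\Dl$ is assumed finite, the numbers $a := \inf_k(\inf\Il_k)$ and $b := \sup_k(\sup\Il_k)$ satisfy $-\infty < a \le b < \infty$, and $\Il_k \subseteq [a,b]$ for every $k$. The closed interval $[a,b]$ is the closure of $\conv\!\big(\bigcup_k \Il_k\big)$ and has the same diameter $b-a$, so it is harmless — and convenient — to take $\Dl = [a,b]$, which is convex and compact. (If one insists on keeping $\Dl = \conv\!\big(\bigcup_k \Il_k\big)$ verbatim, note that this set can fail to be closed when infinitely many distinct $\Il_k$ occur; replacing it by its closure changes neither the hypothesis nor the diameter.) Writing $\Il_k = [c_k, d_k]$ with $a \le c_k \le d_k \le b$, Proposition~\ref{prop:pti1d} then shows that each $\Il_k$ is a projection-translation invariant subset of $\Dl$.

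Next, to invoke \refthm{uncertdevices} literally (it is stated for subsets of $\reals^2$), I would apply the embedding $p \mapsto (p,0)$: set $\Il_k' := \Il_k \times \{0\}$ and $\Dl' := \Dl \times \{0\}$. Since $\mathrm{proj}_{\Il_k'}(v^P,0) = (\mathrm{proj}_{\Il_k}(v^P),0)$, the $\reals^2$ projection-translation invariance of $\Il_k'$ inside $\Dl'$ follows from the scalar case just established. Advertising $\Al_k^P = \Il_{k-1}$ with $\Al_k^Q = \{0\}$ is exactly $\Al_k = \Il_{k-1}'$, and because the $Q$-component of every request, of every implemented setpoint, and hence of every $e_k$ is identically zero, the update $P^{\textrm{imp}}_k = \mathrm{proj}_{\Il_k}(P^{\textrm{req}}_k - e^P_{k-1})$ coincides with $u^{\textrm{imp}}_k = \mathrm{proj}_{\Il_k'}(u^{\textrm{req}}_k - e_{k-1})$.

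All hypotheses of \refthm{uncertdevices} are then met, so it yields $(\diam\Dl')$-bounded accumulated error, and $\diam\Dl' = \diam\Dl = b-a$, which is the claim. The only step that needs any care is pinning down $\Dl$ as a genuinely compact interval in the presence of infinitely many $\Il_k$; this is a routine topological remark rather than a real obstacle, and everything else is a mechanical specialization of the already-proved theorem.
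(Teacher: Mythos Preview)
Your argument is correct and matches the paper's approach: the paper simply remarks that in one dimension Proposition~\ref{prop:pti1d} lets one take the union of all intervals $\Il_k$ as $\Dl$ and then invoke \refthm{uncertdevices}, which is exactly what you do. Your extra care about the closure of $\Dl$ and the explicit embedding into $\reals^2$ are details the paper glosses over, but the route is the same.
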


\subsection{Example: Photovoltaic (PV) System}
\label{sec:pvexample}
Here, we explain how we can apply \refthm{uncertdevices} to
devise a resource agent for a PV system with bounded accumulated-error.

Let $S_\text{rated}$ and $\phi_{\max}$ denote
the rated power of the converter and angle corresponding to the minimum power factor, respectively. We suppose that these quantities are given (they correspond to physical properties of the PV system), and that $S_\text{rated}\geq 0$ and $0 \leq \phi_{\max} < \pi$. 
Note that for any power setpoint $(P,Q)$, the rated power imposes the constraint $P^2+Q^2 \leq  S^2_\text{rated}$; the angle $\phi_{\max}$ imposes that $\arctan (Q/P) \leq \phi_{\max}$.

Let us now choose $P_{\max},  \varphi \in \reals$ such that $0 \leq P_{\max} \leq S_\text{rated}$, $ 0 \leq \varphi \leq \phi_{\max}$, and $\frac{P_{\max}}{\cos \varphi} = S_\text{rated}$, and let
\[
  \mcal{T}(x) :=  \{ (P,Q)\in \reals^2 : 0 \leq P \leq x, \frac{|Q|}{P}\leq \tan \varphi \}
\]
be a triangle-shaped set in the $PQ$ plane; see \reffig{Tset} for an illustration.
Note that for any combination of $P_{\max}$ and $\varphi$, the triangle $\mcal{T}(x)$ for any $x\in [0,P_{\max}]$ is fully contained in the disk that corresponds to the rated-power constraint, and, moreover, the two upper corner points of $\mcal{T}(P_{\max})$  lie on the boundary of that disk.

Let $p^{\max}_k$ be the maximum real power available at timestep $k\in \natnum$ (typically determined by the solar irradiance). 
Using $p^{\max}_k$, we define the set of implementable points
at timestep $k$ as
\[
\mcal{I}_k:= \mcal{T}(\min(p_k^{\max},P_{\max})).
\]
\begin{figure}
  \centering
  \includegraphics[scale=.8]{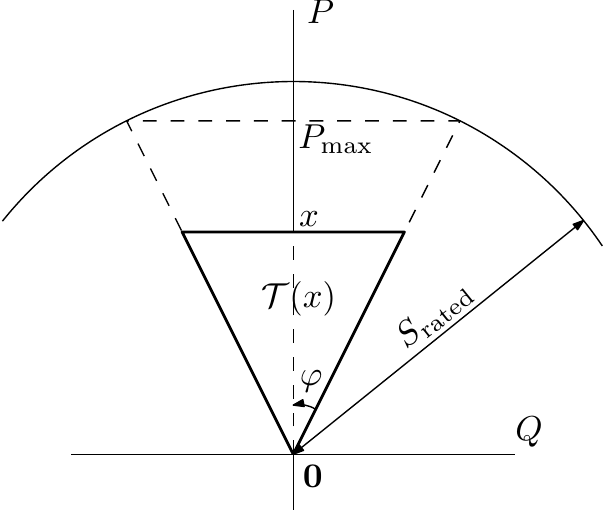}
  \caption{The parameterized collection of sets $\{ \mcal{T} (x): x\in [0,P_{\max}]\}$, and its relation to the rated-power constraint of the PV converter.}
  \label{fig:Tset}
\end{figure}
\begin{corollary}
Let $P_{\max}$, $\varphi$, and $\mcal{I}_k$ (for every $k\in \natnum$) be defined as above for a given PV system.
    A resource agent for this PV system
    that advertises $\mcal{A}_k := \mcal{I}_{k-1}$
  and implements $u^{\textrm{imp}}_k = \mathrm{proj}_{\mcal{I}_k} \l(u^{\textrm{req}}_k - e_{k-1}\r)$ for every $k\in \posint$
  has $c$-bounded accumulated-error with
  \[
c:=\max\l\{ \frac{P_{\max}}{\cos\varphi}, 2\, P_{\max}\tan \varphi \r\}.
  \]
  \label{cor:pvcor}
\end{corollary}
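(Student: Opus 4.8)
The plan is to derive the corollary directly from Theorem~\ref{thm:uncertdevices}. All hypotheses of that theorem except one are immediate from the definitions given in this subsection: each $\mcal{I}_k = \mcal{T}(\min(p_k^{\max}, P_{\max}))$ is a (possibly degenerate) triangle and hence convex, the agent advertises the persistent prediction $\mcal{A}_k = \mcal{I}_{k-1}$, and it implements $u_k^{\textrm{imp}} = \mathrm{proj}_{\mcal{I}_k}(u_k^{\textrm{req}} - e_{k-1})$. What remains is to exhibit one convex compact set $\mcal{D}$ of which every $\mcal{I}_k$ is a projection-translation invariant subset in the sense of Definition~\ref{def:invar}, and then to read off the bound $\diam \mcal{D}$ and show it equals $c$.

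The natural choice is $\mcal{D} := \mcal{T}(P_{\max})$, the largest triangle of the family. Since $p_k^{\max}\ge 0$ and $0\le P_{\max}$ we have $\min(p_k^{\max},P_{\max}) \in [0,P_{\max}]$, and since $\mcal{T}(x) = \tfrac{x}{P_{\max}}\,\mcal{T}(P_{\max})$ is a copy of $\mcal{D}$ scaled from the origin, it follows that $\mcal{I}_k \subseteq \mcal{D}$ for all $k$. It therefore suffices to prove that $\mcal{T}(x)$ is a projection-translation invariant subset of $\mcal{T}(P_{\max})$ for every $x \in [0, P_{\max}]$. To do this I would fix $v = (P,Q) \in \mcal{T}(P_{\max})$ and $u \in \mcal{T}(x)$ and determine the translation vector $\tau := v - \mathrm{proj}_{\mcal{T}(x)}(v)$ by a short case analysis. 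Because $v$ already satisfies the cone constraint $|Q| \le P\tan\varphi$, the only way $v \notin \mcal{T}(x)$ is $P > x$. If in addition $|Q| \le x\tan\varphi$, the closest point of $\mcal{T}(x)$ is $(x,Q)$ on the far vertical edge, so $\tau = (P - x, 0)$; if $|Q| > x\tan\varphi$, the projection equals the slant vertex $(x, x\tan\varphi)$ (resp.\ $(x,-x\tan\varphi)$) according to the sign of $Q$, so $\tau = (P-x,\,Q - x\tan\varphi)$ (resp.\ $\tau = (P-x,\,Q+x\tan\varphi)$); and if $v \in \mcal{T}(x)$ then $\tau = 0$ and $u+\tau = u \in \mcal{D}$ trivially. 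In each of the first two cases one checks, using $0 \le u^P \le x < P \le P_{\max}$, $|u^Q| \le u^P\tan\varphi$ and $|Q| \le P\tan\varphi$, that $u+\tau$ satisfies $0 \le (u+\tau)^P \le P_{\max}$ and $|(u+\tau)^Q| \le (u+\tau)^P\tan\varphi$, i.e.\ $u + \tau \in \mcal{T}(P_{\max}) = \mcal{D}$; this is a two-line computation per case.

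The one step that needs genuine care — and which I expect to be the main obstacle — is justifying that, when $|Q| > x\tan\varphi$, the projection of $v$ onto $\mcal{T}(x)$ is exactly the slant vertex: concretely, showing that the feet of the perpendiculars from $v$ to the lines carrying the slant edge and the far edge both fall strictly beyond that vertex, so that $v - \text{vertex}$ lies in the normal cone of $\mcal{T}(x)$ at the vertex. This is where one uses $v\in\mcal{T}(P_{\max})$ (hence $|Q|\le P\tan\varphi$) together with $P>x$ and $|Q|>x\tan\varphi$; the computation reduces to the identity $x\cos^2\varphi + x\tan\varphi\sin\varphi\cos\varphi = x$. Everything else is routine bookkeeping.

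Once projection-translation invariance is established, Theorem~\ref{thm:uncertdevices} yields $(\diam \mcal{D})$-bounded accumulated-error, so it only remains to compute $\diam \mcal{T}(P_{\max})$. The diameter of a convex polygon is attained at a pair of vertices, and $\mcal{T}(P_{\max})$ has vertices $(0,0)$ and $(P_{\max}, \pm P_{\max}\tan\varphi)$; the two slant sides have length $P_{\max}\sqrt{1+\tan^2\varphi} = P_{\max}/\cos\varphi$ and the far side has length $2 P_{\max}\tan\varphi$, hence $\diam \mcal{T}(P_{\max}) = \max\{P_{\max}/\cos\varphi,\ 2 P_{\max}\tan\varphi\} = c$, which completes the proof. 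As an alternative to the hands-on argument of the previous two paragraphs, one could instead invoke the general polygon construction of Section~\ref{sec:ptinvsets} by verifying that $\mcal{T}(P_{\max})$ belongs to the family $\mathbb{D}_{\mcal{T}(x)}$; the direct argument above, however, is self-contained and exploits the simple structure of this nested family of similar triangles with common apex.
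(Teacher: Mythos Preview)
Your proposal is correct and follows the same overall strategy as the paper: take $\mcal{D}=\mcal{T}(P_{\max})$, verify that every $\mcal{T}(x)$ is a projection-translation-invariant subset of $\mcal{D}$, apply \refthm{uncertdevices}, and compute $\diam\mcal{T}(P_{\max})$. The difference is in how the invariance step (the paper's \reflem{triangleprop}) is established. The paper proves it by invoking the general polygon machinery of \refsec{ptinvsets}: it runs Construction~\ref{constr} on $\mcal{T}(x)$ with the specific choices $p_1=v_1=(0,0)$ and $p_2^P=P_{\max}$, observes that the resulting superset is exactly $\mcal{T}(P_{\max})$ independently of $x$, and appeals to \reflem{polygons}. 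You instead argue directly, locating $\mathrm{proj}_{\mcal{T}(x)}(v)$ by a case split (vertical edge versus slant vertex) and then checking $u+\tau\in\mcal{T}(P_{\max})$ via the elementary inequalities $|u^Q|\le u^P\tan\varphi$, $|Q|\le P\tan\varphi$, $0\le u^P\le x<P\le P_{\max}$. Your argument is self-contained and exploits the nested similar-triangle structure cleanly; the paper's argument is there to demonstrate that Construction~\ref{constr} does the job, which is its intended point. Both are complete, and you already note the alternative at the end of your write-up.
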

The main ingredient of the proof of \refcor{pvcor} is the following lemma,
whose proof is given in \refsec{proofoftriang}.
\begin{lemma}
Every set in the collection $\{ \mcal{T} (x): x\in [0,P_{\max}]\}$ is projection-translation-invariant with respect to $\mcal{D}:= \mcal{T} (P_{\max})$.
\label{lem:triangleprop}
\end{lemma}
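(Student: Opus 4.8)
The plan is to unwind Definition~\ref{def:invar} directly. Fix $x \in [0, P_{\max}]$ and write $\mcal{I} := \mcal{T}(x)$, which is the closed triangle with vertices $(0,0)$, $(x, x\tan\varphi)$, $(x, -x\tan\varphi)$ (degenerating to a segment or a point when $\varphi = 0$ or $x = 0$), and $\mcal{D} := \mcal{T}(P_{\max})$. One must show that for every $v \in \mcal{D}$ and every $u \in \mcal{I}$ the point $u + \tau$ lies in $\mcal{D}$, where $\tau := v - \mathrm{proj}_{\mcal{I}}(v)$; since $\mcal{I}$ is closed and convex the projection is unique, so $\tau$ is well defined. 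As $\mcal{I} = \conv\{(0,0),(x,x\tan\varphi),(x,-x\tan\varphi)\}$ and $\mcal{D}$ is convex, it is enough to prove the stronger inclusion $\mcal{I} + \tau \subseteq \mcal{D}$, and for that it suffices to check that the three translated vertices $(0,0)+\tau$ and $(x,\pm x\tan\varphi)+\tau$ each satisfy the two inequalities defining $\mcal{D}$, namely $0 \le P \le P_{\max}$ and $|Q| \le P\tan\varphi$.

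The observation that keeps the case analysis short is that all the triangles $\mcal{T}(x)$ share the apex $(0,0)$ and the two slant edges (on the lines $Q = \pm P\tan\varphi$), differing only in the position of the vertical far edge $P = x$. Hence $\mcal{T}(x) \subseteq \mcal{T}(P_{\max})$, and a point $v \in \mcal{D}$ fails to lie in $\mcal{I}$ \emph{only} because $v^P > x$ --- the constraint $|v^Q| \le v^P\tan\varphi$ already holds for $v \in \mcal{D}$. Therefore, when $v \notin \mcal{I}$, the point $v$ lies strictly to the right of the far edge of $\mcal{I}$, and, by the normal-cone characterization of the Euclidean projection onto a convex polygon, $\mathrm{proj}_{\mcal{I}}(v)$ equals $(x, v^Q)$ when $|v^Q| \le x\tan\varphi$ (the residual $(v^P - x, 0)$ lies in the normal cone of the far edge), the far vertex $(x, x\tan\varphi)$ when $v^Q > x\tan\varphi$, and $(x, -x\tan\varphi)$ when $v^Q < -x\tan\varphi$ (the residual lies in the cone spanned by the outward normals of the far edge and the relevant slant edge). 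When $v \in \mcal{I}$ we simply have $\tau = 0$ and $u + \tau = u \in \mcal{I} \subseteq \mcal{D}$.

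It then remains, in each of the three non-trivial cases, to evaluate the three translated vertices and check the two inequalities; this is a short computation using only $x < v^P \le P_{\max}$ and $|v^Q| \le v^P\tan\varphi$. For example, in the case $v^Q > x\tan\varphi$ one has $\tau = (v^P - x,\ v^Q - x\tan\varphi)$ with both coordinates positive, and the translated vertices are $\tau$, $v$, and $(v^P,\ v^Q - 2x\tan\varphi)$: membership of the first reduces to $v^Q \le v^P\tan\varphi$, the second is $v \in \mcal{D}$ by hypothesis, and the third follows from $(2x - v^P)\tan\varphi < x\tan\varphi < v^Q$ together with $v^Q - 2x\tan\varphi \le v^Q \le v^P\tan\varphi$. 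The case $|v^Q| \le x\tan\varphi$ is even simpler ($\tau = (v^P - x, 0)$ just slides $\mcal{I}$ rightwards inside $\mcal{D}$), and the case $v^Q < -x\tan\varphi$ is the mirror image of $v^Q > x\tan\varphi$ under $Q \mapsto -Q$. The only step demanding genuine care rather than bookkeeping is the identification of $\mathrm{proj}_{\mcal{I}}(v)$ for $v \in \mcal{D}\setminus\mcal{I}$; once that is pinned down, everything downstream is routine.
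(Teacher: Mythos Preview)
Your argument is correct: you identify the projection $\mathrm{proj}_{\mcal{I}}(v)$ explicitly by exploiting that $v \in \mcal{D}\setminus\mcal{I}$ can only violate the constraint $P \le x$, split into the three natural cases (face, upper vertex, lower vertex), and then verify $\mcal{I}+\tau \subseteq \mcal{D}$ by checking the translated vertices against the two defining inequalities of $\mcal{D}$. The computations you sketch all go through.

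The paper takes a different route. Rather than a direct case analysis, it invokes the general machinery developed in Section~\ref{sec:ptinvsets}: it applies Construction~\ref{constr} to the polygon $\mcal{T}(x)$ (choosing $p_1=v_1=(0,0)$ and $p_2$ on $\ell_1$ with $P$-coordinate $P_{\max}$), obtains $\mcal{D}=\mcal{T}(x)+\mcal{G}$ with $\mcal{G}=\mcal{P}^{\max}_2\cup\mcal{P}^{\max}_3$, and then appeals to Lemma~\ref{lem:polygons} to conclude projection-translation invariance. The point of doing it this way is to exhibit the triangle family as an instance of the general polygon construction, so that the result follows from the abstract lemma rather than from bespoke calculations. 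Your approach, by contrast, is entirely self-contained and more elementary---no need for the notions of $\mcal{P}^{\min}_i$, $\mcal{P}^{\max}_i$, or the enclosing polygon $\mcal{R}$---but it does not illustrate how the general framework is meant to be used, and would not transfer to other polygon shapes without redoing the case analysis from scratch.
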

\begin{proof}[Proof of Corollary~\ref{cor:pvcor}.]
  By definition of $\mcal{I}_k$ and by \reflem{triangleprop}, each member of the sequence $(\mcal{I}_k)_k$ is projection-translation-invariant with respect to $\mcal{T}(P_{\max})$.
  Because the resource agent computes the \pqprof $\mcal{A}_k$ and $u^\text{imp}_k$ in accordance with the requirements of \refthm{uncertdevices}, we can apply the latter to conclude that the resource agent has $\mathrm{diam}\mcal{T}(P_{\max})$-bounded accumulated-error. Since $\mcal{T}(P_{\max})$ is an isosceles triangle, its diameter is either $P_{\max}/\cos \varphi$ (the length of one of its \emph{legs}) for $\varphi \leq\frac{\pi}{6}$ or $ 2\,P_{\max}\tan \varphi$ otherwise (the length of its \emph{base}).
Hence, the claim follows.
\end{proof}

\section{Achieving Bounded Accumulated-Error through Error Diffusion: Generic Approach and Proofs} \label{sec:gen}
In this section, we propose a generic approach for achieving bounded accumulated-error via the error-diffusion method. The results in this section are used to prove the results of Sections
\ref{sec:drr} and \ref{sec:uncertain}; however, they can also be used to design agents for other resources that satisfy appropriate conditions.
The reader that is interested in the performance of the RAs proposed in Sections
\ref{sec:drr} and \ref{sec:uncertain} can safely skip the present section and proceed to \refsec{num}.

We next introduce two general concepts: the first (\refdef{bounded_g_error}) is a generalization of the $c$-bounded accumulated-error concept, and the second (\refdef{Gapprox}) is a generalization of the projection concept.
\begin{definition}
  \label{def:bounded_g_error}
  For any set $\mcal{G}\in \mcal{B}(\reals^2)$,
  we say that a given resource agent has \term{\mcal{G}-contained accumulated-error} if 
\[
  e_{ k} \in \mcal{G}, \quad k \geq 1.
\]
\end{definition}
Note that having \mcal{G}-contained accumulated-error 
implies having $c$-bounded accumulated-error (\refdef{bounded_error}) with $c \geq \max \{ \| x \| : x\in \mcal{G}\}$.



\begin{definition} \label{def:Gapprox}
Fix $d\in \natnum$. Let $F: \mcal{D} \rightarrow \mcal{I}$ be an arbitrary map, where $\mcal{D},\mcal{I} \subseteq \reals^d$.
We say that $F$ is a \term{\map{\mcal{G}}} if there exists
a set $\mcal{G} \in \mcal{B}(\reals^d)$ 
such that
$$ F(x) - x \in \mcal{G}$$
holds for all $x \in \mcal{D}$.
\end{definition}

Note that the projection operator is a \map{\mcal{G}} for suitably chosen $\mcal{G}$.

The following lemma is a key result that explains why the error-diffusion method leads to bounded accumulated-error.

\begin{lemma}
Let $\mcal{G} \in \mcal{B}(\reals^2)$ and let $k \in \posint$ be arbitrary.
Let $\mcal{D}_k, \mcal{I}_k \subseteq \reals^2$ be non-empty sets and let
$F_k:\mcal{D}_k \rightarrow \mcal{I}_k$ be a \map{\mcal{G}}.
Let $e_k \in \reals^2$ be the accumulated error as defined in \refeq{accerror}. 
Let $u^{\textrm{req}}_k \in \reals^2$. 
Then, if
\begin{equation} \label{eq:asm_req}
u^{\textrm{req}}_k - e_{k-1} \in \Dl_k
\end{equation}
holds, 
and
\begin{equation}
u^{\textrm{imp}}_k  \defas F_k\l(u^{\textrm{req}}_k - e_{k-1}\r), 
\label{eq:asm_imp}
\end{equation}
we have that
\[
  e_k \in \mcal{G}. 
\]
\label{lem:berror}
\end{lemma}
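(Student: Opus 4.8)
The plan is to exploit the recursive structure of the accumulated-error vector together with the defining property of a \map{\mcal{G}}; this makes the argument essentially a one-line computation, so the bulk of the "work" is just keeping track of which term is which.

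First I would rewrite $e_k$ from \refeq{accerror} in its one-step recursive form,
$e_k = e_{k-1} + \l(u^{\textrm{imp}}_k - u^{\textrm{req}}_k\r)$,
which follows immediately by splitting off the last summand (and is well posed since $e_0 = \boldsymbol{0}$, the empty sum). Next I would substitute the definition \eqref{eq:asm_imp} of the implemented setpoint, $u^{\textrm{imp}}_k = F_k\l(u^{\textrm{req}}_k - e_{k-1}\r)$, and regroup to obtain
$e_k = F_k\l(u^{\textrm{req}}_k - e_{k-1}\r) - \l(u^{\textrm{req}}_k - e_{k-1}\r)$.
Writing $x := u^{\textrm{req}}_k - e_{k-1}$, this reads $e_k = F_k(x) - x$.

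Now I would invoke hypothesis \eqref{eq:asm_req}, which says precisely that $x \in \mcal{D}_k$, i.e. $x$ lies in the domain of $F_k$. Since $F_k$ is a \map{\mcal{G}} (\refdef{Gapprox}), it holds that $F_k(x) - x \in \mcal{G}$ for every point of its domain, in particular for this $x$. Reading off the previous display, $F_k(x) - x$ is exactly $e_k$, hence $e_k \in \mcal{G}$, which is the claim.

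There is no genuinely hard step here; the only thing worth being careful about is that the statement fixes a single $k$ and imposes its hypotheses only at that step, so the argument must not secretly use any property of $e_{k-1}$ beyond $u^{\textrm{req}}_k - e_{k-1} \in \mcal{D}_k$ --- and the computation above uses nothing else. (When this lemma is applied inductively, as in the proofs of \refthm{discreteres} and \refthm{uncertdevices}, one additionally verifies that $e_{k-1} \in \mcal{G}$ from the previous step forces \eqref{eq:asm_req} to hold; that verification is scenario-specific and belongs to those proofs rather than to this lemma.)
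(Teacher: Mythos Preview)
Your proof is correct and follows essentially the same approach as the paper: rewrite $e_k$ recursively, substitute \eqref{eq:asm_imp} to get $e_k = F_k(u^{\textrm{req}}_k - e_{k-1}) - (u^{\textrm{req}}_k - e_{k-1})$, and invoke the \map{\mcal{G}} property at the point $u^{\textrm{req}}_k - e_{k-1} \in \mcal{D}_k$. Your added remarks about which hypotheses are actually used and how the lemma is later applied inductively are accurate and do not diverge from the paper's argument.
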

\begin{proof}
Note that we can write $e_k$ in the following equivalent recursive form:
\begin{align}
  e_{k} & = e_{k-1} + u^\text{imp}_k - u^\text{req}_k, \notag \\ 
          & = e_{k-1} +F_k( u^\textnormal{req}_k - e_{k-1})  - u^\textnormal{req}_k \notag \quad\text{(using \refeq{asm_imp})} \\
 \label{eqn:e_rec}       & =F_k( u^\textnormal{req}_k - e_{k-1})  - (u^\textnormal{req}_k - e_{k-1}).
\end{align}
The result follows since $F_k$ is a \map{\mcal{G}}.
\end{proof}

Now, the main idea of our approach is as follows: if (i) a resource agent can be characterized by a \emph{sequence} of \map{\mcal{G}}{s} $F_k, k\in\posint$ for some $\mcal{G}\in \mcal{B}(\reals^2)$ that is independent of $k$, where this sequence of maps $\{F_k\}$ represents the physics of the controlled resource as well as any internal (low-level) control loops in the resource itself or in the resource agent,
and (ii) $u^\textnormal{imp}_k$ is given by 
\refeq{asm_imp} 
for every $k \in \posint$,
then that resource agent achieves \mcal{G}-contained accumulated-error
if we can invoke \reflem{berror} for every $k \in \posint$.
Conditioned on (i) and (ii), we may invoke \reflem{berror} for every $k$ if we can prove that condition \eqref{eq:asm_req} is satisfied for all $k$.
In the following subsections, we show how we can ensure \eqref{eq:asm_req} for deterministic and uncertain resources.


\subsection{Result for Deterministic Resources}

In this section, we focus on deterministic resources as per Definition \ref{def:res_classes}. In practice this means that the set of implementable setpoints at time step $k$ is known in advance, hence can be used to advertise the $PQ$ profile $\Al_k$.

\begin{thm} \label{thm:ball}
Let $\mcal{G} \in \mcal{B}(\reals^2)$.
Let $e_k \in \reals^2$ be the accumulated error as defined in \refeq{accerror} for every $k\in\natnum$.
For every $k \in \posint$,  let $\mcal{I}_k \in \mcal{B}(\reals^2)$, $\mcal{A}_k:=\conv (\mcal{I}_k)$ and  $u^{\textrm{req}}_k \in \mcal{A}_k$,
let $\Dl_k := \Al_k + \mcal{G}$, 
and let $F_k: \mcal{D}_k \rightarrow \mcal{I}_k$ a \map{\mcal{G}}.
Then, if a resource agent implements $u^{\textrm{imp}}_k = F_k\l(u^{\textrm{req}}_k - e_{k-1}\r)$ for every $k\in \posint$,  
it has \mcal{G}-contained accumulated-error.
\end{thm}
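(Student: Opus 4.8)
The plan is to apply \reflem{berror} inductively over the time steps $k \in \posint$. The lemma already does the heavy lifting: conditioned on $F_k$ being a \map{\mcal{G}} and on the request-translation condition \eqref{eq:asm_req}, i.e.\ $u^{\textrm{req}}_k - e_{k-1} \in \Dl_k$, it concludes that $e_k \in \mcal{G}$ whenever $u^{\textrm{imp}}_k$ is computed via \eqref{eq:asm_imp}. So the only real work is to verify that \eqref{eq:asm_req} holds for every $k$, and the natural way to get this is an induction whose hypothesis is exactly ``$e_{k-1} \in \mcal{G}$''.

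First I would set up the base case: by definition of the accumulated error \refeq{accerror}, $e_0 = \boldsymbol{0}$, and since $\mcal{G} \in \mcal{B}(\reals^2)$ is non-empty, closed and (one should note) contains the origin — this follows because a \map{\mcal{G}} $F_0$, or more simply the recursion \eqref{eqn:e_rec} at $k=1$ with the projection/approximation property, forces $\boldsymbol{0}$ into $\mcal{G}$; alternatively $e_0 = \boldsymbol{0} \in \mcal{G}$ can be taken as part of the setup since $\mcal{G}$-contained accumulated-error is only claimed for $k \geq 1$ while the induction needs $e_0$. In any case, for the inductive step assume $e_{k-1} \in \mcal{G}$. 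Then, since $u^{\textrm{req}}_k \in \mcal{A}_k$ and $-e_{k-1} \in -\mcal{G}$... — here I have to be slightly careful, because $\Dl_k = \Al_k + \mcal{G}$ is a Minkowski sum with $\mcal{G}$, not with $-\mcal{G}$. The clean fix is to observe that $\mcal{G}$ is symmetric about the origin in the relevant cases, or more robustly: the lemma \reflem{berror} is the one that needs $u^{\textrm{req}}_k - e_{k-1} \in \Dl_k$, so I need $u^{\textrm{req}}_k - e_{k-1} = u^{\textrm{req}}_k + (-e_{k-1})$ with $u^{\textrm{req}}_k \in \Al_k$ and $-e_{k-1} \in \mcal{G}$; this requires $\mcal{G} = -\mcal{G}$. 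I expect this symmetry to be an implicit standing assumption on $\mcal{G}$ (it holds for all the concrete $\mcal{G}$ used in the paper — balls, symmetric intervals, symmetric polygons), and I would state it explicitly, or note that $e_{k-1} \in \mcal{G}$ combined with the structure of error-diffusion forces symmetry. Given that, $u^{\textrm{req}}_k - e_{k-1} \in \Al_k + \mcal{G} = \Dl_k$, which is precisely \eqref{eq:asm_req}.

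With \eqref{eq:asm_req} established and $F_k$ a \map{\mcal{G}} by hypothesis, \reflem{berror} applies directly at step $k$ and yields $e_k \in \mcal{G}$, closing the induction. Since this holds for all $k \in \posint$, the resource agent has \mcal{G}-contained accumulated-error by \refdef{bounded_g_error}, which is the claim.

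\textbf{Main obstacle.} The genuine subtlety is the sign/symmetry issue in the Minkowski-sum step $\Dl_k := \Al_k + \mcal{G}$ versus needing $u^{\textrm{req}}_k - e_{k-1} \in \Dl_k$: without $\mcal{G} = -\mcal{G}$ the induction does not go through cleanly. I expect the paper either assumes $\mcal{G}$ symmetric or, equivalently, works with $\mcal{G}$ of the form $\{x : \|x\| \le c\}$ and small variations thereof; I would make this explicit. The rest is a routine induction with \reflem{berror} as a black box.
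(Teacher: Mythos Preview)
Your approach is essentially identical to the paper's: induction on $k$, with \reflem{berror} doing the work in the inductive step. The paper's proof is four lines long and proceeds exactly as you describe --- assume $e_{k-1}\in\mcal{G}$, deduce $u^{\textrm{req}}_k - e_{k-1}\in\mcal{A}_k+\mcal{G}=\mcal{D}_k$, apply the lemma.

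You have, however, spotted something the paper glosses over. The paper simply writes ``Since $u^\textnormal{req}_k \in \Al_k$ and because $e_{k-1} \in \mcal{G}$ \ldots\ it holds that $u^\textnormal{req}_k - e_{k-1} \in \Al_k+ \mcal{G}$ by construction,'' without addressing the sign: from $e_{k-1}\in\mcal{G}$ one gets $-e_{k-1}\in-\mcal{G}$, hence $u^{\textrm{req}}_k - e_{k-1}\in\mcal{A}_k+(-\mcal{G})$, which equals $\mcal{D}_k$ only if $\mcal{G}=-\mcal{G}$. Your instinct that symmetry of $\mcal{G}$ is an implicit standing assumption is correct --- every concrete $\mcal{G}$ in the paper (symmetric intervals, balls) satisfies it --- and your proposal to state it explicitly is an improvement over the paper's argument, not a deficiency in yours. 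Likewise, your worry about whether $\boldsymbol{0}\in\mcal{G}$ for the base case is legitimate; the paper handles this by declaring ``the statement holds for $k=0$'' without comment, again relying implicitly on the concrete $\mcal{G}$'s all containing the origin.
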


The trick here is to define $\Dl_k$ (the domain of $F_k$) as the ``\mcal{G}-inflation'' of $\Al_k$, such that condition \eqref{eq:asm_req} is trivially satisfied.
Of course, to use this theorem for an actual application, it remains to ensure that the (application-specific) map $F_k$ is a \map{\mcal{G}} on this particular domain.
In \refsec{proofDiscr}, we prove that for the discrete resources of \refsec{drr}, the corresponding map has this required property.


\begin{proof}
We will prove the statement using induction on $k$. 
By definition of $e_k$, the statement holds for $k=0$.
Suppose that the statement holds for timestep $k-1$ (for arbitrary $k \in \posint$).
Since $u^\textnormal{req}_k \in \Al_k$ and because
$e_{k-1} \in \mcal{G}$ (by the induction hypothesis), it holds that
$$u^\textnormal{req}_k - e_{k-1} \in \Al_k+ \mcal{G} = \Dl_k $$ by construction. Furthermore, $F_k$ is a \map{\mcal{G}} and
$u^{\textrm{imp}}_k = F_k\l(u^{\textrm{req}}_k - e_{k-1}\r)$, hence we may invoke \reflem{berror} to conclude that $e_k \in \mcal{G}$. 
\end{proof}

\begin{remark}
From the proof, it might seem that  we do not require any relation between $\mcal{A}_k$ and $\mcal{I}_k$, however, without any such relation it will be impossible to construct $F_k$ such that it is a \map{\mcal{G}} for every $k$.  We define $\mcal{A}_k:=\conv (\mcal{I}_k)$ as it satisfies our needs, but in principle one could be more general here, and define $\mcal{A}_k$ as an arbitrary function of $\mcal{I}_k$.
\end{remark}

\subsection{Proof of \refthm{discreteres}} \label{sec:proofDiscr}

\begin{fact} 
  The bound
\[
|\mathrm{proj}_{\mcal{S}}(x) - x| \leq \frac{\Delta_\mcal{S}}{2}
\]
holds for every finite non-empty set $\mcal{S}\subset \reals$ and for every $x \in [\min \mcal{S},\max\mcal{S}]$.
\label{fact:quanterr}
\end{fact}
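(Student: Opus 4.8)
The plan is to reduce the claim to the elementary observation that any point of a closed interval lies within half the interval's length of one of its two endpoints, combined with the fact that (by the definition of the projection operator given in \refsec{prelim}) $\mathrm{proj}_{\mcal{S}}(x)$ realizes the minimum distance $\min_{\sigma\in\mcal{S}}|\sigma - x|$.

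First I would dispose of the degenerate case $|\mcal{S}|=1$: here $\Delta_{\mcal{S}}=0$, and $[\min\mcal{S},\max\mcal{S}]$ is the single point $\mcal{S}=\{s_1\}$, so necessarily $x=s_1$ and $\mathrm{proj}_{\mcal{S}}(x)=s_1$; hence $|\mathrm{proj}_{\mcal{S}}(x)-x|=0=\Delta_{\mcal{S}}/2$. For the main case $|\mcal{S}|>1$, write $\mcal{S}=\{s_1,\ldots,s_m\}$ with $s_1<\cdots<s_m$ as in Definition~\ref{def:max_step}. Given $x\in[s_1,s_m]$, take the largest index $i\in[m-1]$ with $s_i\le x$ (such an index exists since $s_1\le x$, and one checks easily that it then also satisfies $x\le s_{i+1}$, by maximality of $i$ when $i<m-1$ and trivially when $i=m-1$). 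Since $s_i,s_{i+1}\in\mcal{S}$ and $\mathrm{proj}_{\mcal{S}}(x)$ attains the minimum distance from $x$ to $\mcal{S}$, we get
\[
|\mathrm{proj}_{\mcal{S}}(x)-x|\ \le\ \min\{\,x-s_i,\ s_{i+1}-x\,\}.
\]
The last step is to bound the minimum of two nonnegative reals by their average:
\[
\min\{\,x-s_i,\ s_{i+1}-x\,\}\ \le\ \tfrac12\big((x-s_i)+(s_{i+1}-x)\big)\ =\ \tfrac12\,(s_{i+1}-s_i)\ \le\ \tfrac12\,\Delta_{\mcal{S}},
\]
where the final inequality is the definition of $\Delta_{\mcal{S}}$ (Definition~\ref{def:max_step}). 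Chaining the two displays yields the claim.

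There is no genuine obstacle here; the only point requiring a little care is that the projection operator is defined only up to the choice of a minimizer, so the argument must pass through the quantity $\min_{\sigma\in\mcal{S}}|\sigma-x|$ rather than a particular selected value — but since every valid projection (including the tie-breaking variant $\mathrm{proj}^{y}_{\mcal{S}}$ of Definition~\ref{def:specialproj}) realizes that minimum, the stated bound holds uniformly for all of them, which is exactly what the later applications in \refsec{proofDiscr} need.
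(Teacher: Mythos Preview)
Your argument is correct. The paper does not actually prove this statement: it is recorded as a \emph{Fact} and invoked without justification in the proof of \reflem{apprmap}. Your reduction to the nearest pair of consecutive elements, together with the inequality $\min\{a,b\}\le\tfrac12(a+b)$, is the standard way to see it, and your handling of the degenerate case $|\mcal{S}|=1$ and of the non-uniqueness of the projection is appropriate.
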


\begin{lemma}
  Let $\mathbb{S}$ be a finite non-empty collection of sets $\mathbb{S}\defas \{ \mcal{S}_i \}_{i \in [n]}$ where $n\in \posint$ and $\mcal{S}_i  \subset \reals$ is a finite non-empty set for all $i\in [n]$. Let $\Delta_{\mathbb{S}}$ denote the maximum stepsize of $\mathbb{S}$, and define $\mcal{G}:= [-\tfrac12\Delta_{\mathbb{S}},\tfrac12\Delta_{\mathbb{S}}]\times \{0\}$.
  Let $(i_k)_{k \in \posint}  $ be any sequence with $i_k \in [n]$, and let $\Il_k \defas \mcal{S}_{i_k}\times \{0\}$,
$\mcal{D}_k := \conv(\mcal{I}_k ) + \mcal{G}$ and
\begin{align*}
  F_k:  \mcal{D}_k &\rightarrow \mcal{I}_k \\
  (p,q) & \mapsto (\mathrm{proj}_{\mcal{S}_{i_k}}(p),0)
\end{align*}
be defined for every $k\in \posint$.
Then $F_k$ is a \map{\mcal{G}} 
for all $k \in \posint$.
\label{lem:apprmap}
\end{lemma}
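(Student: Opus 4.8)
The plan is to unwind every definition and reduce the claim to an elementary one-dimensional quantization estimate. First I would note that $\conv(\mcal{I}_k) = \conv(\mcal{S}_{i_k}\times\{0\}) = [\min\mcal{S}_{i_k},\max\mcal{S}_{i_k}]\times\{0\}$, so that, with $\mcal{G} = [-\tfrac12\Delta_{\mathbb{S}},\tfrac12\Delta_{\mathbb{S}}]\times\{0\}$, the Minkowski sum defining the domain becomes
\[
\mcal{D}_k = [\min\mcal{S}_{i_k}-\tfrac12\Delta_{\mathbb{S}},\ \max\mcal{S}_{i_k}+\tfrac12\Delta_{\mathbb{S}}]\times\{0\}.
\]
In particular every $x\in\mcal{D}_k$ has the form $x=(p,0)$ with $p$ ranging over that inflated interval, and $F_k(x)-x = (\mathrm{proj}_{\mcal{S}_{i_k}}(p)-p,\ 0)$. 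Hence, by \refdef{Gapprox}, it suffices to show that $|\mathrm{proj}_{\mcal{S}_{i_k}}(p)-p|\leq\tfrac12\Delta_{\mathbb{S}}$ for every such $p$ (and, as a preliminary remark, that $F_k$ is well-defined: $\mathrm{proj}_{\mcal{S}_{i_k}}(p)\in\mcal{S}_{i_k}$ gives $F_k(x)\in\mcal{S}_{i_k}\times\{0\}=\mcal{I}_k$, and the distance bound below is insensitive to how ties in the projection are broken).

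I would then split into three cases according to the position of $p$. If $p\in[\min\mcal{S}_{i_k},\max\mcal{S}_{i_k}]$, \reffact{quanterr} gives $|\mathrm{proj}_{\mcal{S}_{i_k}}(p)-p|\leq\tfrac12\Delta_{\mcal{S}_{i_k}}\leq\tfrac12\Delta_{\mathbb{S}}$, the last inequality being the definition of $\Delta_{\mathbb{S}}$ (\refdef{collstep}). If $p<\min\mcal{S}_{i_k}$, then every element of $\mcal{S}_{i_k}$ is $\geq\min\mcal{S}_{i_k}>p$, so the nearest point is $\mathrm{proj}_{\mcal{S}_{i_k}}(p)=\min\mcal{S}_{i_k}$ and $|\mathrm{proj}_{\mcal{S}_{i_k}}(p)-p| = \min\mcal{S}_{i_k}-p\leq\tfrac12\Delta_{\mathbb{S}}$, using $p\geq\min\mcal{S}_{i_k}-\tfrac12\Delta_{\mathbb{S}}$. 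The case $p>\max\mcal{S}_{i_k}$ is symmetric, with $\mathrm{proj}_{\mcal{S}_{i_k}}(p)=\max\mcal{S}_{i_k}$ and $p\leq\max\mcal{S}_{i_k}+\tfrac12\Delta_{\mathbb{S}}$. In all three cases $F_k(x)-x\in[-\tfrac12\Delta_{\mathbb{S}},\tfrac12\Delta_{\mathbb{S}}]\times\{0\}=\mcal{G}$, so $F_k$ is a \map{\mcal{G}}; since $k\in\posint$ was arbitrary, this holds for all $k$.

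The argument is essentially routine, and the only point needing attention is that \reffact{quanterr} is stated only for arguments inside $[\min\mcal{S},\max\mcal{S}]$, whereas the domain $\mcal{D}_k$ is the slightly larger set $\conv(\mcal{I}_k)+\mcal{G}$. The mild obstacle is therefore to observe that outside the convex hull the projection simply clamps to the appropriate endpoint, and that the inflation amount $\tfrac12\Delta_{\mathbb{S}}$ has been chosen exactly so that even in that regime the resulting error stays inside $\mcal{G}$; in other words, the choice $\mcal{D}_k=\conv(\mcal{I}_k)+\mcal{G}$ is tailored precisely to make the bound survive on the whole domain. Everything else is bookkeeping.
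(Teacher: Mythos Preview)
Your proof is correct and follows essentially the same approach as the paper: reduce to the one-dimensional estimate $|\mathrm{proj}_{\mcal{S}_{i_k}}(p)-p|\leq\tfrac12\Delta_{\mathbb{S}}$, apply \reffact{quanterr} (together with \refdef{collstep}) when $p$ lies in $[\min\mcal{S}_{i_k},\max\mcal{S}_{i_k}]$, and handle the overshoot region by noting that the projection clamps to the nearest endpoint with distance at most the inflation radius $\tfrac12\Delta_{\mathbb{S}}$. The only difference is cosmetic: the paper lumps the two exterior subcases into a single ``otherwise'' clause, whereas you treat $p<\min\mcal{S}_{i_k}$ and $p>\max\mcal{S}_{i_k}$ separately.
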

\begin{proof}
We need to show that $F_k(x) - x \in \mcal{G}$ for every $x \in \mcal{D}_k$ and every $k\in \posint$.
Since $\mcal{D}_k^{Q} = \mcal{G}^{Q}=0$, it suffices to show that $\mathrm{proj}_{\mcal{I}_k}(y)-y \in \mcal{G}^{P}=[-\tfrac12\Delta_{\mathbb{S}},\tfrac12\Delta_{\mathbb{S}}]$ for every $y \in \mcal{D}_k^{P}= [ \min \mcal{I}_k - \tfrac12\Delta_{\mathbb{S}} \ , \max \mcal{I}_k +  \tfrac12\Delta_{\mathbb{S}}]$ and every $k\in \posint$.

  Let $k$ be arbitrary.
If $y \in [\min \mcal{I}_k, \max \mcal{I}_k]$, then
\[
|\mathrm{proj}_{\mcal{I}_k}(y) -y |
\leq \frac{\Delta_{\mcal{S}_{i_k}}}{2}\leq \frac{\Delta_\mathbb{S}}{2}
\]
by \reffact{quanterr} and by \refdef{collstep}.
Otherwise, i.e., if $y\notin \conv(\Il_k)$, $y$ will be projected to the closest boundary of $\conv(\Il_k)$, 
for which $|\mathrm{proj}_{\mcal{I}_k}(y) -y | \leq \tfrac12\Delta_\mathbb{S}$ holds by definition of $\mcal{D}_k$.
\end{proof}

\begin{proof}[Proof of Theorem \ref{thm:discreteres}]
  \reflem{apprmap} guarantees that the map
  \[
    F_k( u^\text{req}_k):=(\mathrm{proj}_{\mcal{S}_{i_k}}(P^\text{req}_k - e^P_{k-1}),0)
  \]
is a \map{\mcal{G}} on the domain $\mcal{A}_k+\mcal{G} $ where $\mcal{A}_k:=\conv(\mcal{I}_k)$,
  $\mcal{I}_k:=\conv ( \mcal{S}_{i_k}\times \{0\})$ and
  $\mcal{G}:=[-\tfrac12\Delta_{\mathbb{S}},\tfrac12\Delta_{\mathbb{S}}]\times \{0\}$.
Because the implemented setpoint is computed as
$$u^{\textrm{imp}}_k =
(\mathrm{proj}_{\mcal{S}_{i_k}}(P^\text{req}_k - e^P_{k-1}),0)$$ for every $k\in \posint$,
we can apply \refthm{ball} to conclude that the resource agent has \mcal{G}-contained accumulated-error, which implies that the resource agent has $\tfrac12\Delta_{\mathbb{S}}$-bounded accumulated-error by definition of \mcal{G}.
\end{proof}

\subsection{Proof of \refthm{uncertdevices}}

\begin{proof}
Consider the mapping $F_k : \mcal{D} \rightarrow \mcal{I}_k$, $x \mapsto \mathrm{proj}_{\mcal{I}_k}(x)$. Recall that $\Il_k$ is, in particular, a convex subset of $\Dl$ by the definition of projection-translation invariance. 
Thus, $\| F_k(x) - x \| \leq \diam \Dl$ holds for all $x \in \Dl$ and all $k\in \posint$; namely, $F_k$ is a \map{$(\diam \Dl)$} for all $k\in \posint$.

Next, we prove that (i)
$u^{\textrm{req}}_k - e_{k-1} \in \mcal{D}$ and that (ii) $\|e_k\|\leq \diam \Dl$  holds for all $k \in \posint$,  using induction.
Clearly, claim (i) holds for $k = 1$ as $u^{\textrm{req}}_1 - e_0 = u^{\textrm{req}}_1 \in \mcal{A}_1 = \Il_0 \subseteq \mcal{D}$. This result allows us to invoke \reflem{berror} and conclude that (ii) also holds for $k=1$.

Now suppose that $u^{\textrm{req}}_k - e_{k-1} \in \Dl$ holds
for arbitrary $k \in \posint$ (the induction hypothesis).
For $k+1$, we have, as in \eqref{eqn:e_rec},
\[
u^{\textrm{req}}_{k+1} - e_{k}  = u^{\textrm{req}}_{k+1} + (u^\textnormal{req}_k - e_{k-1}) - \mathrm{proj}_{\mcal{I}_k}( u^\textnormal{req}_k - e_{k-1}).
\]
Observe that $u^{\textrm{req}}_{k+1} \in \Il_k$ as the resource agent uses the persistent predictor for the $PQ$ profile. Also, by the induction hypothesis, $u^\textnormal{req}_k - e_{k-1} \in \Dl$. Hence, invoking the projection-translation invariance property of $\Il_k$ (Definition \ref{def:invar}) with $u \equiv u^{\textrm{req}}_{k+1} $ and $v \equiv u^\textnormal{req}_k - e_{k-1}$, we obtain that $u^{\textrm{req}}_{k+1} - e_{k} \in \Dl$, which proves (i) for $k+1$. Again, this allows us to invoke \reflem{berror} for $k+1$ and conclude that $\|e_{k+1}\|\leq \diam \Dl$ which completes the induction argument, and with that, the proof.
\end{proof}

\subsection{Projection-Translation Invariance: Constructing Supersets of Polygons}
\label{sec:ptinvsets}
In this section we present, for a given convex polygon \mcal{I}, how to construct a superset \mcal{D} such that $\mcal{I}$ is a projection-translation invariant subset of \mcal{D}.
The superset is not unique; in fact, the degrees of freedom in constructing the superset give rise to an (infinite) collection of supersets, $\mathbb{D}_{\mcal{I}}$, which we have already introduced in \refsec{kppti}.

We will use the construction in the following way: when given a \emph{collection} of sets $\{\mcal{I}_k\}_k$, and if, for every $k$, $\mcal{I}_k$ is a convex polygon, then we can construct a \emph{minimal} set \mcal{D} with the ``many-to-one'' property (which means that for every $k$, $\mcal{I}_k$ is a projection-translation invariant subset of \mcal{D}) as 
\begin{equation}
    \mcal{D} \in \arg\min \{\diam \mcal{W} : \mcal{W} \in \bigcap_{k} \mathbb{D}_{\mcal{I}_k}\},
    \label{eq:inters}
  \end{equation}
  where we claim that the intersection in \eqref{eq:inters} is always non-empty and always contains a set with bounded diameter, provided that all sets $\mcal{I}_k$ are bounded.

 For our construction, we need the following definitions. (See also \reffig{cones} for some pictorial examples of these definitions.) 
\begin{definition}[Cone]
We define the (convex) \emph{cone} of any two vectors $v,w\in \reals^2$ as 
\[
  \mathrm{cone}(v,w):=\{ \alpha v + \beta w: \alpha,\beta \in \reals, \alpha \geq 0, \beta \geq 0 \}.
\]
\end{definition}
\begin{definition}[Polar Cone]
  For any cone $\mcal{C}\subset \reals^2$, we define the \emph{polar cone} of $\mcal{C}$ as
\[
  \mcal{C}^o := \{ y \in \reals^2: y^{\textnormal{T}} x\leq 0 \quad \forall x \in \mcal{C} \}
\]
\end{definition}
\begin{figure}
  \centering
  \includegraphics{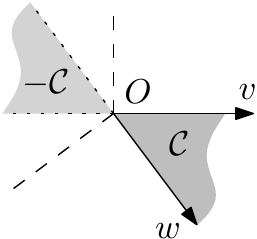}\hspace{2em}
  \includegraphics{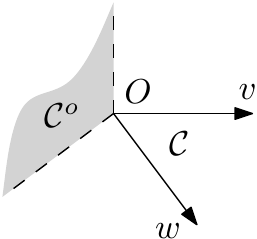}\hspace{2em}
  \includegraphics{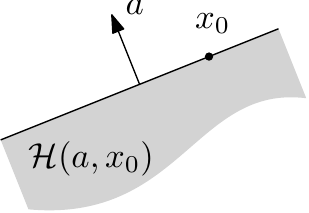}
  \caption{The cones $\mcal{C}:=\mathrm{cone}(v,w)$, $-\mcal{C}$ and $\mcal{C}^o$, and the half-space $\mcal{H}(a,x_0)$.}
  \label{fig:cones}
\end{figure}
\begin{figure}
  \centering
  \includegraphics[scale=.8]{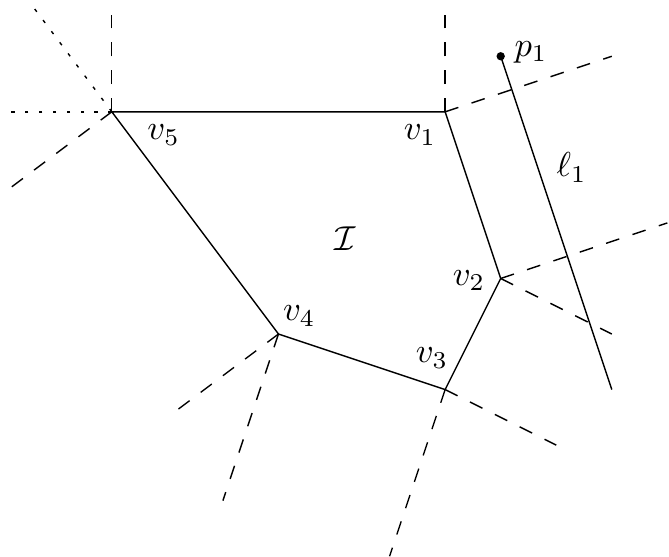}
  \includegraphics[scale=.8]{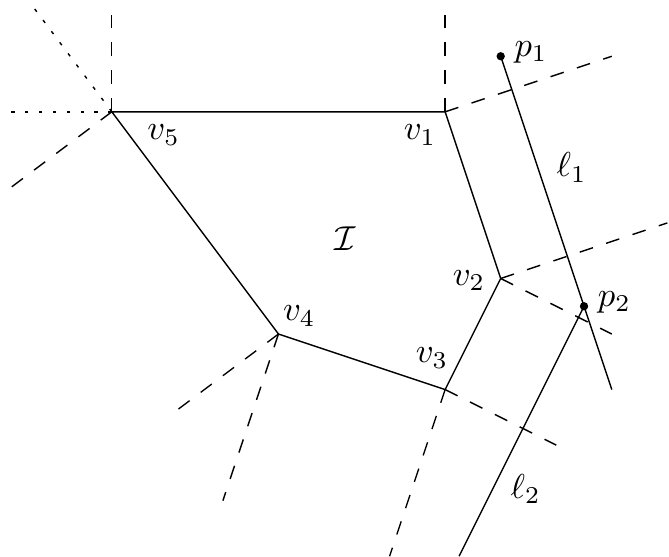}
  \includegraphics[scale=.8]{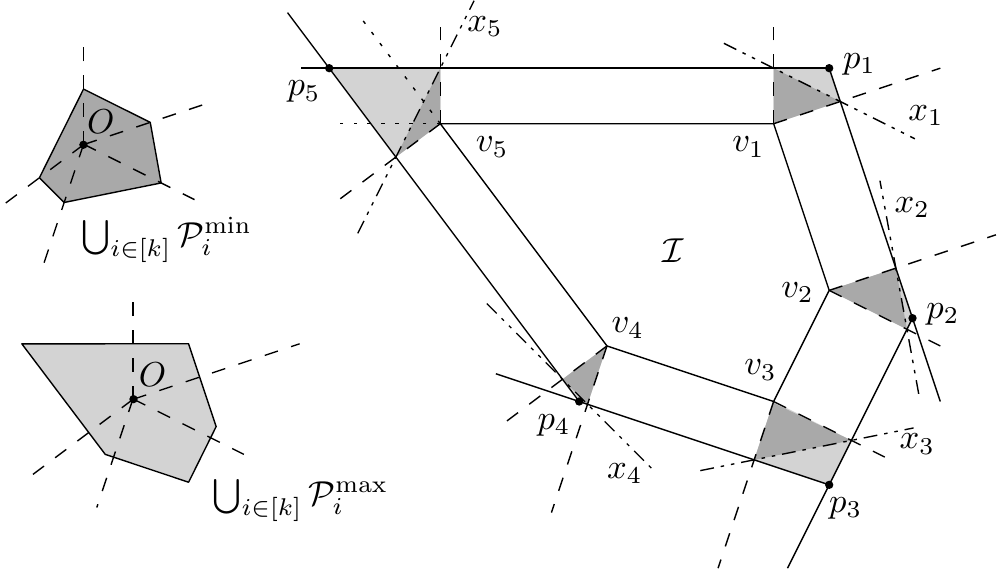}
  \caption{Illustration of parts of Construction~\ref{constr} for constructing a set \mcal{D} such that \mcal{I} is a projection-translation-invariant subset of \mcal{D}.}
  \label{fig:constr}
\end{figure}

\begin{definition}[Half Space]
Let
\begin{align*}
\mcal{H}:\reals^2 \times\reals^2 & \rightarrow \mcal{P}(\reals^2) \\
 (a,x_0)&\mapsto\{ x \in \reals^2 \,|\, a^T(x-x_0)\leq 0 \}.
\end{align*}
\end{definition}
Note that $\mcal{H}(a, x_0)$ is a closed halfspace that extends in direction $-a$ and where $x_0$ lies on its associated halfplane.

\newcommand{\wrap}{\mathrm{wrap}}
Our construction is given below. See \reffig{constr} for a graphical example.
\begin{construction}
  \label{constr}
Let $\mcal{I}\subset \reals^2$ be a simple convex polygon. Let $n$ denote the number of vertices of \mcal{I}, let $v_1$ be an arbitrary vertex of \mcal{I} and let $v_2, \ldots,v_n$ respectively denote the remaining vertices encountered when traversing the polygon clockwise from $v_1$.
Let
\[
\mathrm{wrap}(i):=\begin{cases} 1 & \text{if } i = n+1 \\
i & \text{if } i \in [n] \\
n &\text{if } i =0
\end{cases}
\]
be defined for any $i \in \{0,\ldots,n+1\}$.
 Let $\mcal{C}_i:= \mathrm{cone}(v_{\wrap(i+1)}-v_i,v_{\wrap(i-1)}-v_i)$ for every $i\in [n]$.

Choose an arbitrary point $\pi_1 \in (-\mcal{C}_1) \cap \mcal{C}_1^o$ and define $p_1:=v_1+\pi_1$.\footnote{If the interior angle of vertex $i$ is \emph{acute}, then $(-\mcal{C}_i) \cap \mcal{C}_i^o = -\mcal{C}_i$. Otherwise, i.e., if the interior angle is \emph{obtuse}, then $(-\mcal{C}_i) \cap \mcal{C}_i^o = \mcal{C}_i^o$.} Let $\ell_1$ be the line through $p_1$ and parallel to $v_2-v_1$. For every $j\in \{2,\ldots,n-1\}$, choose a point $\pi_j \in (\ell_{j-1}- \{ v_j\} )\cap (-\mcal{C}_j) \cap \mcal{C}_j^o $, let $p_j:=v_j+\pi_j$ and let $\ell_j$ be the line through $p_j$ and parallel to $v_{j+1}-v_j$.
Finally, let $\ell_n$ be the line through $p_1$ and parallel to $v_n-v_1$, and define $p_n$ as the intersection between $\ell_{n-1}$ and $\ell_n$.

Let $\mcal{P}_i^{\min}:= \mcal{C}_i^o \cap \mcal{H}(v_i-\mathrm{proj}_{x_i}(v_i), \mathrm{proj}_{x_i}(v_i)-v_i)$, where $x_i$ is the line through 
$\mathrm{proj}_{\ell_i}(v_i)$ and $\mathrm{proj}_{\ell_{\wrap(i - 1)}}(v_i)$.
Let \mcal{R} be the polygon defined by $\{p_i\}_{i\in n}$, and let $\mcal{P}_i^{\max}:= \mcal{C}_i^o \cap (\mcal{R}-\{ v_i \})$ for every $i\in[n]$.

Let \mcal{G} be an arbitrary convex set such that $\bigcup_{i\in [n]} \mcal{P}^{\min}_i \subseteq \mcal{G} \subseteq \bigcup_{i\in [n]} \mcal{P}^{\max}_i$. Define $\mcal{D}:= \mcal{I}+\mcal{G}$.
\end{construction}
\begin{lemma}
  For a given simple convex polygon \mcal{I}, any set \mcal{D} constructed using Construction~\ref{constr} is such that \mcal{I} is a projection-translation-invariant subset of \mcal{D}.
 \label{lem:polygons}
\end{lemma}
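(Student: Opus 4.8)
The plan is to verify directly the two requirements of \ref{def:invar} for the convex compact set $\mcal{D} := \mcal{I} + \mcal{G}$. Since $\mcal{D}$ is a Minkowski sum involving $\mcal{I}$, it suffices to prove: (i) $\boldsymbol{0} \in \mcal{G}$, which gives $\mcal{I} = \mcal{I} + \{\boldsymbol{0}\} \subseteq \mcal{I} + \mcal{G} = \mcal{D}$; and (ii) the \emph{translation vector} $\tau(v) := v - \mathrm{proj}_{\mcal{I}}(v)$ lies in $\mcal{G}$ for every $v \in \mcal{D}$, since then $u + \tau(v) \in \mcal{I} + \mcal{G} = \mcal{D}$ for every $u \in \mcal{I}$. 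Thus the whole statement reduces to the inclusion $\tau(\mcal{D}) \subseteq \mcal{G}$, from which $\boldsymbol{0}\in\mcal{G}$ will also fall out.

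First I would unpack the combinatorics of Construction~\ref{constr}. The lines $\ell_1,\dots,\ell_n$ are outward parallel offsets of the supporting lines of the edges of $\mcal{I}$, and $p_j = \ell_{\wrap(j-1)}\cap\ell_j$; hence $\mcal{R}$ is a convex polygon with $\mcal{I}\subseteq\mcal{R}$, whose edges are parallel to those of $\mcal{I}$ and whose normal cone at $p_i$ equals $\mcal{C}_i^o$, the normal cone of $\mcal{I}$ at $v_i$. Because $\mcal{I}$ is a convex polygon, $\mcal{C}_i$ is exactly the tangent cone of $\mcal{I}$ at $v_i$, so $\mcal{I}-\{v_i\}\subseteq\mcal{C}_i$ and $\mcal{C}_i^o$ is the normal cone there; the cones $\{\mcal{C}_i^o\}_i$, together with the edge-outward-normal rays between consecutive ones, form the normal fan of $\mcal{I}$, so the set of points projecting onto $v_i$ is $v_i+\mcal{C}_i^o$, and the set projecting onto the relative interior of an edge is a slab in that edge's outward-normal direction. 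From $v_i\in\mcal{R}$ we get $\boldsymbol{0}=v_i-v_i\in\mcal{C}_i^o\cap(\mcal{R}-\{v_i\})=\mcal{P}_i^{\max}$, and since $\mcal{G}$ is squeezed between $\bigcup_i\mcal{P}_i^{\min}$ and $\bigcup_i\mcal{P}_i^{\max}$ and is convex, one reads off $\boldsymbol{0}\in\mcal{G}$, settling (i).

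For $\tau(\mcal{D})\subseteq\mcal{G}$ I would split on the face of $\mcal{I}$ hit by the projection. If $v\in\mcal{I}$ then $\tau(v)=\boldsymbol{0}\in\mcal{G}$. If $\mathrm{proj}_{\mcal{I}}(v)=v_i$ is a vertex, write $v=w+g$ with $w\in\mcal{I}$ and $g\in\mcal{G}$; then $\tau(v)=(w-v_i)+g$ with $w-v_i\in\mcal{I}-\{v_i\}\subseteq\mcal{C}_i$, while $\tau(v)\in\mcal{C}_i^o$. Hence it is enough to establish the cone property $(\mcal{C}_i+\mcal{G})\cap\mcal{C}_i^o\subseteq\mcal{G}$ for every $i\in[n]$, i.e.\ that one cannot leave $\mcal{G}$ by moving in a tangent-cone direction while staying inside the polar cone. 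This is exactly what Construction~\ref{constr} is engineered to force: inside each cone $\mcal{C}_i^o$, the set $\mcal{P}_i^{\max}$ is the tightest \emph{outer} constraint on $\mcal{G}$ (it prevents $\mcal{G}$ from reaching past the offset polygon $\mcal{R}$), while $\mcal{P}_i^{\min}$, built from the auxiliary line $x_i$ and the half-space $\mcal{H}_i$, is the \emph{inner} constraint which, together with the convexity of $\mcal{G}$, supplies the ``floor'' guaranteeing that the tangent-cone ray through $g$ stays inside $\mcal{G}$ up to and including $\tau(v)$. The edge-interior case is the same argument with $\mcal{C}_i$ replaced by the tangent half-plane at the edge and $\mcal{C}_i^o$ by the outward-normal ray, using that $\mcal{P}_i^{\min}$ and $\mcal{P}_{\wrap(i+1)}^{\min}$ reach along that ray exactly as far as $\ell_i$.

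The main obstacle is the verification of this cone property for the constructed $\mcal{G}$: this is where the freedom in choosing each $\pi_j$, the passage through the auxiliary lines $x_i$, and the precise shapes of $\mcal{P}_i^{\min}$ and $\mcal{P}_i^{\max}$ must all be tracked simultaneously. Everything else — the reduction, the normal-fan bookkeeping, and $\boldsymbol{0}\in\mcal{G}$ — is routine. I would dispatch the cone property by a support-function / separating-line computation carried out cone by cone, reducing in each cone the planar claim to a one-dimensional interval statement of the type already proved in Proposition~\ref{prop:pti1d}.
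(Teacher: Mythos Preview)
Your reduction is exactly the one the paper uses: show that every translation vector $\tau(v)=v-\mathrm{proj}_{\mcal{I}}(v)$ with $v\in\mcal{D}$ lies in $\mcal{G}$, so that $u+\tau(v)\in\mcal{I}+\mcal{G}=\mcal{D}$ for all $u\in\mcal{I}$. The paper's proof is extremely terse at this point: it simply asserts that ``by construction, the set $\mcal{G}$ contains all possible translation vectors,'' citing three facts --- that each $\ell_i$ is parallel to the corresponding facet of $\mcal{I}$, that $\mcal{G}\supseteq\bigcup_i\mcal{P}_i^{\min}$, and that the projection is orthogonal --- and stops there.

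Your proposal goes further than the paper in that you try to \emph{prove} the inclusion $\tau(\mcal{D})\subseteq\mcal{G}$ rather than appeal to it, by decomposing along the normal fan of $\mcal{I}$ and reducing to the cone property $(\mcal{C}_i+\mcal{G})\cap\mcal{C}_i^o\subseteq\mcal{G}$. That is a sound and more informative route, and your identification of this step as ``the main obstacle'' is accurate --- it is precisely what the paper sweeps under ``by construction.'' One small point: your argument that $\boldsymbol{0}\in\mcal{G}$ via $\boldsymbol{0}\in\mcal{P}_i^{\max}$ is not quite complete, since $\mcal{G}$ is only required to lie \emph{below} $\bigcup_i\mcal{P}_i^{\max}$; you would want to extract $\boldsymbol{0}$ from the lower bound $\bigcup_i\mcal{P}_i^{\min}$ together with convexity instead (or simply note, as the paper implicitly does, that the case $v\in\mcal{I}$ gives $\tau(v)=\boldsymbol{0}$ and $u+\boldsymbol{0}=u\in\mcal{I}\subseteq\mcal{D}$ once $\mcal{I}\subseteq\mcal{D}$ is known).
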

\begin{proof}
  Let $v$ be as defined in \refdef{tranprop}.
  First, note that the projection-translation-invariance property trivially holds when $v \in \mcal{I}$.
  Hence, suppose w.l.o.g. that $v \in \mcal{D}\setminus\mcal{I}$.
  Now, observe that, by construction, the set \mcal{G} contains all possible translation vectors $\{v - \mathrm{proj}_\mcal{I}(v)\}_{ v \in \mcal{D}\setminus\mcal{I}}$. Namely, (i) for every $i\in [k]$ the line $\ell_i$ is parallel to the corresponding facet of \mcal{I}; (ii) $\mcal{G}\supseteq \bigcup_{i\in [k]} \mcal{P}^{\min}_i$ and, (iii) $\mathrm{proj}$ is the orthogonal projection. Hence, \mcal{I} is a projection-translation-invariant subset of $\mcal{D}=\mcal{I}+\mcal{G}$. 
\end{proof}

\subsection{Proof of \reflem{triangleprop}}
\label{sec:proofoftriang}
Throughout the proof (of \reflem{triangleprop}), we re-use the notation and objects that we introduced/defined in Construction~\ref{constr}. \reffig{triangleconstr} shows a helpful illustration of the objects that play a role in the proof.

\begin{proof}[Proof of Lemma~\ref{lem:triangleprop}.]
Pick $x\in [0,P_{\max}]$ arbitrarily. We now use Construction~\ref{constr} to construct \mcal{D} from $\mcal{T}(x)$ (where the latter corresponds to the set \mcal{I} in Construction~\ref{constr}). 
Let $v_1 := (0,0)$ and $n=3$. Choose $p_1 := v_1$. Note that this choice also defines the line $\ell_1$. Choose $p_2$ on $\ell_1$, such that $p_2^P = P_{\max}$. This choice also defines $\ell_2$, $p_3$ and $\ell_3$. Let $\mcal{G}:= \bigcup_{i \in [3]} \mcal{P}^{\max}_i =  \mcal{P}^{\max}_2 \cup \mcal{P}^{\max}_3$, since $\mcal{P}^{\max}_1=\{(0,0)\}\subseteq  \mcal{P}^{\max}_2\cup \mcal{P}^{\max}_3$.
Now, because $\mcal{T}(x)$ is a convex polygon and because we followed Construction~\ref{constr} to construct $\mcal{D}=\mcal{T}(x) + \mcal{G}$, we can use \reflem{polygons} to conclude that $\mcal{T}(x)$ is a projection-translation-invariant subset of \mcal{D}. Because this holds true for arbitrary $x\in [0,P_{\max}]$, we may conclude that every set in the collection $\{ \mcal{T} (x): x\in [0,P_{\max}]\}$ is projection-translation-invariant with respect to $\mcal{D}$.
\end{proof}
\begin{figure}
  \centering
  \includegraphics[scale=.8]{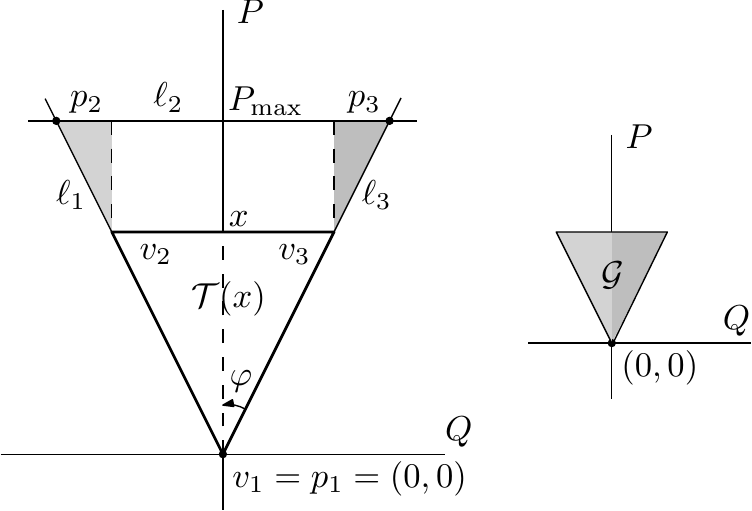}
  \caption{Invoking Construction~\ref{constr} to show that $\{ \mcal{T} (x): x\in [0,P_{\max}]\}$ is a projection-translation-invariant collection with respect to $\mcal{D}:= \mcal{T} (P_{\max})$.}
  \label{fig:triangleconstr}
\end{figure}

\section{Numerical Illustration} \label{sec:num}
Our aim here is to explore through numerical simulation the behavior of the closed-loop system (as introduced in \refsec{ga_opt}) in a realistic scenario that is not covered by Hypothesis 1. Our simulation results indicate that the use of resource agents that have the bounded accumulated-error property
significantly improves the behavior the closed-loop \cl system.

As in \cite{commelec2}, we take a case study that makes reference to the low voltage microgrid benchmark defined by the CIGR\'{E} Task Force C6.04.02 \cite{LVBenchmark}.
For the full description of the case study and the corresponding agents design, the reader is referred to \cite{commelec2}.








There are two modifications compared to the original case study: (i) The PV agents are updated with the algorithm described in Section \ref{sec:uncertain}, and (ii) the uncontrollable load (UL2) is replaced by a controllable load (modeling a resistive heater), and the corresponding agent is implemented according to the methods described in Section \ref{sec:building}.


We simulate a rather extreme scenario involving a highly variable solar irradiance profile. I.e., we let the irradiance vary according to a square wave with a period of  $300$ ms. This will cause the PV agent's \pqprof to be highly variable, which, in turn, means that part (ii) of Hypothesis 1 will get violated frequently.
Furthermore, our setup includes various resource agents that send time-variable cost functions, which violates condition (iii) of Hypothesis 1.
Note that the varying state of the grid will, through the related cost term, also violate condition (iii).

We let the cost function of the PV agent be the same as in \cite{commelec2}; this cost function  encourages to maximize active-power output. The cost function of the heater is set to a quadratic function, whose minimum lies at half the heater power, namely at $-7.5$ kW. With respect to the locking behavior of the heater, we let it lock for one second after a switch.

The results are shown in Figures \ref{fig:errorDiff_heater}--\ref{fig:errorDiffPV}. For comparison, we run the same scenario with resource agents
for which the accumulated error might grow unboundedly. I.e., those RAs do not apply the error-diffusion technique described in this paper, instead, they just project the request to the closest implementable setpoint, like in \cite{commelec2}.
From the results, we see the following benefits:
\begin{itemize}
  \item Improved utilization of renewables, less curtailment. (\reffig{utilization})
  \item Convergence to the optimum of the cost functions. (\reffig{convergence})
  \item Delivery of the requested power on average, i.e., energy, which can be valuable for an application like a virtual power plant. (\reffig{accerror_heater}, \ref{fig:convergence}, \ref{fig:accerror_PV} and \ref{fig:errorDiffPV})
  \item Less sensitivity to the choice of the grid agent's gradient-descent step size. (\reffig{errorDiff_heater})
\end{itemize}

\begin{figure}
\centering
\includegraphics[width=.8\columnwidth]{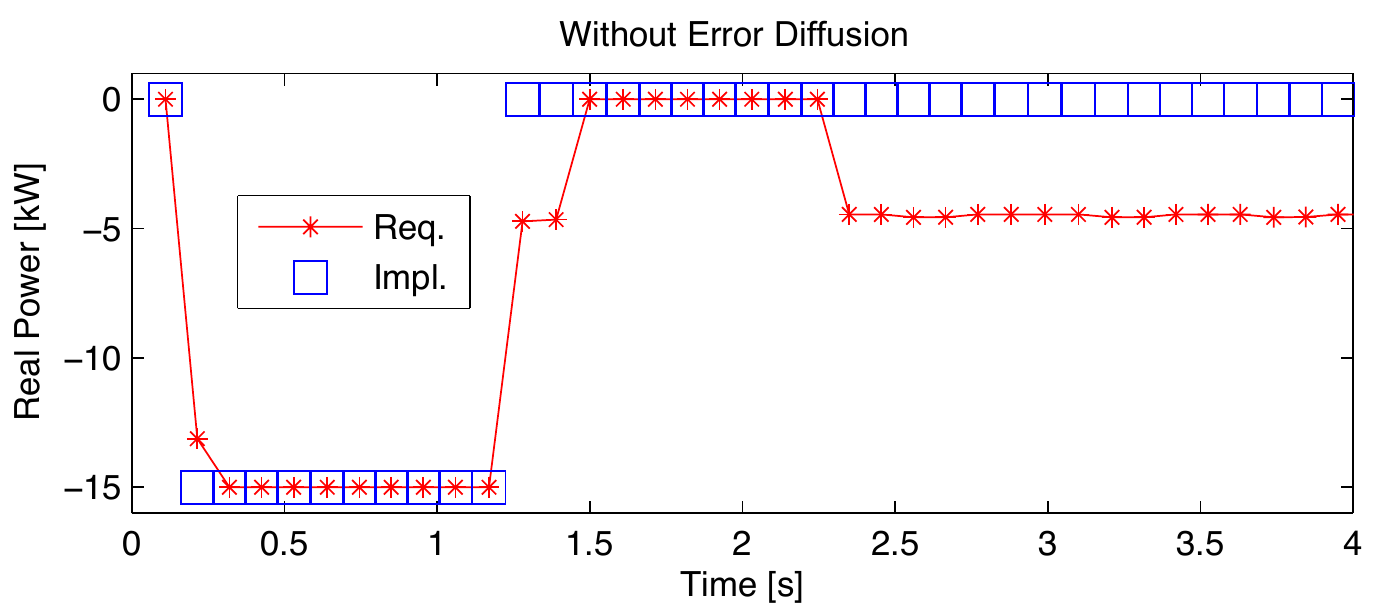}\\
\includegraphics[width=.8\columnwidth]{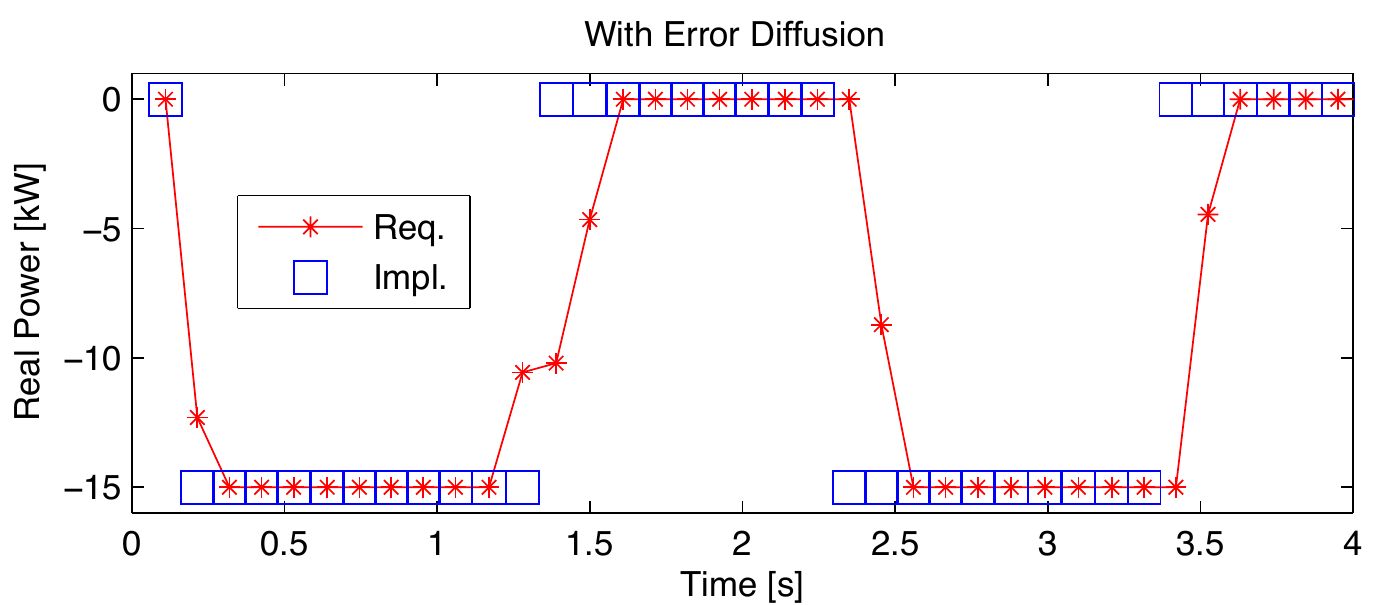}
\caption{Sequence of requested vs. implemented setpoints belonging to the heater agent. In the top figure (no error diffusion) the grid agent ``gets stuck'' on a setpoint that is not close to the optimal value. In the bottom figure (with error diffusion), the heater switches on and off with an appropriate duty cycle (the switching frequency is limited by the locking-duration parameter). }
\label{fig:errorDiff_heater}
\end{figure}

\begin{figure}
\centering
\includegraphics[width=.8\columnwidth]{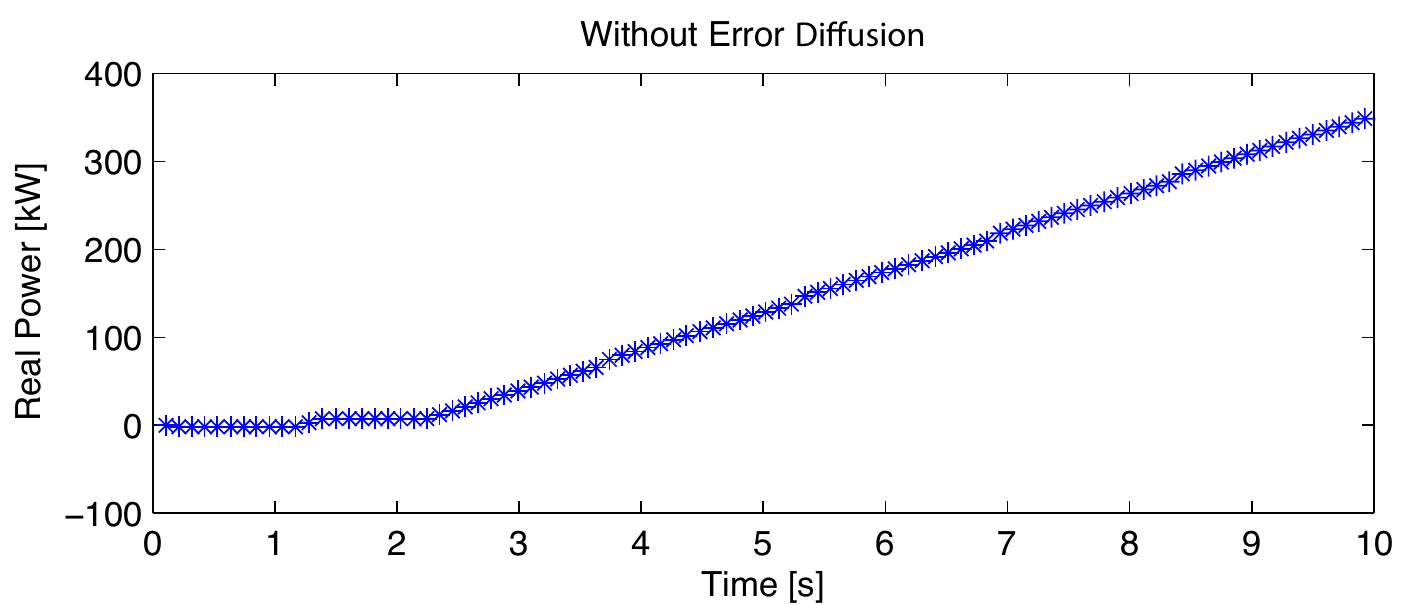}\\
\includegraphics[width=.8\columnwidth]{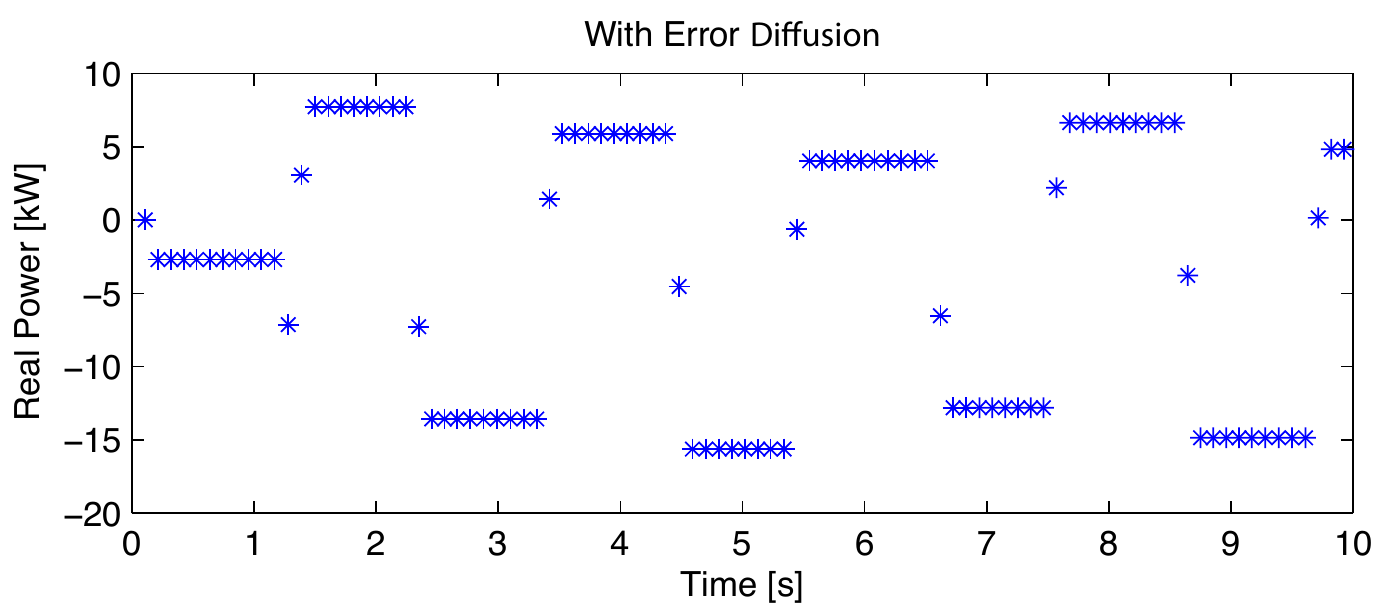}
\caption{Plots of the accumulated error of the heater agent. In the absence of error diffusion (top figure), the accumulated error grows linearly with time. With error diffusion (bottom figure), the accumulated error is bounded from above and below.}
\label{fig:accerror_heater}
\end{figure}

\begin{figure}
\centering
\includegraphics[width=.8\columnwidth]{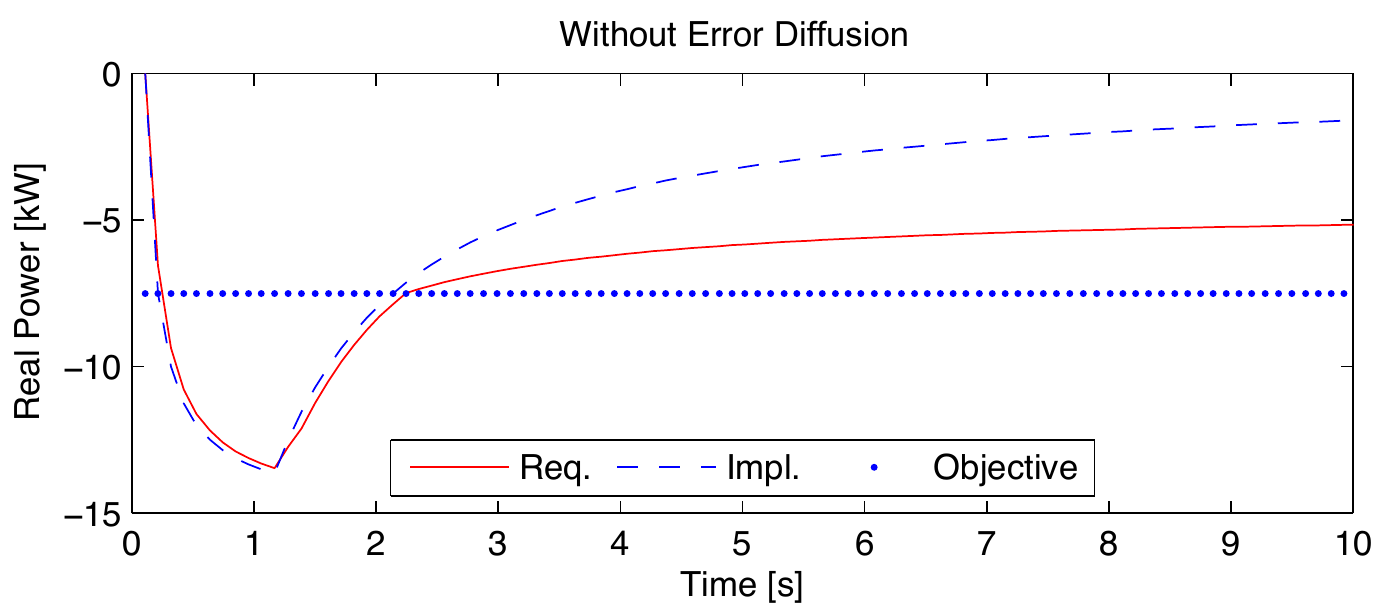}\\
\includegraphics[width=.8\columnwidth]{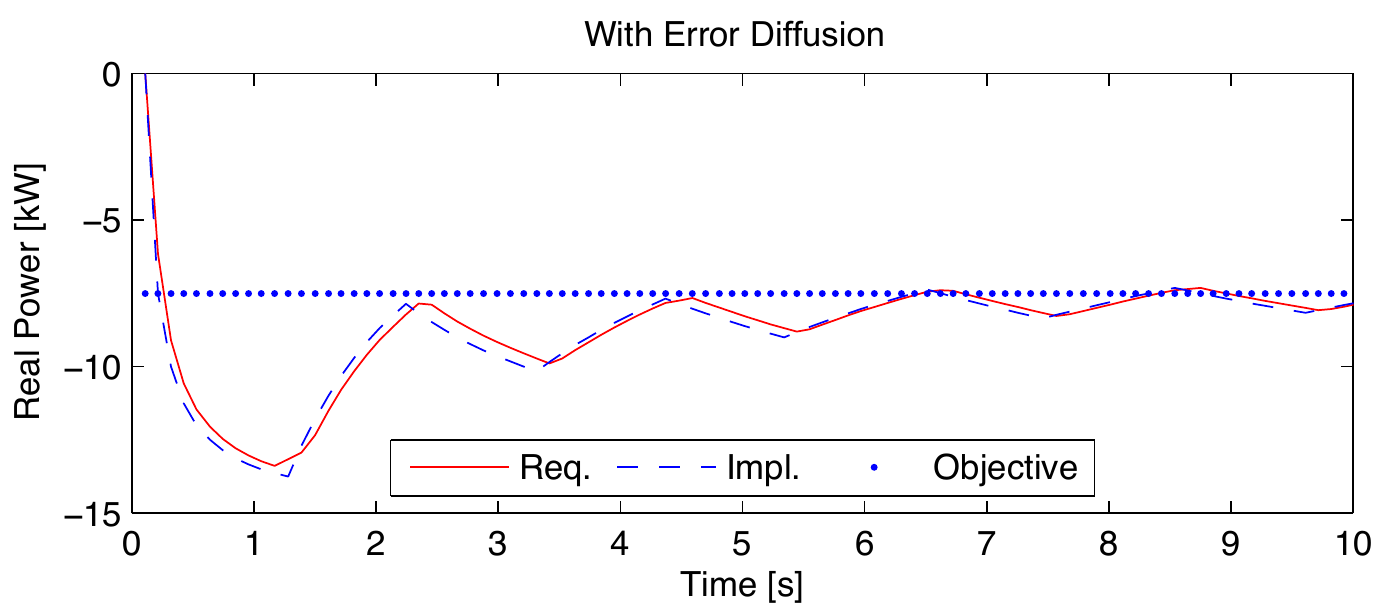}
\caption{Time-averaged sequences of requested and implemented setpoints belonging to the heater agent. In the absence of error diffusion (top figure), the implemented setpoint does not converge to the objective, where the objective is the minimizer of the cost function. With error diffusion (bottom figure), the sequence of implemented setpoint converges towards the objective.}
\label{fig:convergence}
\end{figure}

\begin{figure}
\centering
\includegraphics[width=.8\columnwidth]{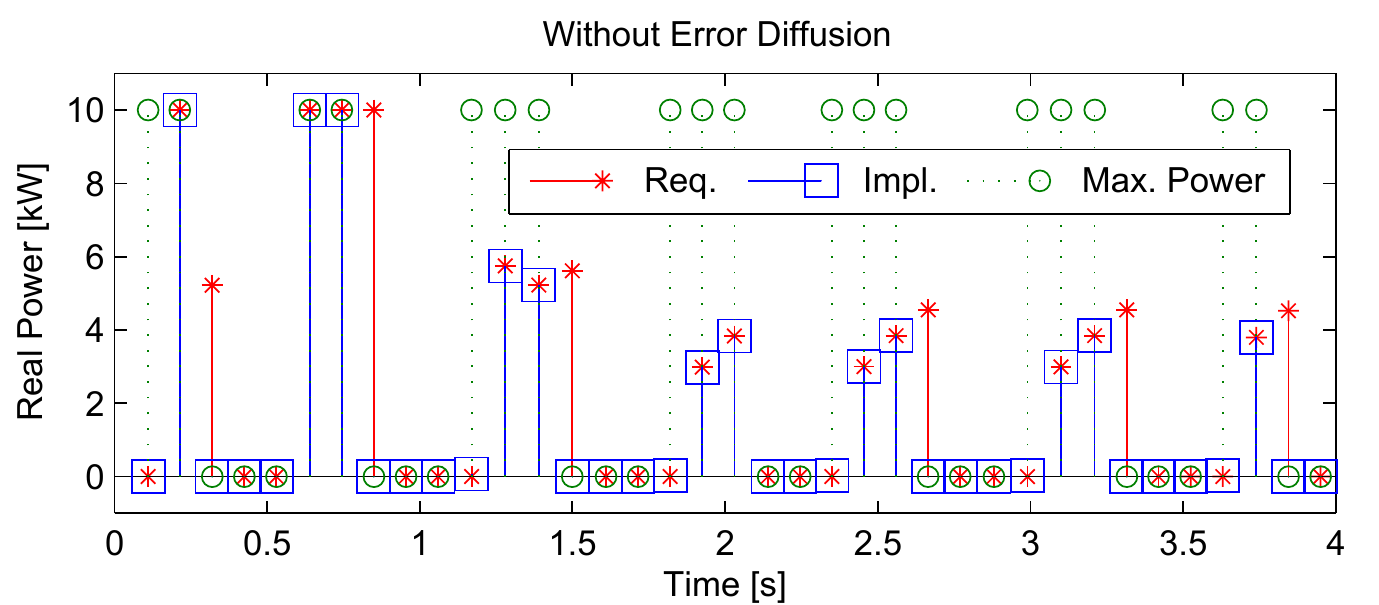}
\includegraphics[width=.8\columnwidth]{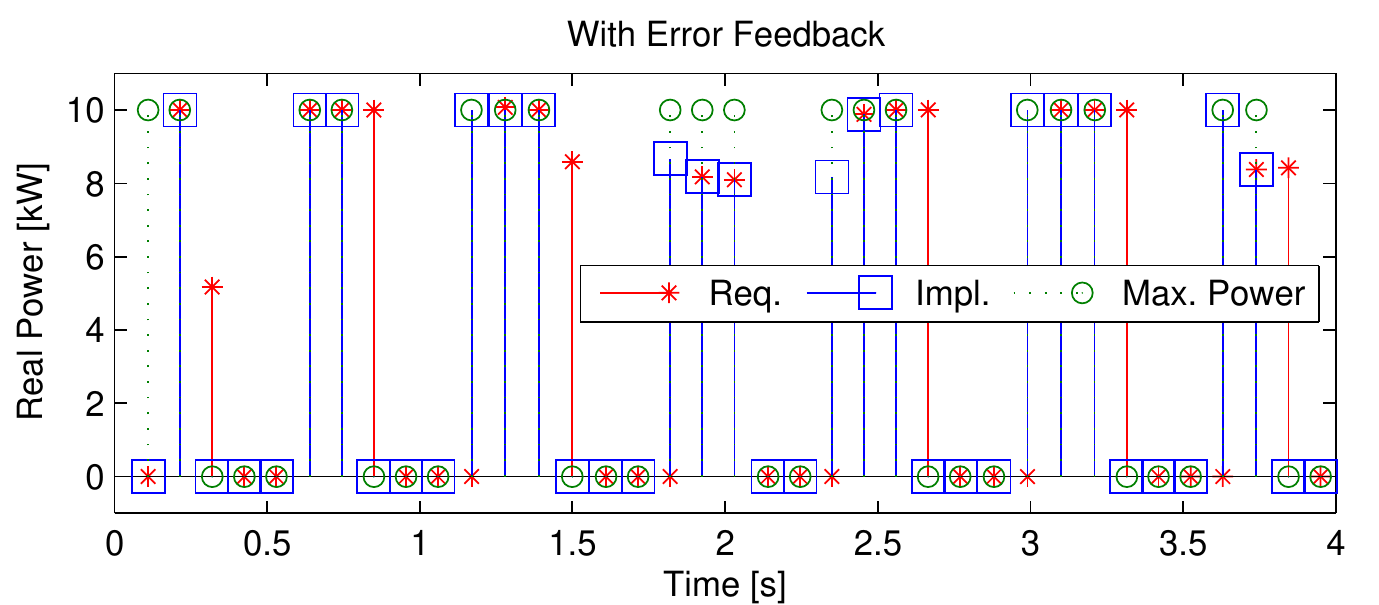}
\caption{Sequence of requested vs. implemented setpoints belonging to the PV agent. The bottom figure shows that error diffusion helps to maximise utilization of the PV.}
\label{fig:utilization}
\end{figure}

\begin{figure}
\centering
\includegraphics[width=.8\columnwidth]{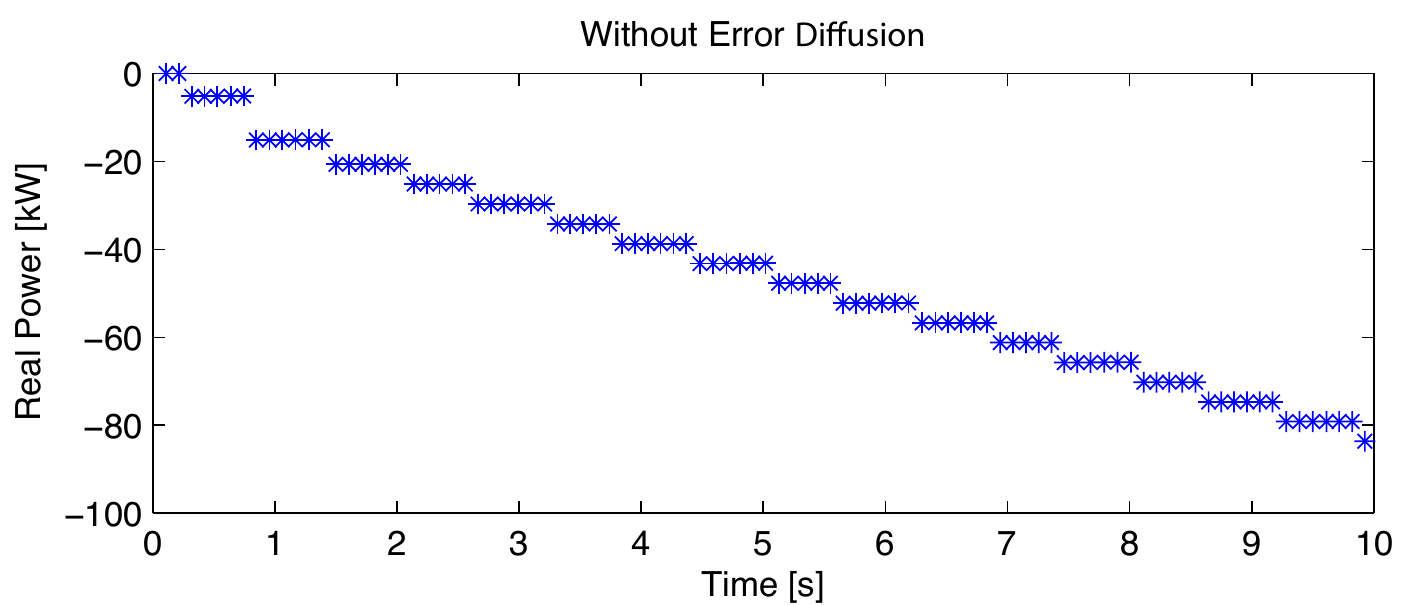}
\includegraphics[width=.8\columnwidth]{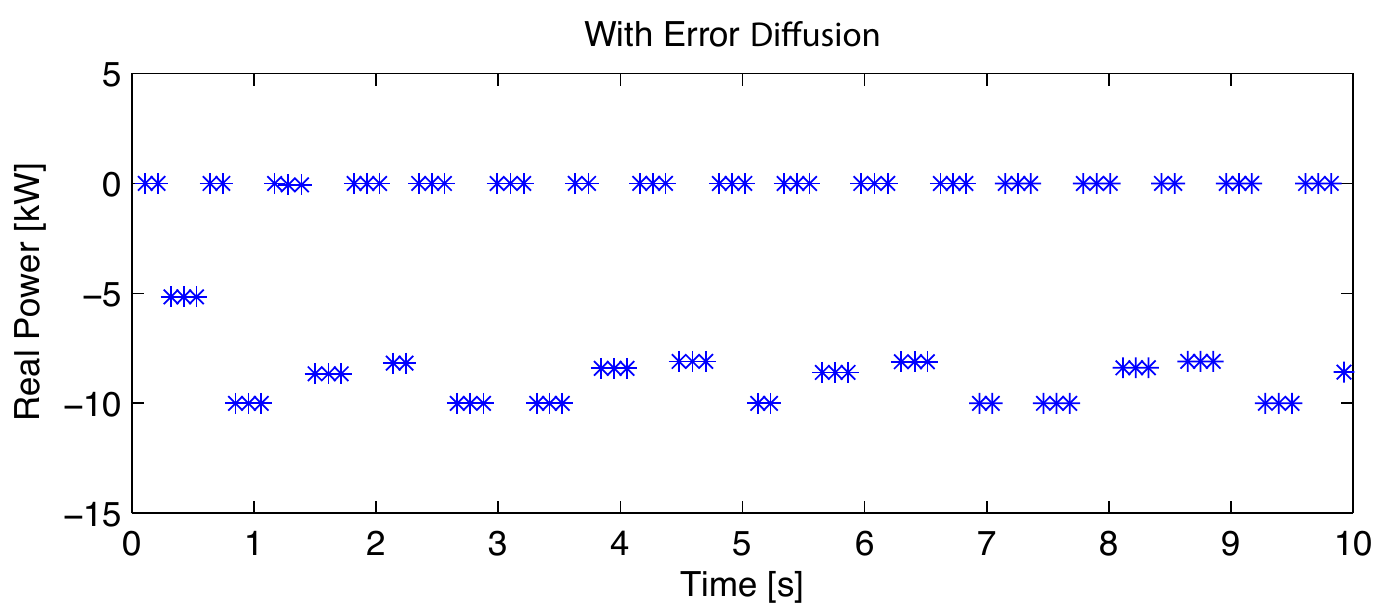}
\caption{Plot of the accumulated error of the PV agent. Like in the heater-agent case, the accumulated error grows unboundedly in the absence of error diffusion (top figure), whereas the accumulated error is bounded with error diffusion.}
\label{fig:accerror_PV}
\end{figure}

\begin{figure}
\centering
\includegraphics[width=.8\columnwidth]{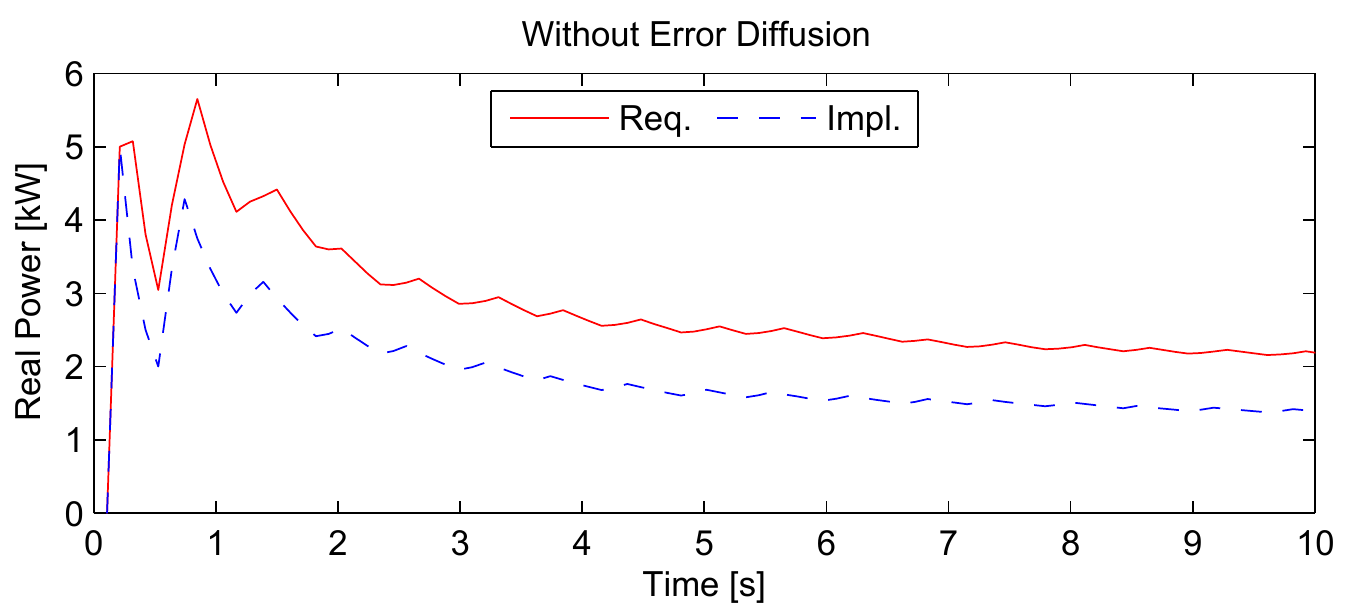}
\includegraphics[width=.8\columnwidth]{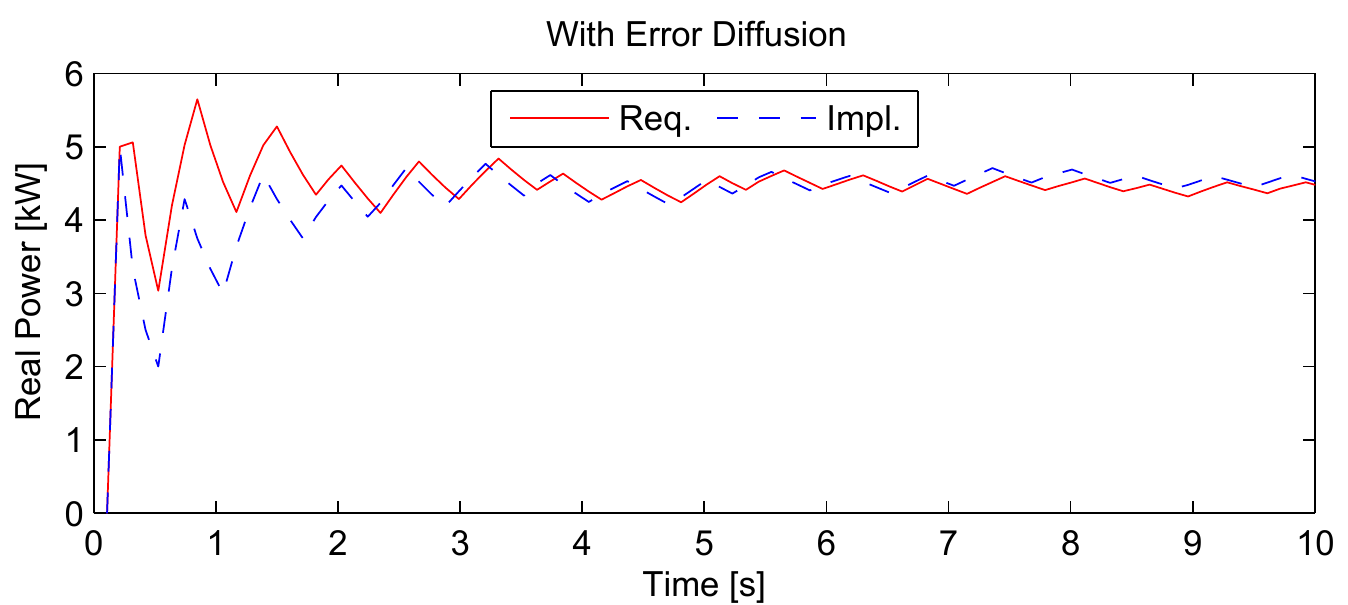}
\caption{Time-averaged sequence of requested vs. implemented setpoints belonging to the PV agent. Also this plot shows that error diffusion helps to increase the  utilization of the PV.}
\label{fig:errorDiffPV}
\end{figure}

\section{Conclusion} \label{sec:conc}
We have introduced a new property, bounded accumulated-error,
and we have shown that, if all resource agents have this property,
the performance of the overall grid-control system improves in several ways.
Hence, we conclude that every resource agent should
have $c$-bounded accumulated-error by design, for some appropriate (and scenario-specific) $c$.

We hope that this theoretically-oriented work
will contribute to the development of a practical and scalable solution
for controlling power grids with a significant fraction of renewable-energy sources.

While the motivation for this work originated from problems that are specific to our application, we believe that our results from \refsec{gen} are general enough to be of independent interest. 
\ifCLASSOPTIONcaptionsoff
  \newpage
\fi



\bibliographystyle{IEEEtran}
\bibliography{IEEEabrv,refs}
\end{document}